\setlist[itemize,description]{leftmargin=*}
\theoremstyle{plain}
    \newtheorem{thm}{Theorem}[section]
    \newtheorem{prop}[thm]  {Proposition}
    \newtheorem{lem}[thm]   {Lemma}
    \newtheorem{cor}[thm]   {Corollary}
    \newtheorem{athm}{Theorem}
    \newtheorem{alem}[athm]{Lemma}
    \newtheorem{aprop}[athm]{Proposition}
\theoremstyle{definition}
    \newtheorem{defn}[thm]  {Definition}
    \newtheorem{nota}[thm]  {Notation}
    \newtheorem{ex}[thm]    {Example}
\newcommand{\bfA}{\mathbf{A}} 
\newcommand{\cA}{\mathcal{A}} 
\newcommand{\C}{\mathbb{C}} 
\newcommand{\fc}{\mathfrak{c}} 
\newcommand{\fe}{\mathfrak{e}} 
\newcommand{\bF}{\mathbb{F}} 
\newcommand{\cJ}{\mathcal{J}} 
\newcommand{\fM}{\mathfrak{M}}
\newcommand{\N}{\mathbb{N}} 
\newcommand{\bQ}{\mathbb{Q}} 
\newcommand{\R}{\mathbb{R}} 
\newcommand{\fR}{\mathfrak{R}} 
\newcommand{\fr}{\mathfrak{r}} 
\newcommand{\fS}{\mathfrak{S}} 
\newcommand{\fu}{\mathfrak{u}} 
\newcommand{\cX}{\mathcal{X}} 
\newcommand{\Z}{\mathbb{Z}} 
\newcommand{\vht}{\mathrm{ht}} 
\newcommand{\tr}{\mathbf{t}} 
\newcommand{\fP}{\mathfrak{P}} 
\DeclareSymbolFontAlphabet{\mathbb}{AMSb}
\DeclareSymbolFontAlphabet{\mathbbl}{bbold}
\newcommand{\one}{\mathbbl{1}} 
\newcommand{\bfone}{\mathbf{1}} 
\newcommand{\tfS}{\tilde{\fS}} 
\newcommand{\geo}{^{\mathrm{geo}}} 
\newcommand{\conj}{\mathrm{conj}} 
\newcommand{\cG}{\mathcal{G}} 
\newcommand{\cK}{\mathcal{K}} 
\newcommand{\cZ}{\mathcal{Z}} 
\newcommand{\Q}{\mathcal{Q}} 
\newcommand{\hQ}{\hat{\Q}} 
\newcommand{\hfR}{\hat{\fR}} 
\newcommand{\PMQ}{\mathbf{PMQ}} 
\newcommand{\PMR}{\mathbf{PMR}} 
\newcommand{\PMon}{\mathbf{PMon}} 
\newcommand{\Mon}{\mathbf{Mon}} 
\newcommand{\MQ}{\mathbf{MQ}} 
\newcommand{\Grp}{\mathbf{Grp}} 
\newcommand{\Mod}{\mathbf{Mod}} 
\newcommand{\PMQGrp}{\mathbf{PMQGrp}} 
\newcommand{\Qnd}{\mathbf{Qnd}} 
\newcommand{\Set}{\mathbf{Set}} 
\newcommand{\XA}{\mathbf{X}\bfA} 
\newcommand{\XSet}{\mathbf{XSet}} 
\newcommand{\XTop}{\mathbf{XTop}} 
\newcommand{\XMod}{\mathbf{XMod}} 
\newcommand{\CH}{\mathrm{Ch}} 
\newcommand{\Alg}{\mathbf{Alg}} 
\newcommand{\FQ}{\bF\bQ} 
\newcommand{\braiding}{\mathfrak{br}} 
\newcommand{\borel}{/\! /} 
\newcommand{\ab}{\mathbf{ab}} 
\newcommand{\Br}{\mathfrak{Br}} 
\newcommand{\gen}{f} 
\newcommand{\fg}{\mathfrak{g}} 
\newcommand{\ufg}{\underline{\fg}} 
\newcommand{\tfg}{\tilde{\fg}} 
\newcommand{\utfg}{\underline{\tfg}} 
\newcommand{\Hur}{\mathrm{Hur}} 
\newcommand{\hur}{\mathrm{hur}} 
\newcommand{\SP}{\mathrm{S}\mathrm{P}} 
\newcommand{\CmP}{\C\setminus P} 
\newcommand{\totmon}{\omega} 
\newcommand{\fQ}{\mathfrak{Q}} 
\newcommand{\us}{\underline{s}} 
\newcommand{\ut}{\underline{t}} 
\newcommand{\ua}{\underline{a}} 
\newcommand{\Arr}{\mathrm{Arr}} 
\newcommand{\NAdm}{\mathrm{NAdm}} 
\newcommand{\ndeg}{\mathrm{ndeg}} 
\newcommand{\hor}{\mathrm{hor}} 
\newcommand{\ver}{\mathrm{ver}} 
\newcommand{\del}{\partial} 
\newcommand{\ub}{\underline{b}} 
\newcommand{\mDelta}{\mathring{\Delta}} 
\newcommand{\pa}[1]{\left(#1\right)}
\newcommand{\abs}[1]{\left|#1\right|}
\newcommand{\set}[1]{\left\{#1\right\}}
\newcommand{\sca}[1]{[\! [#1]\! ]}
\renewcommand{\phi}{\varphi}
\renewcommand{\epsilon}{\varepsilon}
\DeclareMathOperator{\Imm}{Im}
\DeclareMathOperator{\Ker}{ker}
\DeclareMathOperator{\Id}{Id}
\DeclareMathOperator{\Aut}{Aut} 
\DeclareMathOperator{\Ext}{Ext}
\title{Partially multiplicative quandles and simplicial Hurwitz spaces}
\author{Andrea Bianchi}
\thanks{
This work was partially supported by the \emph{Deutsche
  Forschungsgemeinschaft} (DFG, German Research Foundation) under Germany’s
Excellence Strategy (\texttt{EXC-2047/1}, \texttt{390685813}), by the
\emph{European Research Council} under the \emph{European Union's Horizon 2020
research and innovation programme} (grant agreements No. \texttt{716424} and \texttt{772960}), and by the
\emph{Danish National Research Foundation} through the \emph{Copenhagen Centre for
Geometry and Topology} (\texttt{DNRF151}).
}
\email{anbi@math.ku.dk}
\address{Department of Mathematical Sciences, University of Copenhagen \newline
Universitetsparken 5, Copenhagen, 2100, Denmark}  
\date{\today}
\keywords{Quandle, partial monoid, Hurwitz space, bar construction, free group, symmetric group.}
\subjclass[2020]{
08A05, 
08A35, 
18M15, 
20B05, 
20M05, 
55R80. 
}
\begin{document}
\begin{abstract}
We introduce partially multiplicative quandles (PMQ),
a generalisation of both partial monoids and quandles. We set up the basic theory of PMQs, focusing on the properties of free PMQs and complete PMQs. For a PMQ $\mathcal{Q}$ with completion $\hat{\mathcal{Q}}$, we introduce the category of $\hat{\mathcal{Q}}$-crossed topological spaces, and define the Hurwitz space $\mathrm{Hur}^{\Delta}(\mathcal{Q})$: it is a $\hat{\mathcal{Q}}$-crossed space, and it parametrises $\mathcal{Q}$-branched coverings of the plane. The definition recovers classical Hurwitz spaces when $\mathcal{Q}$ is a discrete group $G$.
Finally, we analyse the class of PMQs $\mathfrak{S}_d^{\mathrm{geo}}$ arising from the symmetric groups $\fS_d$, and we compute their enveloping groups and their PMQ completions.
\end{abstract}

\maketitle

\section{Introduction}
The first goal of the article
is to introduce the notion of \emph{partially multiplicative quandle}
(PMQ), which generalises both notions of quandle and of partial monoid:
\begin{itemize}
 \item a \emph{quandle} is a set with a binary operation behaving like  conjugation in a group; quandles were introduced in \cite{Joyce};
 \item a \emph{partial monoid} is a set with a partially defined binary operation behaving like the product in an associative monoid;
 \item a PMQ is a set $\Q$ with both structures simultaneously, satisfying some compatibility conditions (see Definition \ref{defn:PMQ} for more details).
\end{itemize}
We use PMQs to construct Hurwitz spaces in a categorical way. Classical Hurwitz spaces \cite{Hurwitz, Fulton, FriedVolklein, RomagnyWewers, EVW:homstabhur, ETW:shufflealgebras} are defined by fixing an integer $k\ge0$ and a group $G$: an element of $\hur_k(G)$, which we refer to as a \emph{configuration}, is the datum $(P,\phi)$ of a set $P\subset (0,1)^2\subset\C$ of $k$ points in the open unit square, together with a \emph{$G$-valued monodromy} $\phi$, i.e. a group homomorphism $\phi\colon \pi_1(\CmP)\to G$. The topology on $\hur_k(G)$ is usually defined leveraging the topology of configuration spaces, in such a way that the assignment $(P,\phi)\mapsto P$ gives a covering map
$\hur_k(G)\to C_k((0,1)^2)$, with target the configuration space of $k$ unordered points in $(0,1)^2$. The classical Fox-Neuwirth-Fuchs cell stratification on $C_k((0,1)^2)$ \cite{fn, Fuchs} can then be lifted to a cell stratification on $\hur_k(G)$.

In this article we take an opposite point of view: we define the Hurwitz spaces $\hur_k(G)$, and more generally a Hurwitz space $\Hur^{\Delta}(\Q)$ of configurations $(P,\psi)$ with monodromy in a PMQ $\Q$, by first mimicking the cell stratification. Note the absence of the parameter $k$ in the notation ``$\Hur^{\Delta}(\Q)$'':
using the partial product on the PMQ $\Q$, we can make two or more points of $P$ approach and collide in a controlled way, so that the cardinality of the support of a configuration is no longer locally constant on $\Hur^{\Delta}(\Q)$.

The space $\Hur^{\Delta}(\Q)$ is obtained as difference between the geometric realisations of a certain bisimplicial complex $\Arr(\Q)$ and a subcomplex $\NAdm(\Q)$. It turns out that $\Hur^{\Delta}(\Q)$ is a dense open subspace of $|\Arr(\Q)|$, and vice versa $|\Arr(\Q)|$ gives a natural ``bordification'' of $\Hur^{\Delta}(\Q)$; if $\Q$ is normed and finite (Definition \ref{defn:norm}), then $|\Arr(\Q)|$ is a disjoint union of compactifications of the components of $\Hur^{\Delta}(\Q)$. The decoration ``$\Delta$'' reminds us of the simplicial flavour of the construction, and we will also refer to $\Hur^{\Delta}(\Q)$ as a ``simplicial'' Hurwitz space, to distinguish it from a ``classical'' Hurwitz space of the form $\hur_k(G)$ as above, defined as a certain covering space of $C_k((0,1)^2)$.

The connection between the two notions of Hurwitz spaces is apparent if we consider the PMQ $\Q=G\sqcup\set{\one}$ obtained from a discrete group $G$ by adding a disjoint unit $\one$, in which conjugation is induced by group conjugation in $G$, and the partial product is trivial: in this case we have a homeomorphism $\Hur^{\Delta}(\Q)\cong\coprod_{k\ge0}\hur_k(G)$.\footnote{We add a disjoint unit in order to treat the neutral element of $G$ as a non-unit element of $\Q$ and make the homeomorphism hold; note indeed that a configuration in $\hur_k(G)$ is allowed to have some trivial local monodromies.}

Our motivating examples of PMQs are the family $\fS_d\geo$, for $d\ge1$: the PMQ $\fS_d\geo$ is obtained from the symmetric group $\fS_d$ by keeping the group conjugation, and by restricting the group product to certain pairs of permutations, satisfying a ``geodesic'' requirement. In future work \cite{Bianchi:Hur4}, we will establish a connection between the Hurwitz spaces associated with these PMQs, and the moduli spaces of Riemann surfaces with boundary.

\subsection{Statement of results}
We highlight some of the results of the paper.
The paper begins with the basic theory of PMQs, which is in many respects parallel to classical group theory. To introduced the following, which is Proposition \ref{prop:hQ},
we mention that each PMQ $\Q$ admits a \emph{completion} $\hQ$, which is the initial PMQ receiving a map of PMQs from $\Q$ and having a product defined for all pairs of its elements.
\begin{aprop}
\label{prop:main1}
The canonical map $\Q\to\hQ$ from a PMQ to its completion is injective.
\end{aprop}
This is analogous to the classical result stating that a partial monoid $M$ injects into its monoid completion $\hat M$. To appreciate the non-triviality of the result, we note that, instead, the canonical map from $\Q$ to its enveloping group $\cG(\Q)$ need not be injective: here by \emph{enveloping group of $\Q$} we mean the initial group receiving a map of PMQs from $\Q$.

An important notion, analogous to the one given in \cite[Definition 9.1]{Joyce} in the context of quandles,
is the one of PMQ-group pair: roughly speaking, it is a pair $(\Q,G)$ of a PMQ $\Q$ and a group $G$ that are interrelated with each other (see Definition \ref{defn:PMQgrouppair}). Our main results regarding PMQ-group pairs are concerned with the example
of $(\FQ^k_l,\bF^k)$, for $0\le l\le k$: here $\bF^k$ is the free group on $k$ generators $\gen_1,\dots,\gen_k$, and $\FQ^k_l\subset \bF^k$ is the sub-PMQ containing the neutral element and the conjugacy classes of the first $l$ generators $\gen_1,\dots,\gen_l$ of $\bF^k$. 
The following is an informal rephrasing of Theorem \ref{thm:FQfreePMQ}.
\begin{athm}
 \label{thm:main2}
 The PMQ-group pair $(\FQ^k_l,\bF^k)$ is a free object in the category of PMQ-group pairs, on $l$ generators of PMQ-type and $k-l$ generators of group-type.
\end{athm}
To state the next result, let $0\le r\le l\le k$ and consider the Artin action of the braid group $\Br_r$ on the subgroup $\bF^r\subset\bF^k$ spanned by the first $r$ generators $\gen_1,\dots,\gen_r$ of $\bF^k$;
note that the product $\gen_1\dots\gen_r\in\bF^k$ is fixed by this action. Let $\Br_r$ act on the entire $\bF^k$ by acting trivially on the last $k-r$ generators.
The following is a rephrasing of Proposition \ref{prop:standardmove}.
\begin{aprop}
\label{prop:main3}
The Artin action of $\Br_r$ on $\bF^k$ is transitive on the set of decompositions of the element
$\gen_1\dots\gen_r\in\bF^k$ as a product of $r$ elements in $\FQ^k_l$.
\end{aprop}
These last two results will play an important role in future work \cite{Bianchi:Hur2}, where a geometric construction of Hurwitz spaces with monodromies in a PMQ-group pair is developed. This motivates our interest in PMQ-group pairs.

In analogy with the construction of the group ring $R[G]$ of a group $G$, we introduce the PMQ-ring $R[\Q]$ of a PMQ $\Q$ with coefficients in a commutative ring $R$ (Definition \ref{defn:PMQring}). This ring will play an important role in the computation of the stable homology of Hurwitz spaces in future work \cite{Bianchi:Hur3}. The following is Theorem \ref{thm:RQquadratic}, translating properties of a PMQ into properties of the PMQ-ring.
\begin{athm}
 \label{thm:main4}
 Let $\Q$ be a PMQ with the following three properties: it is maximally decomposable (Definition \ref{defn:maxdecomposable}), coconnected (Definition \ref{defn:coconnected}) and pairwise determined (Definition \ref{defn:pairwisedetermined}). Then $R[\Q]$ is a quadratic $R$-algebra.
\end{athm}
The following is Lemma \ref{lem:cominv}: its proof is quite elementary, but the statement is a little surprising, considering that $R[\Q]$ is in general not a commutative ring.
\begin{alem}
 \label{lem:main5}
Let the enveloping group $\cG(\Q)$ act on $R[\Q]$ by conjugation. Then the subring $\cA(\Q)\subseteq R[\Q]$ of conjugation-invariant elements is a commutative ring.
\end{alem}
After the basics of the theory of PMQs are settled, we turn to Hurwitz spaces.
In order to make the definition of simplicial Hurwitz spaces more conceptual, we introduce a categorical framework.
For a complete PMQ $\hQ$ and a category $\bfA$ we introduce the category $\XA(\hQ)$ of $\hQ$-crossed objects in $\bfA$; this notion generalises the classical notion of $G$-crossed objects, for a group $G$.
The following proposition summarizes the discussion of Subsection \ref{subsec:hQcrossed}, leading in particular to Definition \ref{defn:Xbraiding}.
\begin{aprop}
\label{prop:main6}
Let $\bfA$ be a braided closed monoidal category, and let $\hQ$ be a complete PMQ. Then the category $\XA(\hQ)$ of $\hQ$-crossed objects in $\bfA$ is endowed with a braided monoidal structure. 
\end{aprop}
For an \emph{augmented} PMQ $\Q$ (Definition \ref{defn:augmentedPMQ}) with completion $\hQ$, the machinery of $\hQ$-crossed objects is used to define simplicial Hurwitz spaces with monodromies in a PMQ $\Q$. We first introduce a bisimplicial $\hQ$-crossed set $\Arr(\Q)$, containing a sub-bisimplicial $\hQ$-crossed set $\NAdm(\Q)\subseteq\Arr(\Q)$; the $\hQ$-crossed space $\Hur^{\Delta}(\Q)$ is then defined as the difference between the geometric realisations of $\Arr(\Q)$ and $\NAdm(\Q)$.
The following is Theorem \ref{thm:Hurpi0}, classifying components of $\Hur^{\Delta}(\Q)$.
\begin{athm}
 \label{thm:main7}
Let $\Q$ be an augmented PMQ with completion $\hQ$. Then the unique map of $\hQ$-crossed spaces $\totmon\colon\Hur^{\Delta}(\Q)\to\hQ$ induces a bijection on connected components.
\end{athm}

As an application of simplicial Hurwitz spaces, we consider the case in which $G$ is a group and $\Q:=G\sqcup\set{\one}$, considered as a PMQ with trivial product, unit $\one$ and conjugation extending group conjugation on $G$.
The following is Theorem \ref{thm:HurwitzEVW}.
\begin{athm}
 \label{thm:main8}
 Let $\Q=G\sqcup\set{\one}$ as above; then
 $\Hur^{\Delta}(\Q)\cong\coprod_{k\ge0}\hur_k(G)$.
\end{athm}
In particular, when $\Q=G\sqcup\set{\one}$ as in the previous theorem and $G$ is finite, then $|\Arr(G)|$ splits as a disjoint union of compact spaces $|\Arr(G)_k|$, each containing $\hur_k(G)$ as an open dense subspace: vice versa, this provides $\hur_k(G)$ with a natural compactification $|\Arr(G)_k|$. Moreover, combining Theorems \ref{thm:main7} and \ref{thm:main8}, we obtain a bijection between $\pi_0(\coprod_{k\ge0}\hur_k(G))$ and the completion of the PMQ $G\sqcup\set{\one}$.

Besides the difference space $\Hur^{\Delta}(\Q)=|\Arr(\Q)|\setminus|\NAdm(\Q)|$, we can study the couple of spaces $(|\Arr(\Q)|,|\NAdm(\Q)|)$. In the case in which $\Q$ is \emph{Poincar\'e} (see Definition \ref{defn:Poincare}), the relative homology of this pair agrees with the cohomology of $\Hur^{\Delta}(\Q)$: this justifies our interest in the relative homology of the couple $(|\Arr(\Q)|,|\NAdm(\Q)|)$. The following is Theorem \ref{thm:redchain}.
\begin{athm}
 \label{thm:main9}
 Let $R$ be a commutative ring and $\Q$ be an augmented PMQ. Then the cellular chain complex $\CH_*(|\Arr(\Q)|,|\NAdm(\Q)|;R)$ is isomorphic to the reduced total chain complex associated with the double bar construction $B_{\bullet,\bullet}(R[\Q],R,\epsilon_{\Q})$.
\end{athm}

As already mentioned, our main motivation to introduce PMQs and to generalise Hurwitz spaces comes from the family of PMQs denoted $\fS_d\geo$, for $d\ge1$. The following theorem combines Lemma \ref{lem:cGfSdgeo} and Proposition \ref{prop:hatfSdgeo}.
\begin{athm}
 \label{thm:main10}
Let $d\ge2$. The enveloping group of $\fS_d\geo$ coincides with the index 2 subgroup of $\Z\times\fS_d$ of pairs $(r,\sigma)$ such that $r$ and $\sigma$ have the same parity.

The PMQ completion $\widehat{\fS_d\geo}$ of $\fS_d\geo$ is the set of all sequences
\[
(\sigma;\fP_1,\dots,\fP_\ell;r_1,\dots,r_\ell),
\]
consisting of a permutation $\sigma\in\fS_d$, an unordered partition $\fP_1,\dots,\fP_\ell$ of the set $\set{1,\dots,d}$, and a system of weights $r_1,\dots,r_\ell\ge0$ on the pieces of the partition, satisfying the following properties:
\begin{enumerate}
\item $\sigma$ preserves each piece $\fP_j$ of the partition;
\item $r_j+1-|\fP_j|$ is greater or equal to the number of cycles of the restricted permutation $\sigma|_{\fP_j}\in\fS_{\fP_j}$, for all $1\le j\le \ell$;
\item $r_j$ has the same parity as the restricted permutation $\sigma|_{\fP_j}\in\fS_{\fP_j}$, for all $1\le j\le\ell$.
\end{enumerate}
\end{athm}
Lastly, we highlight the following, which is Proposition \ref{prop:fSgeoKoszul}.
\begin{aprop}
 \label{prop:main11}
 Let $R$ be a commutative ring and $d\ge1$. The quadratic $R$-algebra $R[\fS_d\geo]$ is Koszul.
\end{aprop}

\subsection{Outline of the article}
In Section \ref{sec:pmqbasic} we introduce partially multiplicative quandles (PMQs) and the notions of PMQ-group pair, completion of a PMQ, and enveloping group of a PMQ. We prove Proposition \ref{prop:main1}.

In Section \ref{sec:freegroups} we study the PMQs $\FQ^k_l$ and their relation to the free groups $\bF^k$;
we prove Theorem \ref{thm:main2} and Proposition \ref{prop:main3}.

In Section \ref{sec:normedPMQ} we introduce the notion of \emph{norm} on a PMQ, and study several combinatorial properties
that a PMQ can enjoy. We also introduce the PMQ-ring $R[\Q]$ associated with a PMQ $\Q$. We study how properties of the PMQ $\Q$ are reflected into properties of the ring $R[\Q]$, and in particular we prove Theorem \ref{thm:main4} and Lemma \ref{lem:main5}.

In Section \ref{sec:barconstructions} we describe, for a braided monoidal category
$\bfA$, a procedure taking as input a morphism $f\colon A\to B$ of commutative algebras in $\bfA$,
and giving as output a bisimplicial object in $\bfA$, denoted $B_{\bullet,\bullet}(A,B,f)$ and called the
\emph{double bar construction}. The material of this section is largely standard and is given in detail only to make the article self-contained.

In Section \ref{sec:simplhur} we introduce, for a category $\bfA$ and a complete PMQ $\hQ$, the category of $\hQ$-crossed objects in $\bfA$, and prove Proposition \ref{prop:main6}. Given an \emph{augmented} PMQ $\Q$ with completion $\hQ$, we define the bisimplicial $\hQ$-crossed set $\Arr(\Q)$, its sub-bisimplicial $\hQ$-crossed set $\NAdm(\Q)$, and the simplicial Hurwitz spaces $\Hur^{\Delta}(\Q)$, which is a $\hQ$-crossed space.
We prove Theorems \ref{thm:main7}, \ref{thm:main8} and \ref{thm:main9}.

In Section \ref{sec:fSgeo} we give a detailed analysis of the family of PMQs $\fS_d\geo$. We prove Theorem \ref{thm:main10} and Proposition \ref{prop:main11}. The results of this section are partially contained in \cite[Subsection 8.1.3]{BianchiPhD}.

Finally, in the Appendix \ref{sec:racks}, we discuss how the theory changes if we relax the notion of PMQ to the notion of \emph{partially multiplicative rack}:
in particular we explain the failure of Theorem \ref{thm:FQfreePMQ} in the context of partially multiplicative racks, thus motivating our focus
on the more restrictive notion of PMQ.

\subsection{Motivation}
This is the first article in a series about Hurwitz spaces.
The scope we have in mind, for this and the following articles, is to define and study generalised Hurwitz spaces $\Hur(\cX;\Q)$ of finite configurations of points in a subspace $\cX\subset\C$ endowed with monodromies in a PMQ $\Q$.

In this article we achieve a first, \emph{simplicial} definition of a generalised Hurwitz space, the space $\Hur^{\Delta}(\Q)$; this space comes with a stratification by open cells
which reminds of the Fox-Neuwirth-Fuchs stratification of the classical configuration spaces $C_k(\C)$ of the plane; we remark that a similar cell stratification
has been used in \cite{ETW:shufflealgebras} in the study of classical Hurwitz spaces.

The simplicial definition of Hurwitz spaces is, in a certain sense, not
\emph{coordinate-free}, in the sense that it uses dramatically the two standard, Euclidean coordinates of the plane $\C$.
One of the main achievements of the second article of the series \cite{Bianchi:Hur2} will be a coordinate-free definition of
the generalised Hurwitz spaces, allowing for more flexible manipulations. The results of this article will give the algebraic input for the second article.

The third article in the series \cite{Bianchi:Hur3} will use the algebraic input of the first article, together with the topological input of the second, to study Hurwitz spaces as topological monoids, and compute their deloopings.

Finally, the fourth article in the series \cite{Bianchi:Hur4} will apply the entire machinery of generalised Hurwitz spaces to the study of moduli spaces $\fM_{g,n}$ of Riemann surfaces of genus $g$ with $n\ge1$ ordered and parametrised boundary curves: the PMQs $\fS_d\geo$ will play a prominent role and motivate the very definition of PMQs, but we prefer to set up the theory more generally to allow other applications and to make the exposition more conceptual.

The attempt to generalise Hurwitz spaces to PMQs should also be seen as an attempt to unify two classical notions in topology:
classical Hurwitz spaces, and configuration spaces with summable labels. Both notions deal with \emph{decorated} configurations of points in a background
space $\cX$. The \emph{decoration} of a configuration $P\subset\cX$ is:
\begin{itemize}
 \item for classical Hurwitz spaces $\hur_k(G)$, a monodromy with values in a group $G$ and defined on certain loops in $\CmP$ (here $\cX$ is taken to be $\C$ or a subspace of it, e.g. the square $(0,1)^2$);
 \item a labeling with values in an abelian group $G$ (or more generally in a partial abelian monoid $M$) and defined on the points of $P$,
 for configuration spaces with summable labels (here $\cX$ can be any topological space).
\end{itemize}
Neither of these two classical notions is more general than the other; in particular there are two aspects under which the two notions can be compared:
\begin{itemize}
 \item a commutativity property is required for the labels of a configuration space with summable
 labels (i.e. the group or partial monoid must be abelian); differently, classical Hurwitz spaces can take monodromies in a \emph{non-abelian} group (or, more generally, a non-abelian quandle or even in a rack);
 \item collisions between points of a configuration are not allowed in the classical setting of Hurwitz
 spaces; differently, collisions between points of a configuration with summable labels is allowed whenever one can replace the old labels by their sum.
\end{itemize}
The ``intersection'' of these two classical notions only contains configuration spaces of points in $\C$ with labels in a set $S$.

\subsection{A brief history of Hurwitz spaces}
The notion of Hurwitz spaces goes back to Clebsch \cite{Clebsch} and Hurwitz \cite{Hurwitz}. For
a fixed $k\geq 0$ and a discrete group $G$, the classical Hurwitz space $\hur_k(G)$
contains configurations of the form $(P,\phi)$, where
\begin{itemize}
 \item $P=\set{z_1,\dots,z_k}$ is a collection of $k$ distinct points in $\C$;
 \item $\phi\colon\pi_1(\CmP)\to G$ is a group homomorphism.
\end{itemize}
A homotopy theoretic characterisation, which can be found in \cite[Subsection 1.3]{EVW:homstabhur}
and \cite[Section 4]{ORW:Hurwitz}, is the following. There is a natural
action of the braid group $\Br_k$ on the set $G^k$: the standard generator $\mathfrak{b}_i$,
for $1\leq i\leq k-1$, sends the $k$-tuple $(g_1,\dots,g_k)$ to the $k$-tuple
$ (\,g_1\,,\,\dots\,,\,g_{i-1}\,,\,g_{i+1}\,,\, g_{i+1}^{-1}g_ig_{i+1}\,,\,g_{i+2}\,,\,\dots,g_k\,)$.
The classical Hurwitz space $\hur_k(G)$ is then homotopy equivalent to the homotopy quotient $G^k\borel\Br_k$.
This homotopic definition of Hurwitz spaces admits a straightforward extension to the case in which the group $G$ is replaced by a \emph{rack}. Recall
that a rack is a set $\fR$ with a binary operation $\fR\times \fR\to \fR$,
$(a,b)\mapsto a^b$, satisfying the relation $(a^b)^c=(a^c)^{b^c}$ for all $a,b,c\in \fR$; then $\Br_k$ acts on $\fR^k$, by letting
$\mathfrak{b}_i\in \Br_k$ send $(g_1,\dots,g_k)$
to $(g_1,\dots g_{i-1}, g_{i+1}, g_i^{g_{i+1}},g_{i+2},\dots,g_k)$.
The most familiar example of rack is a group $G$, and the reader
will note that, according to this second description, the homotopy type of $\hur_k(G)$ only depends on the underlying structure of rack that a group $G$ has, and not, for instance, on the product of $G$.

The notion of \emph{quandle} is slightly more restrictive than the notion of rack:
a rack $\fR$ is a quandle if $a^a=a$ for all $a\in \fR$. See Definition \ref{defn:quandle} for more details,
\cite{FennRourke} for a classical account on the history of these notions, and the recent preprints
\cite{DehnQuandles} and \cite{FreeRacks} for an updated account.

Classical Hurwitz spaces have been shown to admit a structure of algebraic variety, and have been employed
to study the geometry of the moduli space of curves $\fM_g$ in different characteristics (see \cite{RomagnyWewers}
for an account on the history of applications of Hurwitz spaces in algebraic geometry).
More recently, Hurwitz spaces have been employed
as a topological tool to obtain results about the Cohen-Lenstra heuristics and Malle's conjecture over function fields
\cite{EVW:homstabhur, ETW:shufflealgebras, ORW:Hurwitz}.

Finally, a version of Hurwitz spaces with coefficients in a \emph{space} has been introduced
in \cite{EVW:homstabhurII}, and further investigated in \cite{PalmerTillmann} and \cite{PalmerTillmannbis}.

\subsection{A brief history of configuration spaces with summable labels}
Another classical notion is that of configuration space with summable labels $C(\cX;M)$,
depending on a topological space $\cX$ and on a partial abelian monoid $M$.

This notion was originally considered by Dold and Thom \cite{DT56} in the special case in which $M$ is an abelian group, under the
familiar name of \emph{symmetric product}; later McCord \cite{McCord69} considered the case of an abelian monoid, Kallel \cite{Kallel01} the case of a partial abelian monoid,
and Salvatore \cite{Salvatore01} the case of a partial $E_n$-algebra, in the assumption that $\cX$ is a framed $n$-manifold.
Similar, classical constructions occur in \cite{Segal73} and \cite{McDuff75}.

Configuration spaces with labels in a \emph{space} (without a partial abelian structure)
also appear in the literature \cite{May, Snaith, Boedigheimer87}.

\subsection{Acknowledgments}
This series of articles is a generalisation
and a further development of my PhD thesis \cite{BianchiPhD}. I am grateful to
my PhD supervisor
Carl-Friedrich B\"odigheimer,
Bastiaan Cnossen,
Florian Kranhold,
Luigi Pagano,
Oscar Randal-Williams,
Lukas Woike,
Nathalie Wahl
and the anonymous referees of this article and of \cite{Bianchi:Hur4}
for helpful comments, mathematical explanations and reference suggestions
related to this article.

\tableofcontents
\part{Algebraic theory of partially multiplicative quandles}
\section{Partially multiplicative quandles and their relation to groups}
\label{sec:pmqbasic}
We usually denote by $G$ a discrete group, with neutral element $\one=\one_G$.
\subsection{Basic definitions and first examples}
\label{subsec:basicsPMQ}
\begin{defn}
 \label{defn:quandle}
A \emph{quandle (with unit)} is a set $\Q$ with a marked element $\one\in\Q$, called \emph{unit},
and a binary operation $\Q\times \Q\to \Q$, denoted $(a,b)\mapsto a^b$
and called \emph{conjugation}, such that:
\begin{enumerate}
 \item for all $a\in\Q$ the map $(-)^a\colon \Q\to\Q$ is bijective;
 \item for all $a\in\Q$ we have $\one^a=\one$ and $a^{\one}=a$;
 \item for all $a\in\Q$ we have $a^a=a$;
 \item for all $a,b,c\in \Q$ we have $(a^b)^c=(a^c)^{(b^c)}$.
\end{enumerate}
We denote by $(-)^{a^{-1}}\colon\Q\to\Q$ the inverse map of $(-)^a\colon\Q\to\Q$.

The \emph{conjugacy class} of an element $a\in\Q$, denoted by $\conj(a)$, is the smallest subset $S\subset\Q$
which contains $a$ and is closed under the operations $(-)^b$ and $(-)^{b^{-1}}$ for all $b\in\Q$.
We denote by $\conj(\Q)$ the set of conjugacy classes of $\Q$.

A quandle $\Q$ is \emph{abelian},
if $(-)^c$ is the identity of $\Q$ for all $c\in \Q$.

A morphism of quandles is a morphism of the underlying sets that preserves unit and conjugation.
Quandles form a category $\Qnd$.
\end{defn}
Note that for all $c$ in a quandle $\Q$, the map $(-)^c\colon \Q\to \Q$ is automatically
an automorphism of $\Q$ as a quandle.

The usual definition of ``quandle'' in the literature differs from Definition \ref{defn:quandle}
in that no unit $\one\in\Q$ is required, and condition (2) is dropped.
Note however that if $\Q$ is a quandle without unit, then the set $\Q\sqcup\set{\one}$ can be given a unique
structure of quandle, such that $\one$ is the unit and the inclusion $\Q\subset\Q\sqcup\set{\one}$
preserves conjugation. Throughout the article we will use the word ``quandle'' in the sense of Definition \ref{defn:quandle}.
If instead we drop condition (3) from Definition \ref{defn:quandle}, we obtain
the classical definition of \emph{rack} (with unit).
In the Appendix \ref{sec:racks}
we will briefly discuss the possibility to extend the results of this article to the generality
of racks, and we will describe what difficulties arise.

\begin{ex}
 \label{ex:QinGconjinvquandle}
Let $G$ be a group with unit $\one$, and let $\one\in\Q\subseteq G$ be a conjugation
invariant subset. Then $\Q$ is a quandle by setting $a^b:=b^{-1}ab$, for all $a,b\in \Q$.
\end{ex}

Note that an abelian quandle only contains the information of its underlying pointed set: more precisely, there is a fully faithful functor
$\Set_*\to\Qnd$ with essential image given by abelian quandles.

\begin{defn}
 \label{defn:partialmonoid}
 A \emph{partial monoid} $M$ is a set $M$ with a marked element $\one\in M$, called \emph{unit}, a subset $D\subseteq M\times M$
 and a map $D\to M$, denoted $(a,b)\mapsto ab$ and called \emph{partial product}. We say that \emph{the product $ab$ is defined}
 if $(a,b)\in D$. The following properties must hold:
 \begin{enumerate}
  \item for all $a\in M$ both $\one a$ and $a\one$ are defined and equal to $a$;
  \item for all $a,b,c\in M$, each of the following conditions is satisfied if and only if the other is satisfied:
  \begin{itemize}
   \item $ab$ is defined and $(ab)c$ is defined;
   \item $bc$ is defined and $a(bc)$ is defined.
  \end{itemize}
  Moreover, whenever both conditions are satisfied, we further have $(ab)c=a(bc)$.
 \end{enumerate} 
 A partial monoid $M$ is \emph{abelian} if
 for all $a,b\in M$ either of the following holds:
 \begin{itemize}
  \item both products $ab$ and $ba$ are not defined;
  \item both products $ab$ and $ba$ are defined, and $ab=ba$. 
 \end{itemize}
 A partial monoid $M$ has \emph{trivial} product if for all $a,b\in M$
 the product $ab$ is defined if and only if at least one between $a$ and $b$ is equal to $\one$.
 
 A morphism of partial monoids $M\to M'$ is a map of the underlying sets sending $\one_{M}\mapsto \one_{M'}$
 and the following holds: whenever $a\mapsto a'$,
 $b\mapsto b'$ and $ab$ is defined in $M$, then $a'b'$ is defined in $M'$ and $ab\mapsto a'b'$.
\end{defn}
We can amalgamate Definitions \ref{defn:quandle} and \ref{defn:partialmonoid} into the following one.
\begin{defn}
 \label{defn:PMQ}
 A partially multiplicative quandle (PMQ) is a set $\Q$ with a marked element $\one\in\Q$, called \emph{unit},
 such that $\Q$ is both a quandle and a partial monoid, the unit is $\one$ in both cases and
 for all $a,b,c\in \Q$ the following equalities hold:
 \begin{enumerate}
  \item $ab$ is defined if and only if $b(a^b)$ is defined, and whenever both $ab$ and $b(a^b)$ are defined we have $ab=b(a^b)$; we usually write $ba^b$ for $b(a^b)$;
  \item $a^{(bc)}=(a^b)^c$, whenever the product $bc$ is defined;
  \item $ab$ is defined if and only if $(a^c)(b^c)$ is defined, and whenever both $ab$ and $(a^c)(b^c)$ are defined
  we have $(ab)^c=(a^c)(b^c)$.
 \end{enumerate}
 A PMQ $\Q$ is abelian if the underlying quandle is abelian: condition (1) implies
 that also the underlying partial monoid of $\Q$ is abelian.
 A PMQ has \emph{trivial} product 
 if the underlying partial monoid has trivial product.
 
 A morphism of PMQs is a map of sets that is both a morphism of quandles and of
 partial monoids; the category of PMQs is denoted $\PMQ$.
\end{defn}

\begin{ex}
\label{ex:group}
 Let $G$ be a group; then $G$ is a PMQ by setting $a^b=b^{-1}ab$ for all $a,b\in G$ and by using the group product, defined on the entire $G\times G$; the unit is $\one\in G$.
 This construction defines a forgetful functor $\Grp\to\PMQ$ from the category of groups to the category of PMQs.
\end{ex}

\begin{ex}
\label{ex:quandle}
Let $\Q$ be a quandle; then $\Q$ is a PMQ with trivial product. This construction
 defines a functor $\Qnd\to\PMQ$, which is the left adjoint to the forgetful functor $\PMQ\to\Qnd$ forgetting the partial product; in particular this construction makes every pointed set $S$ into an abelian PMQ with trivial product.
\end{ex}

\begin{ex}
\label{ex:PAM}
Let $M$ be a partial \emph{abelian} monoid; then $M$ is an abelian PMQ by setting $a^b=a$ for all $a,b\in M$; this construction gives an equivalence between the category of abelian PMQs and the category of partial abelian monoids.
\end{ex}

The following definition gives a method to obtain PMQs as subsets of groups.
\begin{defn}
 \label{defn:fullsubPMQ}
 Let $G$ be a group and let $\one\in S\subseteq G$ be a conjugation invariant subset of $G$ satisfying the following
 property: for all $1\le i<j\le r$, if $a_1,\dots,a_r$ are elements in $S$ and if the product $a_1\dots a_r$ lies in $S$,
 then also the product $a_i\dots a_j$ lies in $S$ (this property is to ensure condition (2) from Definition \ref{defn:partialmonoid} after we define the partial product on $S$).
 Then $S$ \emph{inherits from $G$} a structure of PMQ as follows:
 \begin{itemize}
  \item the unit is $\one$;
  \item the conjugation is defined as in $G$; 
  \item given two elements $a,b\in S$, if their product $ab\in G$ lies in $S$, then we declare $ab$ to be
  also their product in $S$ as PMQ; otherwise the product $ab$ is not defined.
 \end{itemize}
\end{defn}
Note that if $S\subseteq G$ inherits from $G$ the structure of PMQ, then the inclusion
$S\hookrightarrow G$ is a map of PMQs, where $G$ is a PMQ as in Example \ref{ex:group}.

\subsection{Enveloping group of a PMQ}
Conversely as in Example \ref{ex:group}, we can construct a group from a PMQ as follows.
\begin{defn}
 \label{defn:cGQ}
 Let $\Q$ be a PMQ. We define its \emph{enveloping group} $\cG(\Q)$ as the group with the following presentation.
 
 \begin{tabular}{ll}
 \textbf{Generators} & For all $a\in \Q$ there is a generator $[a]$.\\
 \textbf{Relations} & $\bullet$ $[b]^{-1}[a][b]=[a^b]$ for all $a,b\in \Q$.\\
  & $\bullet$ $[a][b]=[ab]$ for all $a,b\in \Q$ such that $ab$ is defined in $\Q$.
\end{tabular}

The assignment $\Q\mapsto \cG(\Q)$ gives the left adjoint of the forgetful functor $\Grp\to\PMQ$ from Example \ref{ex:group}.
We denote by $\eta=\eta_{\Q}\colon \Q\to \cG(\Q)$ the unit of the adjunction: it is the map of PMQs defined by $a\mapsto[a]$
for all $a\in \Q$.
\end{defn}
In general $\eta$ is not injective, as we see in the following (compare also with \cite[Section 6]{Joyce}).
\begin{defn}
 \label{defn:GltimesS}
 Let $G$ be a group acting on right on a set $S$. We define a PMQ $G\ltimes S$. The underlying set is $\set{\one}\sqcup G\sqcup S$; the neutral element is $\one$; the partial product $ab$ is only defined if $a$ or $b$ is $\one$ (in which case its value is determined by $\one$ being the unit), or if both $a,b\in G$; the quandle structure is given as follows:
 \begin{itemize}
  \item  $a^s=a$ for all $a\in G\ltimes S$ and $s\in S$;
  \item $h^g=g^{-1}hg$ for all $g,h\in G$;
  \item $s^g=s\cdot g$ for all $g\in G$ and $s\in S$;
  \item $\one^a=\one$ and $a^\one=a$ for all $a\in G\ltimes S$.
 \end{itemize}
\end{defn}
Note that for $g\in G$ and $s\in S$ the following equalities hold in $\cG(G\ltimes S)$:
\begin{itemize}
 \item $[g]^{-1}[s][g]=[s\cdot g]$
 \item $[s]^{-1}[g][s]=[g]$.
\end{itemize}
Putting them together one obtains the equality $[s]=[s\cdot g]\in\cG(G\ltimes S)$. Hence the map
$\eta\colon G\ltimes S\to\cG(G\ltimes S)$ identifies the elements $s$ and $s\cdot g$ of $S$,
and is not injective unless $G$ acts trivially on $S$.

\subsection{Adjoint action and PMQ-group pairs}
\label{subsec:adjointaction}
\begin{nota}
 \label{nota:AutPMQ}
For a PMQ $\Q$ we denote by $\Aut_{\PMQ}(\Q)$ the group of automorphisms of $\Q$ as a PMQ;
we use the classical convention that automorphisms, as functions in general, act on left.
We denote by $\Aut_{\PMQ}(\Q)^{op}$ the opposite group, whose elements are still those functions of sets $\Q\to\Q$ that are automorphisms of PMQs (and such functions can be evaluated on elements of $\Q$), but whose composition is reversed. 
\end{nota}
By definition of PMQ there is a map of PMQs $\Q\to \Aut_{\PMQ}(\Q)^{op}$ given by $a\mapsto (-)^a$. This map gives rise to a homomorphism of groups $\rho\colon\cG(\Q)\to\Aut_{\PMQ}(\Q)^{op}$, i.e. to a right action of $\cG(\Q)$ on $\Q$: we call this the \emph{adjoint
action}. Note that the map $\eta\colon \Q\to\cG(\Q)$ (see Definition \ref{defn:cGQ})
is $\cG(\Q)$-equivariant if we consider the right action of $\cG(\Q)$ on itself by conjugation.
\begin{nota}
 \label{nota:cK}
 We denote by $\cK(\Q)\subseteq\cG(\Q)$ the kernel $\Ker(\rho)$.
\end{nota}
\begin{lem}
 \label{lem:cKcentercG}
The subgroup $\cK(\Q)$ is contained in the centre of $\cG(\Q)$.
\end{lem}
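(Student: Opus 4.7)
The plan is to show directly that every $x\in\cK(\Q)$ commutes with every generator $[a]$ of $\cG(\Q)$, $a\in\Q$; since these generate $\cG(\Q)$, this will place $x$ in the centre.

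The first step is to upgrade the defining relation $[b]^{-1}[a][b]=[a^b]$ to arbitrary elements of $\cG(\Q)$. More precisely, writing $a\cdot g:=\rho(g)(a)$ for the right action of $g\in\cG(\Q)$ on $a\in\Q$, I claim
\[
 g^{-1}[a]g=[a\cdot g]\qquad\text{for all }a\in\Q,\ g\in\cG(\Q).
\]
For $g=[b]$ this is the defining relation, which rewrites as $[b]^{-1}[a][b]=[a\cdot[b]]$. For $g=[b]^{-1}$, applying the same defining relation with $a$ replaced by $a^{b^{-1}}=a\cdot[b]^{-1}$ gives $[b]^{-1}[a\cdot[b]^{-1}][b]=[a]$, hence $[b][a][b]^{-1}=[a\cdot[b]^{-1}]$, as required. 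The general case then follows by induction on the length of a word in the generators $[b]^{\pm1}$, using that $\rho$ is a (right) action.

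Now let $x\in\cK(\Q)=\Ker(\rho)$. Then $a\cdot x=a$ for every $a\in\Q$, and the identity from the first step specialises to
\[
 x^{-1}[a]x=[a\cdot x]=[a]\qquad\text{for all }a\in\Q.
\]
Equivalently, $[a]\,x=x\,[a]$ for every generator $[a]$ of $\cG(\Q)$, and so $x$ lies in the centre of $\cG(\Q)$.

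The only genuinely technical point is the inductive verification of the conjugation formula in the first step, and within that the subcase $g=[b]^{-1}$; none of it is deep, but one has to be careful with the $(-)^{b^{-1}}$ notation introduced in Definition~\ref{defn:quandle}, since the relations in $\cG(\Q)$ only mention conjugation by elements of $\Q$, not its inverse. Once this conjugation formula is in place the conclusion is immediate.
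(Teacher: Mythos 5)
Your proof is correct and follows essentially the same route as the paper: the paper invokes the already-noted $\cG(\Q)$-equivariance of $\eta\colon\Q\to\cG(\Q)$ (i.e.\ $g^{-1}[a]g=[a\cdot g]$) to conclude that any $g\in\Ker(\rho)$ fixes all generators $[a]$ under conjugation, whereas you simply spell out the inductive verification of that equivariance before drawing the same conclusion.
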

\begin{proof}
 Let $g\in \cG(\Q)$ be an element with $\rho(g)=\Id_\Q$. Then conjugation by $g$ fixes the image
 of $\eta\colon \Q\to\cG(\Q)$, which contains all generators of $G$: hence conjugation by $g$ fixes $\cG(\Q)$,
 i.e., $g$ is central in $\cG(\Q)$.
\end{proof}
In general equality does not hold, as we see in the following example. Let $G$ be a group acting on a set $S$.
Using Definitions \ref{defn:cGQ} and \ref{defn:GltimesS}, it is immediate to see that $\cG(G\ltimes S)$
is isomorphic to $G\times \bigoplus_{S/G}\Z$, i.e., the direct product of $G$ and the free abelian
group on the orbits of the action of $G$ on $S$.
If we assume that $G$ has non-trivial centre $\cZ(G)$ and acts faithfully on $S$, then $\cK(G\ltimes S)$ is the subgroup $\bigoplus_{S/G}\Z$, whereas the centre $\cZ(\cG(G\ltimes S))$
is the strictly larger subgroup $\cZ(G)\times \bigoplus_{S/G}\Z$.

Consider now the special case of a \emph{finite} PMQ $\Q$; then the group $\Aut_{\PMQ}(\Q)$ is also finite, and therefore we have a central extension
of groups with finite cokernel
\[
 \begin{tikzcd}
  1 \ar[r] & \cK(\Q) \ar[r] & \cG(\Q) \ar[r] & \Imm(\rho) \ar[r] & 1.
 \end{tikzcd}
\]

Since $\cG$ is a functor, it transforms automorphisms of PMQs into automorphisms of groups, hence $\cG$
gives a map of groups $\Aut_{\PMQ}(\Q)\to\Aut_{\Grp}(\cG(\Q))$. We obtain the following lemma.
\begin{lem}
 \label{lem:mainsequence}
There is a natural sequence of maps of PMQs:
\[
 \begin{tikzcd}
  \Q\ar[r,"\eta"] & \cG(\Q)\ar[r,"\rho"] & \Aut_{\PMQ}(\Q)^{op}\ar[r,"\cG"] &\Aut_{\Grp}(\cG(\Q))^{op}.
 \end{tikzcd}
\]
\end{lem}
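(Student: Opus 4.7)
The plan is to exhibit each of the three arrows and verify it is a morphism of PMQs. The first arrow $\eta$ is supplied directly by Definition \ref{defn:cGQ} as the unit of the adjunction $\cG\dashv(\text{forgetful})$, and is already a PMQ map by construction. The third arrow comes for free from the functoriality of $\cG$: it takes any $\phi\in\Aut_{\PMQ}(\Q)$ to $\cG(\phi)\in\Aut_{\Grp}(\cG(\Q))$ preserving composition, hence defines a group homomorphism $\Aut_{\PMQ}(\Q)^{op}\to\Aut_{\Grp}(\cG(\Q))^{op}$; by Example \ref{ex:group} this is a fortiori a PMQ map.

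The content therefore lies in producing the middle arrow $\rho$. I first construct a PMQ map $\tilde\rho\colon\Q\to\Aut_{\PMQ}(\Q)^{op}$ by $a\mapsto(-)^a$, and then invoke the universal property of $\cG(\Q)$ to extend it to the desired group homomorphism $\rho$. That each $(-)^a$ is indeed a PMQ automorphism of $\Q$ is immediate from the axioms of Definitions \ref{defn:quandle} and \ref{defn:PMQ}: bijectivity is quandle axiom (1), $\one^a=\one$ gives preservation of the unit, while the identities $(xy)^a=(x^a)(y^a)$ and $(x^y)^a=(x^a)^{y^a}$ give preservation of the partial product and of the conjugation.

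To check that $\tilde\rho$ itself is a PMQ map, the three conditions unfold against the opposite-group convention. Preservation of the unit reduces to $c^{\one}=c$. Preservation of the partial product, when $ab$ is defined, amounts to $(-)^a\cdot^{op}(-)^b=(-)^{ab}$; unravelling the opposite product reverses composition and gives the equality of functions $(-)^b\circ(-)^a=(-)^{ab}$, which evaluated at $c\in\Q$ is $(c^a)^b=c^{ab}$, i.e.\ the PMQ axiom $c^{(ab)}=(c^a)^b$. Preservation of conjugation amounts to $((-)^a)^{(-)^b}=(-)^{a^b}$ in the opposite group, i.e.\ $(-)^b\circ(-)^a\circ(-)^{b^{-1}}=(-)^{a^b}$ as endofunctions of $\Q$; evaluating at $c$ and applying the quandle identity $(x^y)^z=(x^z)^{y^z}$ with $x=c^{b^{-1}}$, $y=a$, $z=b$, together with $(c^{b^{-1}})^b=c$, yields $((c^{b^{-1}})^a)^b=c^{a^b}$, as required.

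No step presents a genuine obstacle; the only care needed is tracking the opposite-group convention, so that the direction of composition lines up correctly with the quandle and PMQ axioms. The extension of $\tilde\rho$ to $\rho$ via Definition \ref{defn:cGQ}, and the recognition of $\rho$ and $\cG$ as PMQ maps via Example \ref{ex:group}, are then automatic.
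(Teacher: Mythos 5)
Your proof is correct and follows essentially the same route as the paper: the lemma there is justified by the preceding discussion of Subsection 2.3, which likewise obtains $\rho$ by extending the assignment $a\mapsto(-)^a$ through the universal property of $\cG(\Q)$ and gets the last arrow from the functoriality of $\cG$. Your explicit verification that $a\mapsto(-)^a$ is a map of PMQs into $\Aut_{\PMQ}(\Q)^{op}$ (unit, partial product, conjugation, with the opposite-group convention tracked correctly) simply fills in the details the paper compresses into ``by definition of PMQ''.
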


The following definition generalises the situation of Lemma \ref{lem:mainsequence}. Compare
also with \cite[Definition 9.1]{Joyce}.
\begin{defn}
 \label{defn:PMQgrouppair}
 A \emph{PMQ-group pair} consists of a PMQ $\Q$, a group $G$, a map of PMQs $\fe\colon\Q\to G$ and
 an right action $\fr\colon G\to \Aut_{\PMQ}(\Q)^{op}$ of $G$ on $\Q$, such that the composition
 $\fr\circ\fe\colon\Q\to\Aut_{\PMQ}(\Q)^{op}$ is equal to the map $\rho\circ\eta$,
 and such that the map $\fe$ is $G$-equivariant if $G$ acts on $\Q$ by $\fr$ and on itself by right group conjugation.

 We usually denote by $(\Q,G,\fe,\fr)$ a PMQ-group pair, or just by $(\Q,G)$, leaving the maps $\fe$ and $\fr$ implicit. A map of PMQ-group pairs $(\Q,G,\fe,\fr)\to(\Q',G',\fe',\fr')$ is given by a pair $(\Psi,\Phi)$,
 where $\Psi\colon \Q\to \Q'$ is a map of PMQs and $\Phi\colon G\to G'$ is a map of groups, such that the following diagrams of PMQs commute, the second for all $g\in G$:
\[
 \begin{tikzcd}
  \Q\ar[r,"\fe"]\ar[d,"\Psi"'] & G \ar[d,"\Phi"]& & \Q\ar[r,"\fr(g)"]\ar[d,"\Psi"'] & \Q \ar[d,"\Psi"]\\
  \Q'\ar[r,"\fe'"] & G',& & \Q'\ar[r,"\fr'(\Phi(g))"] & \Q'.
 \end{tikzcd}
\]
We obtain a category $\PMQGrp$ of PMQ-group pairs.
\end{defn}
By Lemma \ref{lem:mainsequence}, $(\Q,\cG(\Q),\eta,\rho)$ is a PMQ-group pair for all PMQ $\Q$; moreover, since $\rho$ factors through the quotient by $\cK(\Q)$ (see Notation \ref{nota:cK}), we also have that
$(\Q,\cG(\Q)/\cK(\Q))$ is naturally a PMQ-group pair. Finally, if $S$ and $G$ are as in Definition
\ref{defn:fullsubPMQ}, then $(S,G)$ is a PMQ-group pair in a natural way.

\begin{defn}
 \label{defn:prodPMQgrouppairs}
 Let $(\Q,G,\fe,\fr)$ and $(\Q',G',\fe',\fr')$ be PMQ-group pairs.
 We define the product $(\Q,G)\times(\Q',G')$ as the PMQ-group pair $(\Q\times\Q',G\times G',\fe\times\fe',\fr\times\fr')$,
 where:
 \begin{itemize}
  \item the conjugation on $\Q\times\Q'$ is defined component-wise, and the product
  $(a,a')(b,b')$ is defined if and only if the products $ab\in\Q$ and $a'b'\in\Q'$ are defined: in this case
  $(a,a')(b,b')=(ab,a'b')$;
  \item $G\times G'$ is given the product group structure, and $\fe\times\fe'\colon(a,a')\mapsto(\fe(a),\fe'(a'))$;
  \item for $(g,g')\in G\times G'$ we
  set $\fr\times\fr'\colon (g,g')\mapsto \Big((a,a')\mapsto (\fr(g)(a),\fr(g')(a'))\Big)$.
 \end{itemize}
The product $(\Q,G)\times(\Q',G')$ is the categorical product of $(\Q,G)$ and $(\Q',G')$ in $\PMQGrp$,
and its projections are denoted
$p_{(\Q,G)}\colon(\Q,G)\times(\Q',G')\to(\Q,G)$ and
$p_{(\Q',G')}\colon(\Q,G)\times(\Q',G')\to(\Q',G')$.
\end{defn}
\begin{nota}
 \label{nota:oneone}
 We will denote by $(\one,\one)$ the unique PMQ-group pair $(\Q,G)$ for which both $\Q$ and $G$
 consist of the only element $\one$.
\end{nota}

\subsection{Complete PMQs}
\label{subsec:completePMQ}
\begin{defn}
  \label{defn:PMQcomplete}
 A PMQ $\Q$ is \emph{complete} if the product
 is defined for all pairs of elements. Complete PMQs are also called \emph{multiplicative quandles} and form a full subcategory $\MQ\subset\PMQ$. 
 A PMQ-group pair $(\Q,G)$ is complete if $\Q$ is complete.
 \end{defn}

\begin{defn}
\label{defn:hQ}
The inclusion functor $\MQ\to \PMQ$ admits a left adjoint: given a PMQ $\Q$, we construct its \emph{completion}
$\hQ$ as follows:
\begin{itemize}
 \item as a monoid, $\hQ$ is freely generated by elements $\hat a$ for $a\in\Q$, with generating relations
 given by $\hat{\one}=\one$, $\hat a\hat b=\widehat{ab}$ whenever the product $ab$ is defined in $\Q$,
 and $\hat a\hat b=\hat b\widehat{a^b}$ for all $a,b\in\Q$;
 \item there is a natural map of partial monoids $\Q\to\hQ$ given by $a\mapsto\hat a$; the map of partial monoids
 $\rho\circ\eta\colon \Q\to\Aut_{\PMQ}(\Q)^{op}$ extends to a map of monoids $\hQ\to\Aut_{\PMQ}(\Q)^{op}$;
 we can compose the latter with the natural maps of groups $\Aut_{\PMQ}(\Q)^{op}\hookrightarrow\Aut_{\PMon}(\Q)^{op}
 \to\Aut_{\Mon}(\hQ)^{op}\hookrightarrow\Aut_{\Set}(\hQ)^{op}$;
 \item the adjoint $\hQ\times\hQ\to\hQ$ of the map $\hQ\to\Aut_{\Set}(\hQ)^{op}$ makes $\hQ$ into a quandle;
 all axioms of PMQ are satisfied.
\end{itemize}
\end{defn}

Our next aim is to prove that the map of PMQs $\Q\to\hQ$ is injective, and the subset $\hQ\setminus\Q\subset\Q$ is an
ideal, in the following sense.
\begin{defn}
  \label{defn:PMQideal}
  Let $\Q$ be a PMQ and let $I\subset\Q$ be a subset. We say that $I$ is an \emph{ideal}
  if the following hold:
  \begin{itemize}
   \item $I$ is conjugation invariant, i.e. for all $a\in I$ and $b\in\Q$ we have $a^b,a^{b^{-1}}\in I$;
   \item $I$ absorbs products when they are defined, i.e. for all $a\in I$ and $b\in\Q$,
   if $ab$ is defined then it lies in $I$, and if $ba$ is defined then it lies in $I$.   
  \end{itemize}
\end{defn}
Note that if $\Psi\colon\Q\to\Q'$ is a map of PMQs and $I'\subset\Q'$ is an ideal, then also $I:=\Psi^{-1}(I')\subset\Q$
is an ideal.
\begin{defn}
 \label{defn:PMQgrouppairJoin}
 Let $(\Q,G,\fe,\fr)$ be a PMQ-group pair: we define a new PMQ-group pair $(\Q\Join G,G,\bar\fe,\bar\fr)$.
 As a set, we put $\Q\Join G=\Q\sqcup G$; for $a\in\Q$ and $g\in G$ we let $\bar a$ and $\bar g$
 be the corresponding elements in $\Q\Join G$.
 
 To define conjugation in $\Q\Join G$, for $a,b\in \Q$ and $g,h\in G$ we set $\bar a^{\bar b}=\overline{a^b}$,
 $\bar a^{\bar g}=\overline{\fr(g)(a)}$, $\bar g^{\bar a}=\overline{\fe(a)^{-1}g\fe(a)}$ and $\bar g^{\bar h}=\overline{h^{-1}gh}$.

 The product of the PMQ $\Q\Join G$ is defined for all pairs of elements:  for $a,b\in \Q$ and $g,h\in G$ we set
 $\bar g\bar h=\overline{gh}$, $\bar a\bar g=\overline{\fe(a)g}$, $\bar g\bar a=\overline{g\fe(a)}$; moreover,
 if $ab$ is defined in $\Q$ we set $\bar a\bar b=\overline{ab}$, otherwise we set $\bar a\bar b=\overline{\fe(a)\fe(b)}$.
 The unit of $\Q\Join G$ is $\bar{\one}\in\Q$.
 
 For $a\in\Q$ and $g,h\in G$, the map $\bar\fe\colon\Q\Join G\to G$ is given by $\bar\fe(\bar a)=\fe(a)$ and $\bar\fe(\bar g)=g$;
 the automorphism $\bar\fr(g)\in\Aut_{\PMQ}(\Q\Join G)^{op}$ sends $\bar a\mapsto \overline{\fr(g)(a)}$ and $\bar h\mapsto \overline{g^{-1}hg}$.

\end{defn}
Let now $\Q$ be any PMQ and fix a PMQ-group pair of the form $(\Q,G)$, e.g. $(\Q,\cG(\Q))$.
Note that the natural inclusion $\Q\subset\Q\Join G$ is a map of PMQs, extending with the identity of $G$ to a map of PMQ-group
pairs $(\Q,G)\to(\Q\Join G,G)$. Note also that $\Q\Join G$ is a complete PMQ; therefore the inclusion $\Q\subset\Q\Join G$ induces
a map of complete PMQs $\Psi\colon\hQ\to\Q\Join G$. Since the composition $\Q\to\hQ\overset{\Psi}{\to}\Q\Join G$ is injective, also $\Q\to\hQ$ is injective, so we can regard $\Q$ as a subset of $\hQ$.

Moreover $G\subset \Q\Join G$ is an ideal, hence $\Psi^{-1}(G)\subset\hQ$ is also an ideal. We claim that $\Psi^{-1}(G)=\hQ\setminus\Q$. Since $\Psi(\Q)=\Q$, we have the inclusion $\Psi^{-1}(G)\subseteq\hQ\setminus\Q$. On the other hand
an element $w\in\hQ\setminus\Q$ can be represented by a word $\hat a_1\dots\hat a_r$ such
that the product $a_1\dots a_r$ is not defined in $\Q$ (otherwise $w$ would lie in $\Q\subset\hQ$). By definition
of the product on $\Q\Join G$ we have $\Psi(w)\in G$. We obtain the following proposition.
\begin{prop}
 \label{prop:hQ}
 Let $\Q$ be a PMQ. Then the natural map of PMQs $\Q\to\hQ$ is injective, and $\hQ\setminus\Q$ is an ideal of $\hQ$.
\end{prop}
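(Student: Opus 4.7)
The plan is to leverage the universal property of the completion $\hQ$ from Definition~\ref{defn:hQ}: since $\hQ$ is the initial complete PMQ receiving a map from $\Q$, any morphism of PMQs from $\Q$ into a complete PMQ factors uniquely through $\hQ$. I would therefore exhibit a complete PMQ into which $\Q$ visibly embeds, and analyse the induced factorisation map.

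The natural candidate is $C=\Q\Join G$ from Definition~\ref{defn:PMQgrouppairJoin}, where $(\Q,G)$ is any PMQ-group pair, e.g.\ $G=\cG(\Q)$ as in Lemma~\ref{lem:mainsequence}. By construction $\Q\Join G$ is a complete PMQ whose underlying set is the disjoint union $\Q\sqcup G$, so the canonical map $\Q\hookrightarrow\Q\Join G$ is a morphism of PMQs that is already injective on underlying sets. The universal property yields a morphism $\Psi\colon\hQ\to\Q\Join G$ whose composition with $\Q\to\hQ$ recovers this inclusion, forcing $\Q\to\hQ$ to be injective as well; this settles the first claim.

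For the ideal claim, I would first observe that $G\subset\Q\Join G$ is itself an ideal of $\Q\Join G$: inspection of the formulas of Definition~\ref{defn:PMQgrouppairJoin} shows that $G$ is closed under conjugation by every element of $\Q\Join G$, and that every (necessarily defined) product involving an element of $G$ lies in $G$. Since preimages of ideals under morphisms of PMQs are ideals, $\Psi^{-1}(G)\subset\hQ$ is an ideal, and it suffices to establish $\Psi^{-1}(G)=\hQ\setminus\Q$. The inclusion $\Psi^{-1}(G)\subseteq\hQ\setminus\Q$ is immediate, since $\Psi$ restricts to the inclusion $\Q\hookrightarrow\Q\subset\Q\Join G$ and $\Q$ is disjoint from $G$ inside $\Q\Join G$.

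For the reverse inclusion I would take $w\in\hQ\setminus\Q$, represent it by a word $\hat a_1\cdots\hat a_r$ using the monoid presentation of $\hQ$, and observe that the total product $a_1\cdots a_r$ cannot be defined in $\Q$: otherwise the defining relation $\hat a_1\cdots\hat a_r=\widehat{a_1\cdots a_r}$ would place $w$ in $\Q$. Then the formulas of Definition~\ref{defn:PMQgrouppairJoin} show that the product $\overline{a_1}\cdots\overline{a_r}$ computed in $\Q\Join G$ drops into $G$ at the first position where the $\Q$-product becomes undefined, and stays in $G$ thereafter, so $\Psi(w)\in G$. The mildly delicate step is this last one: it requires verifying that the property ``the underlying word has a defined $\Q$-product'' is preserved by the monoid relations defining $\hQ$ (in particular by the relation $\hat a\hat b=\hat b\widehat{a^b}$), so that elements of $\Q\subset\hQ$ are exactly those equivalence classes possessing a length-one representative. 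This is a routine invariance check on the generating relations, and once in place the proposition is assembled from the preceding steps.
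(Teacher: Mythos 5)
Your proposal is correct and takes essentially the same route as the paper: embed $\Q$ into the complete PMQ $\Q\Join\cG(\Q)$, invoke the universal property to obtain $\Psi\colon\hQ\to\Q\Join G$, deduce injectivity from the injectivity of the composite, and identify $\hQ\setminus\Q$ with $\Psi^{-1}(G)$, the preimage of the ideal $G$. The ``delicate step'' you flag is in fact even easier than an invariance check: one only needs the implication that a word whose total product is defined in $\Q$ represents an element of $\Q\subset\hQ$ (immediate from the relation $\hat a\hat b=\widehat{ab}$), and its contrapositive already guarantees that every representative of $w\in\hQ\setminus\Q$ has undefined total product.
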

To conclude the subsection, note that if $\Q$ is already a complete PMQ, then the natural map of PMQs $\Q\to\hQ$
is an isomorphism. In particular every complete PMQ is in the essential image of the completion functor $\PMQ\to\MQ$, and thus, whenever we want to consider a complete PMQ, we can assume that it has the form $\hQ$ for some PMQ $\Q$. For this reason we shall often abuse notation and denote by $\hQ$ a generic multiplicative quandle, even if no ``underlying'' PMQ $\Q$ is specified, whose completion is $\hQ$.

\begin{nota}
 \label{nota:JQ}
 For a PMQ $\Q$ we denote by $\cJ(\Q)$ the ideal $\hQ\setminus\Q$ of $\hQ$.
 Since the natural map $\Q\to\hQ$ is injective, we will often abuse notation and denote by $a\in\hQ$ 
 the element $\hat a$ corresponding to $a\in\Q$.
\end{nota}

\section{Free groups and associated PMQs}
\label{sec:freegroups}
In this section we study certain PMQs arising as subsets of free groups.
\subsection{Free sub-PMQs}
We fix natural numbers $0\leq l\leq k$ throughout the section.
\begin{nota}
\label{nota:freegroup}
 We denote by $\bF^k$ the free group on $k$ generators $\gen_1,\dots,\gen_k$. The abelianisation
 of $\bF^k$ is identified with $\Z^k$, generated by the classes of the generators $\gen_1,\dots,\gen_k$.
 We denote by $\ab\colon\bF^k\to\Z^k$ the abelianisation map.
\end{nota}
\begin{defn}
 \label{defn:freePMQ}
 Let $0\leq l\leq k$. We denote by $\FQ^k_l\subset\bF^k$ the union of $\set{\one}$ and the conjugacy classes of the generators
 $\gen_1,\dots,\gen_l$. The set $\FQ^k_l$ inherits from $\bF^k$ the structure of PMQ in the sense of Definition \ref{defn:fullsubPMQ}, and we call it the \emph{free sub-PMQ of $\bF^k$ on the first $l$ generators}.
\end{defn}
To check that the conditions from Definition \ref{defn:fullsubPMQ} are satisfied, note that
each non-unit element of the set $\FQ^k_l$ is mapped under the map $\ab$ to a vector in $\Z^k$
with one entry (among the first $l$) equal to 1, and all other entries equal to 0: hence
the product in $\bF^k$ of two or more non-trivial elements in $\FQ^k_l$ does not lie in $\FQ^k_l$,
and thus $\FQ^k_l$ inherits from $\bF^k$ a structure of \emph{trivial} PMQ (see Definition \ref{defn:PMQ}.
It follows that $(\bF^k,\FQ^k_l)$ is a PMQ-group pair (see Definition \ref{defn:PMQgrouppair}).

The following Theorem generalises \cite[Theorem 4.1]{Joyce}.
\begin{thm}
 \label{thm:FQfreePMQ}
 Let $(\Q,G,\fe,\fr)$ be a PMQ-group pair, let $a_1,\dots,a_l\in\Q$ and let $g_{l+1},\dots,g_k\in G$.
 Then there are unique maps $\phi\colon\bF^k\to\cG(\Q)$ of groups and $\psi\colon\FQ^k_l\to\Q$ of PMQs
 such that $\psi\colon\gen_i\mapsto a_i$ for all $1\leq i\leq l$, $\phi\colon \gen_i\mapsto g_i$ for all $l+1\leq i\leq k$
 and $(\psi,\phi)\colon(\FQ^k_l,\bF^k)\to(\Q,G)$ is a map of PMQ-group pairs.
\end{thm}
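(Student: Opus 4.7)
The plan is to leverage the freeness of $\bF^k$ as a group together with the fact that $\FQ^k_l\setminus\{\one\}$ is, by construction, the set of $\bF^k$-conjugates of the first $l$ generators. This will reduce the theorem to prescribing $\psi$ and $\phi$ on the generators $\gen_1,\dots,\gen_k$, with a single well-definedness check forming the main obstacle.

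First, I would establish uniqueness. Since $\bF^k$ is free on the $\gen_i$, the homomorphism $\phi$ is determined by its values on these generators: they are prescribed to be $g_i$ for $i>l$, and, via the first commutative square of a PMQ-group pair morphism, forced to be $\fe(a_i)$ for $i\le l$. Every non-unit element of $\FQ^k_l$ can be written as $\gen_i^w$ with $1\le i\le l$ and $w\in\bF^k$, so the compatibility of $\psi$ with the actions $\fr$ on either side forces $\psi(\gen_i^w)=\fr(\phi(w))(a_i)$.

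For existence, I would take $\phi$ to be the group homomorphism defined on generators as above, and set $\psi(\one):=\one$ and $\psi(\gen_i^w):=\fr(\phi(w))(a_i)$. The main obstacle is the well-definedness of $\psi$. If $\gen_i^w=\gen_j^{w'}$ are both non-trivial, comparing abelianisations in $\Z^k$ forces $i=j$; moreover $\gen_i^w=\gen_i^{w'}$ iff $w'w^{-1}$ centralises $\gen_i$ in $\bF^k$, and the centraliser of $\gen_i$ in the free group is $\langle\gen_i\rangle$, so $w'=\gen_i^m w$ for some $m\in\Z$. Using $\phi(\gen_i)=\fe(a_i)$ and that $\fr$ is a right action, one reduces the needed equality $\fr(\phi(\gen_i^m w))(a_i)=\fr(\phi(w))(a_i)$ to $\fr(\fe(a_i))(a_i)=a_i$, which by the PMQ-group pair axiom $\fr\circ\fe=\rho\circ\eta$ rewrites as the quandle identity $a_i^{a_i}=a_i$; iteration handles general $m>0$, and the case $m<0$ follows since $(-)^{a_i}$ is a bijection of $\Q$.

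Finally, I would verify that $\psi$ is a morphism of PMQs and that $(\psi,\phi)$ is a morphism of PMQ-group pairs. The partial product on $\FQ^k_l$ is trivial, so respecting the partial product reduces to $\psi(\one)=\one$. For the quandle structure, conjugation computed in $\bF^k$ gives $(\gen_i^w)^{(\gen_j^{w'})}=\gen_i^{w\cdot\gen_j^{w'}}$, and using the PMQ-group pair identity $b^c=\fr(\fe(c))(b)$ in $\Q$ together with the relation $\fe\circ\psi=\phi\circ\iota$ (which itself holds by direct computation from $G$-equivariance of $\fe$), one checks $\psi(x^y)=\psi(x)^{\psi(y)}$. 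The two compatibility squares in Definition \ref{defn:PMQgrouppair} then hold by construction of $\phi$ and $\psi$.
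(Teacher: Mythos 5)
Your proposal is correct and follows essentially the same route as the paper's proof: define $\phi$ on generators by freeness, define $\psi(\gen_\nu^w)$ via the action of $\phi(w)$, observe that the only ambiguity in the representative is $w\mapsto\gen_\nu^m w$ (the paper phrases this by first normalising $w$ and then relaxing; you phrase it via the centraliser $\langle\gen_\nu\rangle$), and resolve it using $\fr\circ\fe=\rho\circ\eta$ together with the quandle axiom $a^a=a$. The remaining verifications match the paper's as well.
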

Before proving Theorem \ref{thm:FQfreePMQ} we introduce some notation.
\begin{nota}
 \label{nota:reducedexpression}
 Let  $w\in\bF^k$. The \emph{reduced expression} of $w$ as a word in
 the letters $\gen_1^{\pm 1},\dots,\gen_k^{\pm 1}$ takes the form $w=\gen_{j_1}^{\epsilon_1}\dots\gen_{j_m}^{\epsilon_m}$,
 where $\epsilon_i=\pm 1$ for all $1\leq i\leq m$, and no two consecutive letters cancel out in $\bF^k$. The number $m$ is also called \emph{word-length norm} of $w$ and denoted by $\abs{w}$.
\end{nota}
\begin{proof}[Proof of Theorem \ref{thm:FQfreePMQ}]
Since $\bF^k$ is a free group on $\gen_1,\dots,\gen_k$, $\phi$ is uniquely determined by the requirements $\phi(\gen_i)=\fe(a_i)$ for $1\leq i\leq l$ and $\phi(\gen_i)=g_i$ for $l+1\leq i\leq k$.
 To show existence of $\psi$, start by setting $\psi(\one)=\one$.
 Let $\fg\neq \one$ be an element in $\FQ^k_l$; then there are unique $w\in\bF^k$ and $1\leq \nu\leq l$ such that:
 \begin{enumerate}
 \item $\fg=w^{-1}\gen_{\nu}w$ in $\bF^k$;
 \item if $w=\gen_{j_1}^{\epsilon_1}\dots\gen_{j_m}^{\epsilon_m}$ is the reduced expression of $w$,
 then $m=0$ or $\gen_{j_1}\neq\gen_{\nu}$.
\end{enumerate}
 For $a\in\Q$ and $g\in G$ denote by $a^g\in\Q$ the image of $a$ under the map $\fr(g)\colon\Q\to\Q$.
 We then set $\psi(\fg)\colon=a_{\nu}^{\phi(w)}$. This defines a map of sets $\psi\colon\FQ^k_l\to\Q$, with $\psi(\gen_i)=a_i$ for all $1\leq i\leq l$. 
 
 If we drop condition (2), the choice of $w$ and $\nu$ fails to be unique only because of the following ambiguity: 
 one can replace $w$ by $\gen_{\nu}^e w$, for some $e\in\mathbb{Z}$. Note however that
 $a_{\nu}=a_{\nu}^{\fe(a_{\nu})}=a_{\nu}^{\fe(a_{\nu})^{-1}}$, because $a_{\nu}=a_{\nu}^{a_{\nu}}=a_{\nu}^{a_{\nu}^{-1}}$: at this point the assumption that $\Q$ is a \emph{quandle},
 and not only a \emph{rack}, is crucial (see Definition \ref{defn:quandle}).
 Therefore $\psi$ is well-defined by the formula given above even if we drop condition (2)
 in the choice of $w$ and $\nu$. By construction $\psi$ is a map of PMQs and $(\psi,\phi)$ is a map of PMQ-group pairs.
 
 Conversely, let $\psi'\colon\FQ^k_l\to\Q$ be a map of PMQs such that $(\psi',\phi)$ is a map of PMQ-group pairs, and such
 that $\psi'\colon\gen_i\mapsto q_i$ for all $1\leq i\leq l$.
 Then $\psi'$ satisfies the formula above for any $\fg\in\FQ^k_l$, and hence $\psi'=\psi$: this shows uniqueness of $\psi$.
\end{proof}
In the proof of Theorem \ref{thm:FQfreePMQ} we see for the first time why it is convenient
to work with quandles instead of racks, see the discussion after Definition \ref{defn:quandle}.
Theorem \ref{thm:FQfreePMQ} motivates the use of the word ``free'' in Definition \ref{defn:freePMQ}.

\subsection{Decompositions of elements in free groups}
In the rest of the section we study the problem of decomposing elements $\fg\in\bF^k$ as products of elements
in $\FQ^k_l$ in different ways. Proposition \ref{prop:standardmove} ensures that if
$\fg$ has a particularly nice form, then there is essentially only one such decomposition.
\begin{defn}
 \label{defn:decomposition}
Let $\fg\in\bF^k$; a decomposition of $\fg$ with respect to $\FQ^k_l$ is a sequence $\ufg=(\fg_1,\dots,\fg_r)$
of elements in $\FQ^k_l$ such that the product $\fg_1\dots \fg_r$ is equal to $\fg$. 
\end{defn}
In general, if an element $\fg\in\bF^k$ admits a decomposition with respect to
$\FQ^k_l$, this decomposition is not unique:
for example, if $\fg$ can be decomposed as $\fg_1\cdot\fg_2$, then it can also be decomposed as
$\fg_2\cdot\fg_1^{\fg_2}$ or $\fg_2^{\fg_1^{-1}}\cdot\fg_1$.

However we note that the number $r$ of factors appearing in any decomposition $\ufg$
of $\fg$ with respect to $\FQ^k_l$ is the same for any decomposition. To see this,
consider again the map $\ab$ from Notation \ref{nota:freegroup}:
then $\ab(\fg)=\ab(\fg_1)+\dots+\ab(\fg_r)$, and each summand $\ab(\fg_i)$ is a vector
with one entry equal to 1 and all other entries equal to 0; hence $r$ only depends on $\fg$ and is equal to the sum of the entries in $\ab(\fg)$.

\begin{defn}
 \label{defn:standardmove}
Let $\Q$ be a quandle; a \emph{standard move} on a sequence of elements
$(a_1,\dots,a_r)$ replaces, for some $1\leq i\leq r-1$,
the pair of consecutive elements $(a_i,a_{i+1})$ with either $\pa{a_{i+1},a_i^{a_{i+1}}}$
or $\pa{a_{i+1}^{a_i^{-1}},a_i}$.
\end{defn}
If one applies, after one other,
two standard moves on the same pair of indices $(i,i+1)$, using once each of the two rules
$(a_i,a_{i+1})\mapsto\pa{a_{i+1},a_i^{a_{i+1}}}$ and $(a_i,a_{i+1})\mapsto\pa{a_{i+1}^{a_i^{-1}},a_i}$, one recovers
the original sequence.
In the case $\Q=\bF^k$, the reader will notice the connection between standard moves and Artin's action
of the braid group $\Br_n$ on the free group $\mathbb{F}^n$:
for $1\leq i\leq n-1$, the standard generator $\mathfrak{b}_i\in\Br_n$ acts on $\mathbb{F}^n$
by mapping the list of generators $(\gen_1,\dots,\gen_k)$ to the list of generators
$(\gen_1,\dots,\gen_{i-1},\gen_{i+1},\gen_i^{\gen_{i+1}},\gen_{i+2},\dots,\gen_k)$, i.e., by applying
a standard move.

\begin{prop}
 \label{prop:standardmove}
 Let $\tfg=\gen_1\dots\gen_r$ for some $1\leq r\leq l$,
 and let $(\tfg_1,\dots,\tfg_r)$ be a decomposition of $\tfg$ with respect to
 $\FQ^k_l$. Then it is possible to pass from the decomposition $(\tfg_1,\dots,\tfg_r)$
 to the decomposition $(\gen_1,\dots,\gen_r)$ by applying 
 a suitable sequence of standard moves.
\end{prop}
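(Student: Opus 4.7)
The plan is induction on $r$. The base case $r=1$ is immediate: since the inclusion $\FQ^k_l \hookrightarrow \bF^k$ is injective, a length-one decomposition $(\fg_1)$ of $\gen_1$ forces $\fg_1 = \gen_1$.

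For $r \geq 2$ I would use two structural observations. First, via $\ab\colon \bF^k \to \Z^k$, each non-trivial element of $\FQ^k_l$ has abelianisation a standard basis vector $e_{\nu}$ with $1 \leq \nu \leq l$; since $\ab(\gen_1 \cdots \gen_r) = e_1 + \cdots + e_r$, each $\fg_i$ must be non-trivial and a conjugate of a unique $\gen_{\nu_i}$, and $(\nu_1, \dots, \nu_r)$ is a permutation of $(1, \dots, r)$. Second, a standard move on $(\fg_i, \fg_{i+1})$ swaps their abelianisations---since conjugation is trivial at the abelian level---so by composing standard moves I can realise any permutation of the $\nu_i$.

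I then plan to run a secondary induction on $N(\ufg) := \sum_{i=1}^r |\fg_i|$, with $|\cdot|$ the reduced word length in $\bF^k$. In the base case $N(\ufg) = r$ every $\fg_i$ is a single generator, and the identity $\gen_{\nu_1} \cdots \gen_{\nu_r} = \gen_1 \cdots \gen_r$ of reduced words in the free group forces $\nu_i = i$, giving $\ufg = (\gen_1, \dots, \gen_r)$. For $N(\ufg) > r$ the raw product $\fg_1 \cdots \fg_r$ of length $N(\ufg)$ reduces to the length-$r$ word $\gen_1 \cdots \gen_r$, so some pair of adjacent letters cancels during free reduction; since each $\fg_i$ is individually reduced, the cancellation must straddle some boundary $\fg_i \fg_{i+1}$, with $\fg_i$ ending in a letter $\gen^{\epsilon}$ and $\fg_{i+1}$ beginning with $\gen^{-\epsilon}$. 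Writing $\fg_i = w_i^{-1} \gen_{\nu_i} w_i$ and $\fg_{i+1} = w_{i+1}^{-1} \gen_{\nu_{i+1}} w_{i+1}$ in canonical conjugator form (with $w_j$ not beginning in $\gen_{\nu_j}^{\pm 1}$), I would perform a case analysis on where the letters $\gen^{\epsilon}$ and $\gen^{-\epsilon}$ sit in the $w_i$, $w_{i+1}$ data, and in each case exhibit either a single standard move at position $(i,i+1)$ or a short sequence (preceded by a preparatory transposition of conjugacy types) whose net effect is to push the cancelling pair $\gen^{\epsilon}\gen^{-\epsilon}$ into the interior of one of the new factors, where free reduction eliminates it and strictly decreases $N$.

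The hard part will be the careful combinatorial bookkeeping in this last step. A single standard move conjugates one factor by another and typically increases individual factor lengths, so it is not automatic that a net length decrease is achieved. The argument must use in an essential way the rigidity of free reduction in $\bF^k$: the pattern of cancellations in the reduced product is uniquely determined, and the standard move should be chosen so as to relocate the targeted cancelling pair to a position where it does reduce in the new canonical form. Verifying this for all relative positions of $\gen^{\epsilon}, \gen^{-\epsilon}$ with respect to the pivot letters $\gen_{\nu_i}, \gen_{\nu_{i+1}}$ is where the work lies, and this is also where the assumption that we are working in a free group (and not a more general group) is used.
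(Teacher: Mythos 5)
Your overall strategy is the classical Artin-style one: induct on the total reduced word length $N(\ufg)=\sum_i|\fg_i|$, locate a cancellation straddling a boundary $\fg_i\fg_{i+1}$, and apply a standard move there to decrease $N$. The preliminary observations are all correct (each $\fg_i$ is a conjugate of a unique $\gen_{\nu_i}$ with $\set{\nu_1,\dots,\nu_r}=\set{1,\dots,r}$, and the base case $N=r$ forces $\ufg=(\gen_1,\dots,\gen_r)$). But the inductive step, which is the entire content of the proposition, is asserted rather than proved, and the local analysis you sketch does not go through as stated. Write $\fg_i=w_i^{-1}\gen_{\nu_i}w_i$ and $\fg_{i+1}=w_{i+1}^{-1}\gen_{\nu_{i+1}}w_{i+1}$ in canonical form. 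When one of $w_i,w_{i+1}$ is trivial, a single standard move at position $i$ does decrease $N$, as you indicate. But in the remaining case the cancellation at the boundary means precisely that $w_i$ and $w_{i+1}$ end in the same letter, and then both available standard moves at position $i$ replace one factor by a conjugate whose canonical conjugator is built from $w_i$, $w_{i+1}$ \emph{and} a pivot letter $\gen_{\nu_i}^{\pm1}$ or $\gen_{\nu_{i+1}}^{\pm1}$; its reduced length can exceed the length it replaces, and there is no a priori reason either move decreases $N$. Making this work requires a genuinely global analysis of the cancellation pattern in $\fg_1\cdots\fg_r$ (this is the hard part of Artin's theorem, cf.\ Birman's Theorem 1.9), not a local case check at a single boundary; "a short sequence preceded by a preparatory transposition" is not a proof of its existence. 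You correctly flag this as the hard part, but the missing content is the key idea, not bookkeeping.

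For comparison, the paper sidesteps exactly this difficulty by changing the quantity that decreases. It encodes the decomposition as a \emph{generalised decomposition} (a formal, unreduced expression built from products and single-letter conjugations) and measures it by a weight that counts every conjugation symbol, rather than by the sum of reduced lengths. Lemma \ref{lem:tencases} shows that any cancellation in the straightforward computation forces one of ten local patterns, each admitting a rewrite that strictly decreases the weight \emph{by construction}; each rewrite then induces either the identity or a standard move on the associated decomposition. So the paper never has to prove that reduced lengths decrease under a standard move — which is precisely the claim your plan leaves unestablished. If you want to complete your route, you should either carry out the full cancellation analysis à la Artin--Birman, or switch to a monotone weight of the paper's kind.
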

We will prove Proposition \ref{prop:standardmove} in the rest of the section.

\subsection{Generalised decompositions and straightforward computations}
\begin{defn}
 \label{defn:generaliseddecomposition}
 A \emph{generalised decomposition} (gd) in $\bF^k$ is a formal, structured iteration of the operations of
 conjugation by an element $\gen_i^{\pm 1}$ and of product,
 using only the elements $\gen_1,\dots,\gen_k$ as elementary inputs and taking the associativity
 of the product into account. More precisely, the set of all
 gds is recursively constructed as follows:
 \begin{itemize}
  \item for all $1\leq i\leq k$ we have a gd $\gen_i$;
  \item if $x$ and $y$ are gds, then also $x\cdot y$ is a gd;
  \item if $x$ is a gd, then for all $1\leq i\leq k$ both $\pa{x}^{\gen_i}$ and $\pa{x}^{\gen_i^{-1}}$
  are gds.
 \end{itemize}
 Associativity of the product is formally taken into account, i.e. for any three gds $x_1,x_2,x_3$ the
 two gds $x_1\cdot\pa{x_2\cdot x_3}$ and $\pa{x_1\cdot x_2}\cdot x_3$ are equivalent, and we write
 both of them as $x_1\cdot x_2\cdot x_3$.
 The \emph{weight} $\|x\|$ of a gd $x$ is defined recursively by:
 \begin{itemize}
  \item $\|\gen_i\|=1$ for all $1\leq i\leq k$;
  \item if $x$ and $y$ are gds, then $\|x\cdot y\|=\|x\|+\|y\|$;
  \item if $x$ is a gd and $1\leq i\leq k$, then $\left\|\pa{x}^{\gen_i}\right\|=\left\|\pa{x}^{\gen_i^{-1}}\right\|=\|x\|+2$.
 \end{itemize}
\end{defn}
\begin{defn}
\label{defn:straightforwardcomputation}
 Each gd $x$ gives rise to an element $\overline{x}\in\bF^k$ by \emph{straightforward computation},
 i.e. by interpreting product and conjugation inside $\bF^k$. We first define recursively
 a \emph{formal computation} associating with every gd $x$ a word in the letters $\gen_1^{\pm 1},\dots,\gen_k^{\pm 1}$:
 \begin{itemize}
  \item the formal computation of the gd $\gen_i$ is one-letter word $(\gen_i)$;
  \item let $x$ be a gd and suppose that the formal computation of $x$ is
  $(\gen_{\nu_1}^{\epsilon_1},\dots,\gen_{\nu_{\lambda}}^{\epsilon_{\lambda}})$;
  then the formal computations of $\pa{x}^{\gen_i}$ and $\pa{x}^{\gen_i^{-1}}$ are, respectively,
  \[
  (\gen_i^{-1}\,,\,\gen_{\nu_1}^{\epsilon_1},\dots,\gen_{\nu_{\lambda}}^{\epsilon_{\lambda}}\,,\,\gen_i)
  \quad\mbox{ and }\quad
    (\gen_i\,,\,\gen_{\nu_1}^{\epsilon_1},\dots,\gen_{\nu_{\lambda}}^{\epsilon_{\lambda}}\,,\,\gen_i^{-1});
  \]
  \item let $x$ and $y$ be gds, then the formal computation of $x\cdot y$
  is the concatenation of the formal computations of $x$ and of $y$.
 \end{itemize}
 If the formal computation of a gd $x$ is
 $(\gen_{\nu_1}^{\epsilon_1},\dots,\gen_{\nu_{\lambda}}^{\epsilon_{\lambda}})$, we set
$\overline{x}=\gen_{\nu_1}^{\epsilon_1}\dots \gen_{\nu_{\lambda}}^{\epsilon_{\lambda}}\in\bF^k$.
 We say that the straightforward computation of the gd $x$ involves \emph{no cancellation}
 if no cancellation between two consecutive occurrences of $\gen_i^{\pm 1}$
 occurs in the product $\gen_{\nu_1}^{\epsilon_1}\dots \gen_{\nu_{\lambda}}^{\epsilon_{\lambda}}$.
 For an element $\fg\in\bF^k$ we say that \emph{$x$ is a gd of $\fg$} if $\fg=\overline{x}$.
\end{defn}
\begin{ex}
 \label{ex:fff}
 For $\fg=\gen_1\gen_2\gen_3\in\bF^4$ the following are gds of $\fg$:
 \begin{itemize}
 \itemsep0.1cm
  \item $\gen_1\cdot\gen_2\cdot\gen_3$, having weight 3;
  \item $\gen_2\cdot\pa{\gen_1}^{\gen_2}\cdot\gen_3$, having weight 5;
  \item $\gen_3\cdot\pa{\gen_2\cdot\pa{\gen_1}^{\gen_2}}^{\gen_3}$, having weight 7;
  \item $\gen_3\cdot\pa{\gen_2}^{\gen_3}\cdot\pa{\pa{\gen_1}^{\gen_2}}^{\gen_3}$, having weight 9;
  \item $\pa{\pa{\gen_3\cdot\pa{\gen_2\cdot\pa{\gen_1}^{\gen_2}}^{\gen_3}}^{\gen_4}}^{\gen_4^{-1}}$, having weight 11.
 \end{itemize}
 \end{ex}
Note that the weight of a gd is the length of its formal computation.
Clearly for any gd $x$ of $\fg\in\bF^k$ we have $\|x\|\geq \abs{\fg}$,
where $\|x\|$ is the weight of $x$ and $\abs{\fg}$ is the word-length norm of
$\fg\in\bF^k$ (see Notation \ref{nota:reducedexpression}).
The equality occurs exactly when the straightforward
computation of $\fg$ from $x$ involves no cancellation.

 \begin{lem}
  \label{lem:tencases}
  Let $x$ be a gd of $\fg\in\bF^k$ and suppose that the straightforward computation of $x$ involves some cancellation. Then $x$ contains a sub-gd that has one of the following forms, where $y_1$ and $y_2$ are gds
  and $1\leq i\leq k$:
  \vspace{0.2cm}
\begin{center}
\begin{tabular}{ll}
(1)   $\pa{y_1}^{\gen_i}\cdot\pa{y_2}^{\gen_i}$;
&  (6)  $\pa{y_1\cdot \pa{y_2}^{\gen_i}}^{\gen_i^{-1}}$;\\[0.2cm]
(2)  $\pa{y_1}^{\gen_i^{-1}}\cdot\pa{y_2}^{\gen_i^{-1}}$;
&  (7)  $\pa{\pa{y_1}^{\gen_i}\cdot y_2}^{\gen_i^{-1}}$;\\[0.2cm]
(3)  $\gen_i\cdot \pa{y_1}^{\gen_i}$ \,or $\pa{\gen_i\cdot y_1}^{\gen_i}$;
&  (8) $\pa{y_1\cdot \pa{y_2}^{\gen_i^{-1}}}^{\gen_i}$;\\[0.2cm]
(4)  $\pa{y_1}^{\gen_1^{-1}}\cdot \gen_i$ \,or $\pa{y_1\cdot \gen_i}^{\gen_1^{-1}}$;
&  (9) $\pa{\pa{y_1}^{\gen_i^{-1}}\cdot y_2}^{\gen_i}$;\\[0.2cm]
(5)  $\pa{\pa{y_1}^{\gen_i}}^{\gen_i^{-1}}$ \, or $\pa{\pa{y_1}^{\gen_i^{-1}}}^{\gen_i}$;
&  (10)  $\pa{\gen_i}^{\gen_i}$\, or $\pa{\gen_i}^{\gen_i^{-1}}$.
\end{tabular}
\end{center}

 \end{lem}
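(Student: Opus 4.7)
The plan is to proceed by induction on the weight $\abs{x}$ of the gd. The base case $\abs{x}=1$ (so $x=\gen_i$ is a leaf) is vacuous since its formal straightforward computation is a single letter, with no cancellation possible. In the inductive step, by Definition \ref{defn:generaliseddecomposition} there are two cases: either $x=y_1\cdot y_2$ is a product, or $x=\pa{y}^{\gen_i^{\pm 1}}$ is a conjugation. Throughout I think of each letter in the formal straightforward computation of any gd $z$ as being one of three \emph{types}: a \emph{leaf letter}, emitted by a leaf of $z$; an \emph{opening letter} $\gen_j^{\mp 1}$ or a \emph{closing letter} $\gen_j^{\pm 1}$, emitted by the left/right boundary of some conjugation sub-gd $\pa{\cdot}^{\gen_j^{\pm 1}}$ of $z$.

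Suppose first $x=y_1\cdot y_2$. If the cancellation occurs strictly within $\mathrm{comp}(y_1)$ or $\mathrm{comp}(y_2)$, the inductive hypothesis applied to that sub-gd provides the required sub-gd of one of the ten forms. Otherwise the cancellation occurs at the \emph{product gap}, between the rightmost letter of $\mathrm{comp}(y_1)$ and the leftmost letter of $\mathrm{comp}(y_2)$. I then analyse the three possible type combinations: a leaf-leaf cancellation is impossible because leaves emit only positive letters. A leaf-opening cancellation forces the two letters to be $\gen_i$ and $\gen_i^{-1}$ coming respectively from a leaf $\gen_i$ at the right of $y_1$ and from the opening of a conjugation $\pa{z}^{\gen_i}$ at the left of $y_2$: by flattening $y_1\cdot y_2$ via associativity I isolate the two-factor sub-gd $\gen_i\cdot\pa{z}^{\gen_i}$, matching form (3). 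A closing-leaf cancellation is symmetric, producing form (4); and a closing-opening cancellation produces form (1) (if the two conjugations are both by $\gen_i$) or form (2) (if both are by $\gen_i^{-1}$).

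Suppose now $x=\pa{y}^{\gen_i^{\epsilon}}$. Again, if the cancellation lies within $\mathrm{comp}(y)$ I apply induction. Otherwise it takes place at one of the two \emph{conjugation gaps} of the outer conjugation: either between the opening letter $\gen_i^{-\epsilon}$ and the leftmost letter of $\mathrm{comp}(y)$, or between the rightmost letter of $\mathrm{comp}(y)$ and the closing letter $\gen_i^{\epsilon}$. For the left gap, I subdivide based on whether the leftmost emission of $y$ is a leaf letter (forcing $y$ to have shape $\gen_i$ or $\gen_i\cdot y_1$, giving (10) or (3) second alternative) or an opening letter of a sub-conjugation $\pa{y'}^{\gen_i^{-\epsilon}}$ (forcing $y$ to have shape $\pa{y'}^{\gen_i^{-\epsilon}}$ or $\pa{y'}^{\gen_i^{-\epsilon}}\cdot y''$, giving (5) or (7)/(9) depending on $\epsilon$). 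The right gap is symmetric and yields (10), (4) second alternative, (5), (6) or (8).

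The main obstacle is the lengthy and repetitive nature of the case analysis: one must verify for each of the roughly eight boundary configurations that the resulting two-factor product or single conjugation really can be extracted as a sub-gd of $x$, and this extraction relies crucially on first normalising $x$ by associativity so that nested products are flattened into a single $n$-ary product (otherwise the needed grouping of a leaf $\gen_i$ with its neighbouring conjugation $\pa{z}^{\gen_i}$ would cross a product bracket). A secondary, conceptually reassuring check is that each of the ten listed forms actually does exhibit an adjacent cancellation in its own straightforward computation, so the above enumeration of all tree-local configurations that can yield adjacent inverse letters is exhaustive.
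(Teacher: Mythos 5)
Your proof is correct and follows essentially the same route as the paper: both arguments localize a cancellation to either a product junction or a conjugation boundary and then enumerate the possible local shapes of the adjacent sub-gds (using the built-in associativity to flatten nested products), the only difference being that you organize this as an induction on weight while the paper tracks the first cancellation arising when the straightforward computation is performed from the innermost operations outward. Your explicit classification of letters into leaf, opening and closing types is a clean way to see that the ten listed forms are exhaustive.
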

 \begin{proof}
 We start the straightforward computation of $x$ from the \emph{innermost} operations, and we continue until the
 first cancellation occurs.
 \begin{itemize}
  \item Suppose that the first cancellation occurs after a conjugation, taking the form $(-)^{\gen_i}$
  or $(-)^{\gen_i^{-1}}$; i.e. there is a sub-gd
  $y$ in $x$ such that the straightforward computation of $y$ involves no cancellation, but the straightforward
  computation of $\pa{y}^{\gen_i}$ or $\pa{y}^{\gen_i^{-1}}$ involves some cancellation.
  Then we are cancelling
  one instance of $\gen_i$ with one instance of $\gen_i^{-1}$, and one of these two letters is the first or the last letter of
  the formal computation of $y$. Either the gd $y$ is obtained by
  conjugating once a smaller gd (then we are in case (5)), or $y$ is obtained by multiplying
  two smaller gds (then we are in one of cases (3),(4),(6),(7),(8) and (9)), or $y$ is $\gen_i$
  (then we are in case (10)).
  \item Suppose that the first cancellation occurs after a product,  i.e. there are sub-gds
  $y_1$ and $y_2$ in $x$ such that the straightforward computations of $y_1$ and $y_2$ involves no cancellation,
  but the straightforward computation of $y_1\cdot y_2$ involves some cancellation. Then we are cancelling the last letter
  of the formal computation of $y_1$ with the first letter of the formal computation of $y_2$;
  up to reducing the size of $y_1$ and $y_2$ and using
  the associativity of the product built in Definition \ref{defn:generaliseddecomposition},
  we can assume that neither $y_1$ nor $y_2$ is itself obtained as a product of two smaller gds.
  If $y_1$ or $y_2$ is equal to $\gen_i$, we are in one of cases (3) and (4); if both $y_1$ and $y_2$ are obtained
  by conjugating a smaller gd, we are in one of cases (1) and (2).
 \end{itemize}
 \end{proof}
 
 \begin{nota}
 \label{nota:decompositiontogd}
 Let $\underline{\fg}=(\fg_1,\dots,\fg_{\lambda})$ be a decomposition of an element $\fg\in\bF^k$ with respect
 to $\FQ^k_l$ (see Definition \ref{defn:standardmove}). For each $1\leq i\leq \lambda$ there is an
 element $w_i\in\bF^k$ and a generator $\gen_{\nu_i}$, such that $\fg_i=\gen_{\nu_i}^{w_i}$ and
 either $w_i=\one$ or the first letter appearing in the reduced
 expression of $w_i$ is not $\gen_{\nu_i}^{\pm 1}$ (see Notation \ref{nota:reducedexpression} and compare with
 $w$ and $\gen_{\nu}$ in the proof of Theorem \ref{thm:FQfreePMQ}).
 We associate with $(\fg_1,\dots,\fg_{\lambda})$ the following gd of $\fg$:
  \[
  \pa{\gen_{\nu_1}}^{w_1}\cdot \pa{\gen_{\nu_2}}^{w_2} \cdot\dots\cdot\pa{\gen_{\nu_{\lambda}}}^{w_{\lambda}},
  \]
 where $(-)^{w_i}$ is the iteration of $\abs{w_i}$ conjugations.
 We thus consider each decomposition $\underline{\fg}$ of $\fg$ with respect to $\FQ^k_l$ also as a gd of $\fg$.
 \end{nota}
Using Notation \ref{nota:decompositiontogd}, we have $\left\|\underline{\fg}\right\|=\lambda+2{\abs{w_1}}+\dots+2\abs{w_{\lambda}}$.
 \begin{defn}
  \label{defn:gdtodecomposition}
  Given a gd $x$ of some element $\fg\in\bF^k$, we can find a decomposition $\underline{\fg}=(\fg_1,\dots,\fg_{\lambda})$ of
  $\fg$ with respect to $\FQ^k_k$ as follows:
  \begin{itemize}
   \item we first make a list $\pa{\gen_{\nu_1},\dots,\gen_{\nu_{\lambda}}}$ of all sub-gds of $x$ of the elementary
   form $\gen_{\nu}$, reading $x$ from left to right;
   \item we change the previous list as follows: for all $1\leq i\leq \lambda$ we
   apply to the $i$\textsuperscript{th} element $\gen_{\nu_i}$, in the natural order, all conjugations $(-)^{\gen_j^{\pm 1}}$
   which in $x$ conjugate a sub-gd containing $\gen_{\nu_i}$.
  \end{itemize} 
  We say that $\underline{\fg}$ is the decomposition of $\fg$ with respect to $\FQ^k_k$ \emph{associated with $x$}.
 \end{defn}
Consider again the element $\fg$ from Example \ref{ex:fff} and the given list of gds: the corresponding
decomposition of $\fg$ with respect to $\FQ^4_4$ are, respectively:
\begin{itemize}
\itemsep0.2cm
 \item $\pa{\gen_1,\gen_2,\gen_3}$;
 \item $\pa{\gen_2,\gen_1^{\gen_2},\gen_3}$;
 \item $\pa{\gen_3,\gen_2^{\gen_3},\gen_1^{\gen_2\gen_3}}$ for the last three gds.
\end{itemize}
Note that if we start from a decomposition $\underline{\fg}$ of an element $\fg\in\bF^k$ with respect to
$\FQ^k_l$, consider $\underline{\fg}$ as a gd of $\fg$ according to Notation \ref{nota:decompositiontogd}, and then take
again the associated decomposition with respect to $\FQ^k_k$ in the sense of Definition \ref{defn:gdtodecomposition}, we recover precisely $\underline{\fg}$: here we also use the inclusion $\FQ_l^k\subset\FQ_k^k$.

\subsection{Proof of Proposition \ref{prop:standardmove}}
The decomposition $(\tfg_1,\dots,\tfg_r)$ of $\tfg\in\bF^k$ can be seen as a gd $x_0$ of $\tfg$ as in Notation \ref{nota:decompositiontogd}.
Suppose that $x_0$ contains a sub-gd that has one of the forms (1)-(10) listed in Lemma \ref{lem:tencases}.
Then we can obtain a new gd $x_1$ of $\tfg$ by replacing the given sub-gd respectively by:

\begin{center}
\begin{tabular}{ll}
  (1) $\pa{y_1\cdot y_2}^{\gen_i}$;
&  (6) $\pa{y_1}^{\gen_i^{-1}}\cdot y_2$;\\[0.1cm]
  (2) $\pa{y_1\cdot y_2}^{\gen_i^{-1}}$;
&  (7) $y_1\cdot \pa{y_2}^{\gen_i^{-1}}$;\\[0.1cm]
  (3) $y_1\cdot \gen_i$;
&  (8) $\pa{y_1}^{\gen_i}\cdot y_2$;\\[0.1cm]
  (4) $\gen_i\cdot y_1$;
&  (9) $y_1\cdot \pa{y_2}^{\gen_i}$;\\[0.1cm]
  (5) $y_1$;
&  (10) $\gen_i$.
\end{tabular}
\end{center}

Note that $\|x_1\|<\|x_0\|$ in all cases.
We iterate such replacements until it is no longer possible, obtaining a sequence of gds $x_0,x_1,x_2,\dots$; since the weight drops at each replacement, we will reach a gd $x_n$ of $\tfg$ containing no sub-gd of the forms (1)-(10).
By Lemma \ref{lem:tencases} the straightforward computation of $x_n$ yields $\tfg=\gen_1\dots\gen_r$ without
cancellations. Since the reduced expression of $\tfg=\gen_1,\dots\gen_r\in\bF^k$ does not contain letters of the form $\gen_i^{-1}$,
no conjugation can occur in $x_n$, and we conclude that $x_n$ is just $\gen_1\cdot\gen_2\cdot\dots\cdot\gen_r$.

For $1\leq \nu\leq n$, let $\utfg_{\nu}=\pa{\tfg_{\nu,1},\dots,\tfg_{\nu,r}}$ be the decomposition of $\tfg$ with respect
to $\FQ^k_k$ associated with the gd $x_{\nu}$ of $\tfg$ (see Definition \ref{defn:gdtodecomposition}); then for all $0\leq \nu\leq n-1$
the following holds:
\begin{itemize}
 \item if passing from $x_{\nu}$ to $x_{\nu+1}$ we have used a replacement of type (1)-(10) which is \emph{not} of type (3) or (4), then
 $\utfg_{\nu}=\utfg_{\nu+1}$;
 \item if passing from $x_{\nu}$ to $x_{\nu+1}$ we have used a replacement of type (3) or (4), then $\utfg_{\nu+1}$ is obtained from $\utfg_{\nu}$ by a standard move.
\end{itemize}
It now suffices to note that $\utfg_0=(\tfg_1,\dots,\tfg_r)$ and $\utfg_n=(\gen_1,\dots,\gen_r)$; a posteriori
we also note that all decompositions $\utfg_\nu$ of $\tfg$ are actually with respect to $\FQ^k_l\subset\FQ^k_k$.

\section{Tameness properties for PMQs and the PMQ-ring}
\label{sec:normedPMQ}
In this section we introduce the notion of
norm for a PMQ, and discuss several properties that a PMQ may enjoy, such as being
\emph{augmented} and \emph{locally finite}, and, in the normed case,
being \emph{maximally decomposable}, \emph{coconnected}, \emph{pairwise determined}, and \emph{Koszul}.
We also define the PMQ-ring $R[\Q]$ of a PMQ $\Q$ with coefficients
in a commutative ring $R$, and study its basic properties.

\subsection{Normed PMQs and normed groups}
\label{subsec:normPMQgroup}
\begin{defn}
\label{defn:norm}
 A \emph{norm} on a PMQ $\Q$ is a map of PMQs $N\colon\Q\to\N$ satisfying $N^{-1}(0)=\set{\one}\subset\Q$; here the abelian monoid $\N$
 is given the abelian PMQ structure from Definition \ref{defn:PMQ}.
 A PMQ $\Q$ is \emph{normed} if it is endowed with a norm.
\end{defn}
In most cases the norm $N$ that we consider on a PMQ $\Q$ is evident from the context and left implicit, and we will only say that $\Q$ is normed. Note however that ``being normed'' is not a \emph{property} that a PMQ may or may not satisfy, but it is an additional \emph{structure}. The following are examples of PMQs with or without norms.
\begin{ex}
The natural numbers $\N$ form a normed PMQ, with unit $\one_{\N}=0$ and norm the identity. 
\end{ex}
\begin{ex}
If $\Q$ is a PMQ with norm $N\colon\Q\to\N$, we can extend $N$ to a map of PMQs $N\colon\hQ\to\N$, using that $\N$ is a complete PMQ and the universal property of the completion $\hQ$ of $\Q$. The map $N\colon\hQ\to\N$ turns out
to be a norm on $\hQ$: if $w\in\hQ$, we can represent $w$ as a product $a_1\dots a_r$ with $a_i\in\Q$; then $N(w)=N(a_1)+\dots+N(a_r)$,
so if $N(w)=0$ we must have $N(a_i)=0$ for all $1\leq i\leq r$, i.e. $a_i=\one$, and hence $w=\one\in\hQ$.
\end{ex}
Similarly as in the previous example, if $\Q$ is a PMQ with norm $N$,
there is an induced map between the enveloping groups $\cG(N)\colon\cG(\Q)\to \cG(\N)=\Z$.
\begin{ex}
\label{ex:normedset}
 The same PMQ $\Q$ may have different norms. For instance, if $\one\in S$ is a pointed set, then $S$ can be considered as a trivial abelian PMQ as in Example \ref{ex:quandle}.
 Any map of sets $N\colon S\to\N$ with $N^{-1}(0)=\set{\one}$ is a norm.
\end{ex}
\begin{ex}
 A PMQ $\Q$ may not admit any norm. For instance, if $G$ is a non-trivial
 group, then $G$ can be considered as a PMQ as in Example \ref{ex:group}.
 Then there exists no norm $N$ on $G$, as for all $\one\neq g\in G$ we would have $N(g)+N(g^{-1})=N(\one)=0$,
 but both $N(g)$ and $N(g^{-1})$ should be strictly positive integers.
\end{ex}

Definition \ref{defn:norm} is inspired by the following, standard definition.
\begin{defn}
 \label{defn:normgroup}
 A \emph{norm} on a group $G$ is a function of sets $N\colon G\to\N$ satisfying the following properties:
 \begin{itemize}
  \item $N(g)+N(h)\geq N(gh)$ for all $g,h\in G$;
  \item $N(g)=0$ if and only if $g=\one$.
 \end{itemize}
 A norm is \emph{conjugation invariant} if moreover $N(g)=N(h^{-1}gh)$ for all $g,h\in G$.
\end{defn}
The link between the notions of normed group and normed PMQ is given by the following definition, associating a normed PMQ with a normed group.
\begin{defn}
 \label{defn:Ggeo}
Let $G$ be a group with a conjugation invariant norm $N$. We define a PMQ $G\geo=G\geo_N$, called the \emph{geodesic PMQ} associated with $G$.
The underlying quandle of $G\geo$ is the underlying quandle of $G$; the partial product of $G\geo$ is only defined on pairs $(a,b)$ of elements of $G$ such that $N(ab)=N(a)+N(b)$, and coincides with the product $ab$ in $G$. The unit is the unit of $G$.
The norm $N\colon G\geo\to\N$ is defined, as a map of sets, by the norm $N\colon G\to \N$.
\end{defn}
The triangular inequality and conjugation invariance for $N$ ensure that all conditions in Definitions \ref{defn:partialmonoid} and \ref{defn:PMQ} are satisfied.
The fact that $N\colon G\geo\to\N$ is a map of PMQs, and in particular of partial monoids,
follows from the fact that the products $(a,b)\mapsto ab$ allowed in $G\geo$ are precisely
those for which $N$ is additive.

One can consider Definition \ref{defn:Ggeo} as a particular instance of Definition \ref{defn:fullsubPMQ}:
we can indeed consider the group $G\times \Z$, and let $S\subset G\times\Z$ be the subset containing
all pairs of the form $(g,N(g))$. Then $S$ contains the unit $(\one,0)$ of $G\times \Z$ and
inherits from $G\times\Z$ a structure of PMQ, which is isomorphic to $G\geo$.
Note also that $(G\geo,G)$ is naturally a PMQ-group pair (see Definition \ref{defn:PMQgrouppair}),
by considering $\Id_G$ as a map (of PMQs) $G\geo\to G$,
and by letting $G$ act on $G\geo$ by right conjugation.

\begin{defn}
\label{defn:epsilongeo}
 The map of PMQs $\Id_G\colon G\geo\to G$ gives rise to a map of groups
 $\epsilon\geo=\cG(\Id_G)\colon\cG(G\geo)\to G$.
\end{defn}
In Section \ref{sec:fSgeo} we will study in detail the PMQ $\fS_d\geo$ arising from the symmetric group $\fS_d$, endowed with the word length norm with respect to transpositions.

\subsection{Augmented and locally finite PMQs}
A necessary condition, for a PMQ to admit norms, is that it is \emph{augmented}.
\begin{defn}
 \label{defn:augmentedPMQ}
Recall Definition \ref{defn:PMQideal}. A PMQ $\Q$ is \emph{augmented} if
$\Q\setminus\set{\one}$ is an ideal of $\Q$. For an augmented PMQ $\Q$ we denote $\Q_+ =\Q\setminus\set{\one}$. If $\Q$ and $\Q'$ are augmented
PMQs, a map of PMQs $\Psi\colon\Q\to\Q'$ is \emph{augmented} if $\Psi(\Q_+)\subset\Q'_+$.
\end{defn}

\begin{ex}
 \label{ex:one0}
Let $\set{\one,0}$ be the abelian monoid with unit $\one$, such that $0\cdot 0=0$, and regard
$\set{\one,0}$ as an abelian (complete) PMQ. Then $\set{\one,0}$ is augmented, as $\set{0}$ is an ideal.
In fact, for a generic PMQ $\Q$, the following are equivalent:
\begin{itemize}
 \item $\Q$ is augmented;
 \item the map of sets $\Q\to\set{\one,0}$, given by
$a\mapsto 0$ for $a\in\Q\setminus\set{\one}$ and $\one\mapsto\one$, is a map of PMQs.
\end{itemize}
This explains the use of the word \emph{augmented} in Definition \ref{defn:augmentedPMQ}:
we think of the map $\Q\to\set{\one,0}$ as being an augmentation.
\end{ex}

\begin{ex}
 Every PMQ with trivial product is augmented. More generally, 
 a normed PMQ $\Q$ is augmented: the set $\Q\setminus\set{\one}$ contains all elements
 of strictly positive norm and is thus an ideal.
\end{ex}

\begin{defn}
\label{defn:locfin}
 A PMQ $\Q$ is \emph{locally finite} if for every $a\in\Q$ there are finitely many sequences $(a_1,\dots,a_r)$
 of elements of $\Q\setminus\set{\one}$ with $a=a_1\dots a_r$.
\end{defn}

\begin{ex}
 \label{ex:hQnotlocfin}
We give an example of a complete and normed PMQ which is not locally finite.
 Let $\Q=\bF^2$ be the free group on two elements, and consider it as a PMQ with \emph{trivial} multiplication.
 Define $N\colon\Q\to\N$ by declaring $N(a)=1$ for all $a\neq \one\in\Q$. Since every element $a\in\Q$ can be factored only as $a\one$ or $\one a$, we have that $\Q$ is locally finite.
 
 The completion $\hQ$ is however not locally finite: for instance, if $\gen_1,\gen_2$ denote the generators of
 $\bF^2$, then the element $w=\hat{\gen_1}\hat{\gen_2}\in\hQ$ can be rewritten in infinitely many ways as a product of elements of norm 1:
 \[
 \hat{\gen_1}\hat{\gen_2}=\hat{\gen_2}\hat{\gen_1}^{\hat\gen_2}=
 \hat{\gen_1}^{\hat\gen_2}\hat{\gen_2}^{\hat{\gen_1}^{\hat\gen_2}}=
 \hat{\gen_2}^{\hat{\gen_1}^{\hat\gen_2}} \pa{\hat{\gen_1}^{\hat\gen_2}}^{\hat{\gen_2}^{\hat{\gen_1}^{\hat\gen_2}}}=\dots.
 \]
\end{ex}
\begin{ex}
 Let $G$ be a finite, non-trivial group and consider $G$ as a PMQ with full product.
 Then for all $a\in G$ there are finitely many pairs $(b,c)\in G\times G$ such that $a=bc$;
 nevertheless $G$ is not locally finite, since every element $a$ can be written in infinitely
 many ways as a product $a_1\dots a_r$, for $r$ arbitrarily large.

Similarly, one can show that a locally finite PMQ must be augmented: if $\Q$ is not augmented, there exist $a,b\in\Q\setminus\set{\one}$ with $ab=\one$, and thus $\one$ can be decomposed as $ab=abab=ababab=\dots$, in infinitely many ways, so that $\Q$ is not locally finite.
\end{ex}
\begin{ex}
 Let $\Q$ be a finite, normed PMQ with norm $N$: then $\Q$ is locally finite. Indeed
 for all $n\ge0$ there are finitely many sequences $(a_1,\dots,a_r)$
 of elements of $\Q_+$ with $N(a_1)+\dots +N(a_r)\le n$; a fortiori each element
 $a\in\Q$ admits finitely many decompositions as product of elements $\neq \one$.
\end{ex}

\subsection{Coconnected PMQs}
If $\Q$ is a PMQ with norm $N$ and $a\in\Q$, we may attempt to decompose $a$ in $\Q$ as a product
$a_1\dots a_r$ of elements of norm 1.
\begin{nota}
 \label{nota:Qh}
 For a normed PMQ $\Q$ and $r\geq 0$ we denote by $\Q_r\subset\Q$ the subset of elements of $\Q$
 of norm $r$.
\end{nota}
\begin{defn}
 \label{defn:maxdecomposable}
Recall Definition \ref{defn:decomposition}, and let $a$ be an element of a normed PMQ $\Q$.
A decomposition of $a$ with respect to $\Q_1$ is a (possibly empty) sequence $(a_1,\dots,a_r)$
of elements of $\Q_1$, where $r=N(a)$, such that the product
$a_1\dots a_r$ is defined in $\Q$ and is equal to $a$.
We say that $\Q$ is \emph{maximally decomposable} if every element $a\in\Q$
admits a decomposition with respect to $\Q_1$.
\end{defn}
Not every normed PMQ is maximally decomposable: for instance the normed
PMQ $S$ from Example \ref{ex:normedset} is maximally
decomposable if and only if all elements in $S\setminus\set{\one}$
have norm 1.
\begin{defn}
\label{defn:irreducible}
 Let $\Q$ be an augmented PMQ. An element $a\in\Q_+$ is \emph{irreducible} if it cannot
 be written as a product $bc$ in $\Q$ with $b,c\in\Q_+$.
 For a generic element $a\in\Q$ we set $h(a)\in\N\cup\set{\infty}$ to be the supremum
 of $r\ge0$ such that $a$ admits a decomposition $a=a_1\dots a_r$ with $a_1,\dots,a_r\in\Q_+$;
 by convention $h(\one)=0$ and $h(a)=1$ if $a$ is irreducible.
 If $h(a)$ is finite for all $a\in\Q$, we say that the function $h\colon\Q\to\N$ is the \emph{intrinsic pseudonorm}
 of $\Q$.
\end{defn}
A condition ensuring that $h(a)$ is finite for all $a\in\Q$ is that $\Q$ is locally finite.
Note that for all $a,b\in\Q$ we have $h(ab)\ge h(a)+h(b)$, and $h(a)=0$ if and only if $a=\one$.
Note also that if $h(a)$ is finite and $a=a_1\dots a_{h(a)}$ is a decomposition
witnessing the value $h(a)$, then the elements $a_1,\dots,a_{h(a)}$ must be irreducible.

We can now characterise those augmented PMQs which admit a norm
$N$ for which they are maximally decomposable. These are precisely those augmented
PMQs satisfying all of the following properties:
\begin{itemize}
 \item for all $a\in\Q$, $h(a)$ is finite;
 \item for all $a\in\Q$ and all decompositions $a=a_1\dots a_r$ of $a$ into irreducibles,
 we have $r=h(a)$.
\end{itemize}
\begin{defn}
 \label{defn:intrinsicnorm}
If $\Q$ satisfies the above properties, then the intrinsic pseudonorm $h$
from Definition \ref{defn:irreducible} is in fact a norm,
making $\Q$ into a maximally decomposable PMQ, with $\Q_1$ being the set of irreducible elements;
moreover $h$ is the unique such norm.
We will say that $h$ is the \emph{intrinsic norm} on the augmented PMQ $\Q$.
\end{defn}
If $\Q$ does not satisfy the above properties, then there is no norm $N\colon\Q\to\N$ making
$\Q$ into a maximally decomposable PMQ.

We have discussed existence of maximal decompositions, let us now turn to uniqueness.
Recall Definition \ref{defn:standardmove}, and note that if $(a_1,\dots,a_r)$
is a decomposition of $a\in\Q$ with respect to $\Q_1$ and if $r\geq 2$, then we can apply
a standard move to it and obtain a possibly different decomposition, e.g. $(a_2,a_1^{a_2},a_3,\dots,a_r)$.
\begin{defn}
\label{defn:coconnected}
A normed PMQ $\Q$ is \emph{coconnected} if it is maximally decomposable
and if, for all $a\in\Q$ and for every pair of decompositions $(a_1,\dots,a_r)$ and $(a'_1,\dots,a'_r)$ of $a$ with respect
to $\Q_1$, there is a sequence of standard moves connecting the first decomposition to the second.
\end{defn}
\begin{ex}
\label{ex:Ncoconnected}
The abelian monoid $\N$,
regarded as a normed, abelian PMQ, is coconnected. More generally, for $n\ge0$ the subset $\Q=\set{0,\dots,n}\subset\N$ can be regarded as an abelian PMQ by virtue of Definition \ref{defn:fullsubPMQ}, with norm
given by the inclusion in $\N$: then $\Q$ is coconnected.
\end{ex}

\begin{ex}
\label{ex:Ncup1p}
An example of a PMQ which is maximally decomposable but not coconnected is
$\Q=\N\cup\set{1'}=\set{0,1,1',2,3,\dots}$, regarded as an abelian, complete PMQ as follows:
switching to additive notation, we use
the usual sum for pairs of elements of $\N\subset\Q$ and we
set $1'+0=0+1'=1'$, $1'+1'=2$ and $n+1'=1'+n=n+1$ for all $n\in\N\setminus\set{0}$.
The norm $N\colon\Q\to\N$ is the identity on $\N\subset\Q$, and $N(1')=1$.

Note that $2\in\Q$ has four decompositions with respect to $\Q_1=\set{1,1'}$, namely
$(1,1)$, $(1',1')$, $(1,1')$ and $(1',1)$; the last two are connected by a standard move, whose
effect is just swapping the two entries; however there is, for instance, no sequence of standard moves
connecting any two of the first three decompositions.
\end{ex}

We conclude the subsection by giving a convenient description of the completion of a coconnected PMQ $\Q$.
Let $\Q_{\le1}=\set{\one}\cup\Q_1\subset\Q$ be the subset
containing all elements of norm $\le1$. Then $\Q_{\le1}$ can be considered as a PMQ with trivial multiplication.
The inclusion $\Q_{\le1}\subset\Q$ is a map of PMQs, inducing a map between the completions
$\widehat{\Q_{\le1}}\to\hQ$.

\begin{lem}
\label{lem:coconnected}
If $\Q$ is coconnected, the above map $\widehat{\Q_{\le1}}\to\hQ$ is an isomorphism.
\end{lem}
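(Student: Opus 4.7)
The plan is to construct an inverse map $\Phi\colon\hQ\to\widehat{\Q_{\le 1}}$ to the natural map $\Psi\colon\widehat{\Q_{\le 1}}\to\hQ$, by using maximal decomposability to define $\Phi$ on elements of $\Q$, and coconnectedness to verify that this definition does not depend on the chosen decomposition.

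First, given $a\in\Q$, pick any decomposition $(a_1,\dots,a_r)$ of $a$ with respect to $\Q_1$ (which exists by maximal decomposability) and set $\Phi_0(a):=\hat a_1\cdots \hat a_r\in\widehat{\Q_{\le 1}}$. To see that $\Phi_0$ is well-defined, I would use coconnectedness: any two decompositions of $a$ with respect to $\Q_1$ differ by a sequence of standard moves. Since $\Q_1$ is conjugation invariant (the norm sends $a^b$ to $N(a)$, so $a^b\in\Q_1$ whenever $a\in\Q_1$), standard moves stay inside sequences of elements of $\Q_1$. The defining relations of the completion $\widehat{\Q_{\le 1}}$ include $\hat x\hat y=\hat y\widehat{x^y}$ for all $x,y\in\Q_{\le 1}$, which is precisely what makes a standard move preserve the product in $\widehat{\Q_{\le 1}}$; hence $\Phi_0(a)$ is independent of the chosen decomposition.

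Next I would verify that $\Phi_0\colon\Q\to\widehat{\Q_{\le 1}}$ is a map of PMQs. Compatibility with the partial product is immediate: if $ab$ is defined in $\Q$, concatenating decompositions of $a$ and $b$ gives a decomposition of $ab$, so $\Phi_0(ab)=\Phi_0(a)\Phi_0(b)$. Compatibility with conjugation follows from the axioms of a PMQ applied entrywise: if $a=a_1\cdots a_r$ and $b=b_1\cdots b_s$ are decompositions in $\Q_1$, then $(a_1^{b_1\cdots b_s},\dots,a_r^{b_1\cdots b_s})$ is a decomposition of $a^b$ in $\Q_1$, and the corresponding identity $\Phi_0(a^b)=\Phi_0(a)^{\Phi_0(b)}$ holds in $\widehat{\Q_{\le 1}}$ because $\widehat{\Q_{\le 1}}$ is a multiplicative quandle. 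By the universal property of the completion (Definition \ref{defn:hQ}), $\Phi_0$ extends uniquely to a morphism of complete PMQs $\Phi\colon\hQ\to\widehat{\Q_{\le 1}}$.

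Finally, I would check that $\Phi$ and $\Psi$ are mutually inverse. The composition $\Phi\circ\Psi$ fixes every generator $\hat a$ with $a\in\Q_{\le 1}$, because such $a$ has norm $\le 1$ and its only decomposition is the trivial one $(a)$ (or the empty decomposition if $a=\one$); since these generate $\widehat{\Q_{\le 1}}$, we get $\Phi\circ\Psi=\Id$. Conversely, for $a\in\Q$ with decomposition $a=a_1\cdots a_r$, the composition $\Psi\circ\Phi$ sends $\hat a$ to $\hat a_1\cdots\hat a_r$ in $\hQ$, which equals $\hat a$ because the product $a_1\cdots a_r=a$ is defined in $\Q$ and $\Q\hookrightarrow\hQ$ respects defined products. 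Since such elements generate $\hQ$, we conclude $\Psi\circ\Phi=\Id$, completing the proof.

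The main obstacle is the well-definedness of $\Phi_0$: this is precisely where the coconnectedness hypothesis enters, and it requires noting that although $\Q_{\le 1}$ has trivial product, the completion $\widehat{\Q_{\le 1}}$ has enough relations (namely $\hat x\hat y=\hat y\widehat{x^y}$) to absorb every standard move.
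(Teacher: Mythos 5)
Your proof is correct, and it takes a genuinely different route from the paper's. The paper works directly with the monoid presentation of $\hQ$: surjectivity of $\Psi$ follows from maximal decomposability, and injectivity is proved by taking an equality $a_1\dots a_r=a'_1\dots a'_r$ in $\hQ$, expanding it into a chain of elementary rewrites (merging defined products, plus the two standard moves), and invoking coconnectedness to eliminate the merge moves so that the whole chain already lives in $\widehat{\Q_{\le1}}$. You instead build an explicit two-sided inverse: coconnectedness makes ``decompose into norm-$1$ factors'' a well-defined map $\Phi_0\colon\Q\to\widehat{\Q_{\le1}}$, maximal decomposability and the PMQ axioms make it a morphism of PMQs, and the universal property of the completion does the rest. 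Your approach buys a cleaner treatment of injectivity --- the paper's step ``without loss of generality no move of type (1) takes place'' is the delicate point there, and you sidestep it entirely by delegating the relation-chasing to the adjunction; the cost is that you must verify compatibility of $\Phi_0$ with conjugation and the partial product, which you do correctly (in particular your observations that $\Q_1$ is closed under conjugation and its inverse because $N$ is conjugation-invariant, and that the relation $\hat x\hat y=\hat y\widehat{x^y}$ in $\widehat{\Q_{\le1}}$ absorbs both types of standard move, are exactly the points that need checking). Both arguments use coconnectedness at the same conceptual spot, just packaged differently.
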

\begin{proof}
 The monoid $\hQ$ is generated by the elements of $\Q$, which in turn can be obtained as products of elements
 in $\Q_{\le1}$, since $\Q$ is maximally decomposable. This implies that the map
 $\widehat{\Q_{\le1}}\to\hQ$ is surjective.
 
 To show injectivity, let $(a_1,\dots,a_r)$ and $(a'_1,\dots,a'_r)$ be sequences of elements
 of $\Q_1$ such that the products $a_1\dots a_r\in\widehat{\Q_{\le1}}$ and $a'_1\dots a'_r\in\widehat{\Q_{\le1}}$
 are sent to the same element of $\hQ$ of norm $r$. The equality $a_1\dots a_r=a'_1\dots a'_r$ in $\hQ$
implies the existence of $n\ge1$ and of sequences $(b_{i,1},\dots,b_{i,r_i})$
of elements of $\Q$, for $1\le i\le n$, such that:
 \begin{itemize}
  \item  $(b_{1,1},\dots,b_{1,r_1})=(a_1,\dots,a_r)$ and $(b_{n,1},\dots,b_{n,r_n})=(a'_1,\dots,a'_r)$;
  \item for all $1\le i\le n-1$, the sequence $(b_{i+1,1},\dots,b_{i+1,r_{i+1}})$ can be obtained
  from $(b_{i,1},\dots,b_{i,r_i})$ by one of the following moves (or their inverses):
  \begin{enumerate}
   \item replace two consecutive entries $(b_{i,j},b_{i,j+1})$ by their product $b_{i,j}b_{i,j+1}$, provided that this
   product exists in $\Q$;
   \item replace two consecutive entries $(b_{i,j},b_{i,j+1})$ by the two consecutive entries
   $(b_{i,j+1},b_{i,j}^{b_{i,j+1}})$.
  \end{enumerate}
 The hypothesis that $\Q$ is coconnected implies that, without loss of generality, we can assume that no move of type
 (1) takes place, at the cost of increasing $n$ and inserting more standard moves, i.e. those of type (2).
 If no move of type (1) occurs, all elements $b_{i,j}$ belong to $\Q_1$, and the entire procedure witnesses that also the products $a_1\dots a_r\in\widehat{\Q_{\le1}}$ and $a'_1\dots a'_r\in\widehat{\Q_{\le1}}$ are equal.
\end{itemize}
\end{proof}
Lemma \ref{lem:coconnected} can be interpreted as follows: all 
coconnected PMQs can be obtained as sub-PMQs of completions of PMQs with trivial product. Moreover, the completion of a coconnected PMQ is again coconnected.

\subsection{Pairwise determined PMQs}
Classically, a partial abelian monoid $M$ is \emph{pairwise determined} if for every
$r\geq 3$ and every sequence $(a_1,\dots,a_r)$ of elements of $M$, the sum $a_1+\dots+a_r$ is defined in $M$ if and only if for all $1\leq i<j\leq r$ the sum $a_i+a_j$ is defined. Note that if $M$, considered as an abelian PMQ,
is normed and maximally decomposable,
then it is equivalent to require the previous dichotomy for all $r\geq 3$
and only for all sequences $(a_1,\dots,a_r)$ of elements of $M$ of norm 1. We generalise this notion to PMQs.
\begin{defn}
 \label{defn:pairwisedetermined}
 Let $\Q$ be a normed and
 maximally decomposable PMQ. We say that $\Q$ is \emph{pairwise determined} if the following holds:
 for every $r\geq 3$ and every sequence $(a_1,\dots,a_r)$ of elements of $\Q_1$,
 either the product $a_1\dots a_r$ is defined in $\Q$, or there is a sequence
 of standard moves connecting $(a_1,\dots,a_r)$ to a new sequence $(a'_1,\dots,a'_r)$
 such that the product $a'_1a'_2$ is not defined in $\Q$.
\end{defn}
An example of a PMQ which is pairwise determined but not coconnected is any abelian monoid which is not coconnected, see for instance Example \ref{ex:Ncup1p}.

\begin{ex}
Let $n\ge1$ and let
$\Q=\set{0,\dots,n}\subset\N$, regarded as an abelian PMQ by virtue of Definition \ref{defn:fullsubPMQ},
with norm given by the inclusion into $\N$.
Note that every element 
admits a unique maximal decomposition as sum of 1's, hence $\Q$ is coconnected;
on the other hand, if $n\ge2$, the sequence $(1,1,\dots,1)$ of length $n+1$ is fixed by any standard move,
is not summable, yet its first two elements are summable: hence for $n\ge2$ the PMQ $\Q$ is not pairwise determined.
\end{ex}

\subsection{PMQ-ring of a PMQ}
For a group $G$ and a commutative ring $R$, there is a classical notion of group ring $R[G]$.
We generalise this notion to PMQs.
\begin{defn}
 \label{defn:PMQring}
 For a PMQ $\Q$ and a commutative ring $R$ we denote by $R[\Q]$ the \emph{PMQ-ring}
 of $\Q$ with coefficients in $R$. It is an associative $R$-algebra; as an $R$-module it is free with standard basis given by elements $\sca a$ for $a\in\Q$; for $a,b\in\Q$ we set $\sca a\sca b=\sca{ab}\in R[\Q]$ if $ab$ is defined in $\Q$, and we set $\sca a\sca b=0$ otherwise.
\end{defn}
The definition of $R[\Q]$ only depends on the underlying partial monoid of the PMQ $\Q$. If $\Q$ is augmented (see Definition \ref{defn:augmentedPMQ}), then $R[\Q]$ is an augmented
algebra: the augmentation $\epsilon\colon R[\Q]\to R$ sends $\sca{\one}\mapsto 1$ and $\sca a\mapsto 0$
for all $a\in\Q\setminus\set{\one}$.

If $\Q$ is endowed with a norm $N$, then $R[\Q]$ is a graded algebra: for $\nu\geq 0$
the degree-$\nu$ part of $R[\Q]$, denoted $R[\Q]_\nu$, is spanned by the elements $[a]$ with $a\in\Q_\nu$.

If $\Q$ is maximally decomposable, then $R[\Q]$ is generated in degree 1:
every basis element $\sca a$ can be written as a product
$\sca{a_1}\dots\sca{a_r}$ of elements of degree 1, where $(a_1,\dots,a_r)$ is a decomposition of $a$ with respect to $\Q_1$.
\begin{defn}
 \label{defn:quadraticRalgebra}
 Let $R$ be a commutative ring; a graded $R$-algebra $A$ is \emph{quadratic}
 if it is generated in degree 1 and related in degree 2. More precisely, the degree 1 part $A_1\subset A$ is a free $R$-module, and $A$ is the quotient of the free tensor algebra $T_\bullet A_1=\bigoplus_{i\ge0} (A_1)^{\otimes i}$ by a two-sided ideal
 $I\subset T_\bullet A_1$ generated by elements in $(A_1)^{\otimes 2}$.
 Here tensor products are taken over $R$.
\end{defn}
\begin{thm}
 \label{thm:RQquadratic}
 Let $\Q$ be a maximally decomposable, coconnected and pairwise determined PMQ;
 then $R[\Q]$ is a quadratic $R$-algebra.
\end{thm}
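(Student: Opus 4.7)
The strategy is to realise $R[\Q]$ as a quotient of the free tensor algebra $T_\bullet V$ on $V := R[\Q]_1$ and to show that the defining ideal is generated in degree~$2$. Maximal decomposability immediately gives surjectivity of the natural morphism of graded $R$-algebras $\phi\colon T_\bullet V\to R[\Q]$, since every basis vector $\sca{a}$ with $a\in\Q_r$ equals $\sca{a_1}\cdots\sca{a_r}$ for any decomposition $(a_1,\dots,a_r)$ of $a$ with respect to $\Q_1$. I would set $I_2:=\ker\bigl(V\otimes V\to R[\Q]_2\bigr)$ and let $J\subseteq T_\bullet V$ denote the two-sided ideal generated by $I_2$; the inclusion $J\subseteq\ker\phi$ is tautological, so the real content is the reverse inclusion $\ker\phi\subseteq J$.

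The cornerstone is the observation that each standard move (Definition \ref{defn:standardmove}) alters a basis tensor by an element of $J$: replacing a consecutive pair $\sca{a_i}\otimes\sca{a_{i+1}}$ inside a length-$r$ tensor by $\sca{a_{i+1}}\otimes\sca{a_i^{a_{i+1}}}$ (or by $\sca{a_{i+1}^{a_i^{-1}}}\otimes\sca{a_i}$) only affects a length-$2$ sub-tensor, and the PMQ axiom $a_ia_{i+1}=a_{i+1}a_i^{a_{i+1}}$, together with the simultaneous definedness of these two products, ensures that both versions of this sub-tensor have the same image in $R[\Q]_2$; hence their difference lies in $I_2$, and inserted back into a length-$r$ tensor it lies in $J$.

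With this in hand I would verify $\ker\phi\cap V^{\otimes r}\subseteq J$ by bookkeeping, one degree at a time. Write $x\in\ker\phi$ as $\sum_ic_i\tau_i$ with the $\tau_i$ pairwise distinct basis tensors, and partition the indices according to whether $\phi(\tau_i)=0$ or not. In the vanishing part the word $a_{i,1}\cdots a_{i,r}$ is undefined in $\Q$; for $r\ge3$ pairwise determinedness produces a sequence of standard moves converting $\tau_i$ into a tensor whose first two entries are not multipliable, and this target tensor lies in $I_2\otimes V^{\otimes(r-2)}\subseteq J$, while the case $r=2$ is immediate since $\sca{a_1}\otimes\sca{a_2}\in I_2$ directly. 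In the non-vanishing part I group the $\tau_i$'s by the common image $b_i:=a_{i,1}\cdots a_{i,r}\in\Q_r$: by coconnectedness any two tensors in the same group are linked by standard moves and therefore mutually congruent modulo $J$, while the $R$-linear independence of $\set{\sca{b}\mid b\in\Q_r}$ in $R[\Q]$ forces the sum of the coefficients $c_i$ within each group to vanish; each group thus contributes an element of $J$ to $x$, completing the argument.

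The main subtlety is the interplay of the three hypotheses in this last step. Pairwise determinedness is what trades an undefined $r$-fold product for a pairwise defect, coconnectedness is what guarantees that different maximal decompositions of the same element differ only by standard moves, and maximal decomposability is what ensures that $V=R[\Q]_1$ suffices as generators. Weakening any one of them would admit relations of degree greater than~$2$ that do not arise from $I_2$, which would break quadraticity; conversely, once one has recognised that all three furnish degree-$2$ witnesses for potential higher-degree relations, the verification of quadraticity becomes the elementary combinatorial bookkeeping sketched above.
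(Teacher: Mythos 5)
Your proposal is correct and follows essentially the same route as the paper's proof: a surjection from the tensor algebra on the degree-one part (via maximal decomposability), the observation that standard moves change a monomial by an element of the degree-two ideal, coconnectedness to handle relations between defined products, and pairwise determinedness to reduce undefined products to a degree-two obstruction. The only difference is presentational: you carry out explicitly the coefficient bookkeeping showing that the kernel is spanned by vanishing monomials and differences of monomials with equal image, which the paper simply asserts.
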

\begin{proof}
Let $R\left<\Q_1\right>$ denote the free associative $R$-algebra on the set $\Q_1$,
which is a graded $R$-algebra by putting the generators in degree 1. There is a natural
map of graded $R$-algebras $\fu\colon R\left<\Q_1\right>\to R[\Q]$ given by $a\mapsto\sca a$
for $a\in\Q_1$. Since $\Q$ is maximally decomposable, the map $\fu$ is surjective;
note also that $\fu$ is bijective in degrees 0 and 1.
We want to show that the kernel of $\fu$ is generated, as a two-sided ideal
of $R\left<\Q_1\right>$, by elements of degree 2. Let $\ker(\fu)_\nu$ denote the degree $\nu$
part of $\ker(\fu)$. Then $\ker(\fu)_\nu$ is generated as $R$-module by the following elements:
\begin{enumerate}
 \item monomials $\left<a_1,\dots,a_h\right>$, such that the product $a_1\dots a_h$ is not defined in $\Q$;
 \item differences of monomials $\left<a_1,\dots,a_h\right>-\left<a'_1,\dots,a'_h\right>$,
 such that the products $a_1\dots a_h$ and $a'_1\dots a'_h$ are defined and equal in $\Q$.
\end{enumerate}
Let $I\subset R\left<\Q_1\right>$ be the two-sided ideal generated by $\ker(\fu)_2$: we want to prove
that the inclusion $I\subseteq\ker(\fu)$ is an equality.
First, let $(a_1,\dots,a_r)$ and $(a'_1,\dots a'_r)$ be two sequences of elements of $\Q_1$ that differ by
a standard move, swapping the elements in positions $j$ and $j+1$ (in particular $a_i=a'_i$ for all $i\neq j,j+1$).
Then $\left<a_j,a_{j+1}\right>-\left<a'_j,a'_{j+1}\right>\in\ker(\fu)_2$, in both of the following cases:
\begin{itemize}
 \item if $a_ja_{j+1}$ is defined, then $\left<a_j,a_{j+1}\right>-\left<a'_j,a'_{j+1}\right>$ is an element of type (2);
 \item if $a_ja_{j+1}$ is not defined, then $\left<a_j,a_{j+1}\right>-\left<a'_j,a'_{j+1}\right>$  is a difference of elements of type (1).
\end{itemize}
It follows that the difference $\left<a_1,\dots,a_r\right>-\left<a'_1,\dots, a_r'\right>$ is equal to
\[
 \left<a_1,\dots,a_{j-1}\right> \big( \left<a_j,a_{j+1}\right>-\left<a'_j,a'_{j+1}\right> \big)
 \left< a_{j+2},\dots a_r\right> \in I.
\]
Using that $\Q$ is coconnected, we can express every element of $\ker(\fu)_r$ of type (2)
as a linear combination (in fact a sum) of elements of the form $\left<a_1,\dots,a_r\right>-\left<a'_1,\dots, a'_r\right>$ with $(a_1,\dots,a_r)$ and $(a'_1,\dots a'_r)$ differing for a single standard move;
this shows that all elements of type (2) lie in $I$.

By the previous argument and the hypothesis that $\Q$ is pairwise determined,
every element $\left<a_1,\dots,a_r\right>$ of type (1)
can be written as a sum of an element in $I$ and another element of the form
$\left<a'_1,\dots,a'_r\right>$, where we can assume that the product $a'_1a'_2$ is not defined
in $\Q$. Then $\left<a'_1,a'_2\right>\in\ker(\fu)_2$, and thus 
\[
\left<a'_1,\dots,a'_r\right>= \left<a'_1,a'_2\right>\left<a'_3,\dots,a'_r\right>\in I.
\]
\end{proof}

\subsection{Invariants of the adjoint action}
Recall from Subsection \ref{subsec:adjointaction} the adjoint action of $\cG(\Q)$ on $\Q$: it induces an action
of $\cG(\Q)$ on $R[\Q]$ by ring automorphisms.
\begin{defn}
\label{defn:caQ}
 We define $\cA(\Q)$ as the sub-$R$-algebra $R[\Q]^{\cG(\Q)}\subseteq R[\Q]$ of invariant elements under the adjoint action.
\end{defn}
Note that at least the copy of $R\subset R[\Q]$ spanned by the element $\sca{\one}$ is contained in $\cA(\Q)$;
therefore $\cA(\Q)$ is a unital $R$-algebra.
\begin{defn}
\label{defn:conj}
Let $\Q$ be a PMQ, and recall from Definition \ref{defn:quandle} the notion of conjugacy class.
If $S\in\conj(\Q)$ is a finite conjugacy class, we denote
\[
 \sca{S}:=\sum_{a\in S}\sca{a}\in R[\Q].
\]
\end{defn}
Note that the elements $\sca{S}$, for $S$ ranging among finite conjugacy classes of $\Q$, exhibit $\cA(\Q)$ as a free $R$-module.
This follows from the observation that a generic element $x\in R[\Q]$ takes the form $x=\sum_{a\in\Q}\lambda_a\sca{a}$, where almost all coefficients $\lambda_a\in R$ are zero. The element $x$ belongs to $\cA(\Q)$ if and only if $\lambda_a=\lambda_b$ whenever $\conj(a)=\conj(b)$; hence for every conjugacy class $S$ there is $\lambda_S\in R$ such that
$x=\sum_{S\in\conj(\Q)}\sum_{a\in S}\lambda_S\sca a$. Since $x$ is a finite linear combination of elements $\sca{a}$,
$\lambda_S$ vanishes whenever $S$ is infinite, and moreover almost all coefficients $\lambda_S$ vanish.

\begin{lem}
\label{lem:cominv}
The ring $\cA(\Q)$ is contained in the centre of $R[\Q]$; in particular $\cA(\Q)$ is a commutative ring.
\end{lem}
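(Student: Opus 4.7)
The plan is to show that each basis element $\sca{S}$ of $\cA(\Q)$, for $S$ a finite conjugacy class in $\Q$, is already central in $R[\Q]$; since such elements $R$-linearly span $\cA(\Q)$ (as noted before the lemma), centrality of $\cA(\Q)$ follows.

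To do this, I would fix a finite conjugacy class $S \in \conj(\Q)$ and an element $b \in \Q$, and compare the two products
\[
\sca{S}\cdot\sca{b}=\sum_{a\in S,\,ab\text{ defined}}\sca{ab},\qquad \sca{b}\cdot\sca{S}=\sum_{a\in S,\,ba\text{ defined}}\sca{ba}.
\]
The key algebraic input is the first compatibility axiom in Definition \ref{defn:PMQ}: for all $a,b\in\Q$, the product $ab$ is defined if and only if $ba^b$ is defined, and in that case $ab=ba^b$. Combined with the fact that $(-)^b\colon\Q\to\Q$ is a bijection (Definition \ref{defn:quandle}(1)) which preserves the conjugacy class $S$, this tells us that the change of variables $a\mapsto a'=a^b$ restricts to a bijection on $S$ sending the indexing set of the first sum onto the indexing set of the second sum, and sending the summand $\sca{ab}$ to $\sca{ba'}$. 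Therefore $\sca{S}\cdot\sca{b}=\sca{b}\cdot\sca{S}$.

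Having shown that every $\sca{S}$ commutes with every basis vector $\sca{b}$, I would conclude by $R$-linearity that $\sca{S}$ is central in $R[\Q]$, and finally that the $R$-span $\cA(\Q)$ of these elements lies in the centre; in particular $\cA(\Q)$ is a commutative ring. The only step that requires genuine care, rather than bookkeeping, is verifying that the bijection $(-)^b\colon S\to S$ matches the two definedness conditions $\set{a:ab\text{ defined}}$ and $\set{a':ba'\text{ defined}}$ of the two sums; this is exactly what the ``$ab=ba^b$'' axiom delivers, so there is no real obstacle.
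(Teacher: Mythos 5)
Your proposal is correct and follows essentially the same route as the paper: the paper likewise reduces to showing $\sca{S}\sca{b}=\sca{b}\sca{S}$ for each finite conjugacy class $S$ and each $b\in\Q$, invokes the identity $\sca{a}\sca{b}=\sca{b}\sca{a^b}$ in $R[\Q]$ (which packages exactly the definedness equivalence you highlight), and then reindexes via the bijection $(-)^b\colon S\to S$. The only cosmetic difference is that you track the definedness conditions in the index sets explicitly, whereas the paper absorbs them into the statement that $\sca{a}\sca{b}=\sca{b}\sca{a^b}$ holds in $R[\Q]$ for all $a,b$.
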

\begin{proof}
It suffices to prove that for all finite conjugacy class $S\in\conj(\Q)$ and all $b\in\Q$ the equality $\sca{S}\sca{b}=\sca{b}\sca{S}$ holds in $R[\Q]$. The first product is equal to $\sum_{a\in S}\sca{a}\sca{b}$; by Definition \ref{defn:PMQring}
and the axioms of PMQ, we have $\sca{a}\sca{b}=\sca{b}\sca{a^b}$ for all $a,b\in \Q$, so we may rewrite
the first product as $\sum_{a\in S}\sca{b}\sca{a^b}$. It suffices now to remember that $(-)^b\colon\Q\to\Q$ is a bijection and restricts to a bijection $(-)^b\colon S\to S$, hence the latter formula equals the second product.
\end{proof}

\subsection{Koszul PMQs}
Let $A$
be a graded, associative $R$-algebra, and assume that $A_i$ is a finitely generated, free $R$-module
for all $i\ge0$. The algebra $A$ is \emph{connected}
if $A_0=R$; in this case $A$ admits a canonical augmentation
$A\to R$, making in particular $R$ into a left and right $A$-module concentrated in degree 0.
The groups $\Ext_{A}^*(R,R)$ inherit a grading from $A$, so that for $j\geq0$ there is
a decomposition $\Ext^j_{A}(R,R)=\bigoplus_{\nu\geq0}\Ext_{A}^{j,\nu}(R,R)$. A connected $R$-algebra $A$ is \emph{Koszul} if for all $j\ge0$, $\Ext_{A}^j(R,R)$ is a finitely generated, free $R$-module concentrated in degree $j$, i.e. $\Ext_{A}^j(R,R)=\Ext_{A}^{j,j}(R,R)$. Recall that if $A$ is Koszul, then it is also quadratic.

\begin{defn}
 \label{defn:PMQkoszul}
 A normed, locally finite PMQ $\Q$ is \emph{Koszul} (over $R$) if $R[\Q]$, as a graded, connected $R$-algebra, is Koszul.
\end{defn}
By Theorem \ref{thm:RQquadratic}, if $\Q$ is a normed, locally finite, coconnected, pairwise determined and Koszul PMQ, then $R[\Q]$ is a (quadratic)
Koszul $R$-algebra and therefore $\Ext_{R[\Q]}^*(R,R)$
is isomorphic, as a graded $R$-algebra, to the \emph{dual} quadratic algebra of $R[\Q]$. More precisely,
on the one hand
$R[\Q]$ is isomorphic to the free associative $R$-algebra with following generators and relations:

\begin{tabular}{cl}
 \textbf{Generators} & {For all $a\in\Q_1$ there is a generator $\sca{a}$ in degree 1.}\\[0.1cm]
 \textbf{Relations}  & For all $(a,b)\in\Q_1\times\Q_1$ there is a relation $\sca{a}\sca{b}=\sca{b}\sca{a^b}$;\\
 &if $ab$ is not defined in $\Q$, there is also a relation $\sca{a}\sca{b}=0$ .
\end{tabular}

On the other hand $\Ext_{R[\Q]}^{*}(R,R)$ is isomorphic to the free associative $R$-algebra with the following generators and relations:

\begin{tabular}{cl}
 \textbf{Generators} &  For all $a\in\Q_1$ there is a generator $\sca{a}'$ in degree 1.\\[0.1cm]
 \textbf{Relations} & For all $c\in\Q_2$ there is a relation $\sum \sca{a}'\sca{b}'=0$: here the sum\\
 &is extended
 over all pairs $(a,b)\in\Q_1\times\Q_1$ satisfying $ab=c$.
\end{tabular}

Note that the sum $\sum \sca{a}'\sca{b}'=0$ is finite because we assume $\Q$ locally finite.
\begin{ex}
Let $\Q=\N=\set{0,1,2,\dots}$ with the identity norm; then $\Q$ is a Koszul PMQ, as $R[\N]\cong R[x]$
is a Koszul algebra.
\end{ex}
\begin{ex}
\label{ex:1ton}
Let $n\ge1$ and let $\Q=\set{0,1,\dots,n}\subset\N$, regarded as a PMQ by virtue of Definition
\ref{defn:fullsubPMQ}, with norm given by the inclusion in $\N$.
Then $R[\Q]\cong R[x]/x^{n+1}$ is not a quadratic algebra unless $n=1$;
for $n=1$ we have that $R[x]/x^2$ is a Koszul algebra.
Therefore $\Q$ is Koszul if and only if $n=1$.
\end{ex}

\begin{ex}
\label{ex:segre}
Let $\Q=\set{\one,a,a',b,b',c}$ be the abelian PMQ in which the only non-trivial, defined multiplications
are $ab=a'b'=c$. Define a norm $N\colon\Q\to\N$ by setting $N(a)=N(a')=N(b)=N(b')=1$ and $N(c)=2$.

Then the completion $\hQ$ of $\Q$ is the free abelian monoid with generators $a,a',b,b'$, modulo the relation $ab=a'b'$, again considered as an abelian PMQ, and the norm $N$ extends to $\hQ$. Note that $\hQ$ is not coconnected, as we still have $c=ab=a'b'$.

We have that $R[\hQ]\cong R[x,y,x',y']/(xy-x'y')$ is isomorphic to the Segre subalgebra
$R[s_1t_1,s_1t_2,s_2t_1,s_2t_2]$ of the polynomial ring $R[s_1,s_2,t_1,t_2]$ in 4 variables, which is known
to be Koszul. Hence $\hQ$ is Koszul. 
\end{ex}

\part{Simplicial Hurwitz spaces}
\section{Double bar constructions in braided monoidal categories}
\label{sec:barconstructions}
In this section we collect some general facts about algebra objects in braided monoidal categories.
The main goal is to define, for a pair of commutative algebras $(A,B,f)$ in a braided monoidal category
$\bfA$, the double bar construction $B_{\bullet,\bullet}(A,B,f)$, which is a bisimplicial object
in $\bfA$. The material of this section is standard and is included for the sake of completeness.
\begin{nota}
For a category $\bfA$ we denote by $s\bfA$ the category of simplicial objects in $\bfA$ (i.e. functors $\Delta^{op}\to\bfA$); similarly $ss\bfA$ denotes the category of bisimplicial objects in $\bfA$.
\end{nota}
\subsection{Algebras in monoidal categories}
In this subsection we denote by $\bfA$ a monoidal category, with monoidal product $-\otimes-$ and unit object $\bf1$. We shall neglect all issues related to associators and unitors.
\begin{defn}
\label{defn:AlgbfA}
We denote by $\Alg(\bfA)$ the category of algebras (or unital monoid objects) in $\bfA$. An algebra $A$ is endowed with a multiplication $\mu_A\colon A\otimes A\to A$ and a unit $\eta_A\colon\bfone\to \bfA$,
and the following identities are required:
\begin{itemize}
 \item $\mu_A\circ(\mu_A\otimes\Id_A)=\mu_A\circ(\Id_A\otimes\mu_A)\colon A\otimes A\otimes A\to A$;
 \item $\mu_A\circ(\eta_A\otimes\Id_A)=\mu_A\circ(\Id_A\otimes\eta_A)=\Id_A\colon A\to A$.
\end{itemize}
A morphism $f\colon A\to B$ is an algebra morphism if it satisfies the following identities:
\begin{itemize}
 \item $\mu_B\circ(f\otimes f)=f\circ\mu_B\colon A\otimes A\to B$;
 \item $f\circ\eta_A=\eta_B\colon\bfone\to B$.
\end{itemize}
An algebra pair in $\bfA$ is the datum $(A,B,f)$ of two algebras $A,B\in\Alg(\bfA)$ and a morphism
of algebras $f\colon A\to B$.
Algebra pairs form a category $\Alg(\bfA)^{[0,1]}$, which is the arrow category of $\Alg(\bfA)$.

Let $A\in\Alg(\bfA)$.
A left $A$-module $B$ is endowed with a multiplication $\mu_{A,B}\colon A\otimes B\to B$, and the following identities are required:
 \begin{itemize}
  \item $\mu_{A,B}\circ(\mu_A\otimes\Id_B)=\mu_{A,B}\circ(\Id_A\otimes\mu_{A,B})\colon A\otimes A\otimes B\to B$;
  \item $\mu_{A,B}\circ(\eta_A\otimes \Id_B)=\Id_B\colon B\to B$.
 \end{itemize}
The notion of right $A$-module is defined in an analogous way.
An $A$-bimodule $B$ is endowed with both left and right $A$-module structures, satisfying the following:
\begin{itemize}
 \item $\mu_{B,A}\circ(\mu_{A,B}\otimes\Id_A)=\mu_{A,B}\circ(\Id_A\otimes\mu_{B,A})\colon A\otimes B\otimes A\to B$.
\end{itemize}
\end{defn}

\begin{nota}
 \label{nota:algebrapairbimodule}
If $(A,B,f)\in\Alg(\bfA)^{[0,1]}$ is an algebra pair, we consider $B$ as an $A$-bimodule
with structure maps $\mu_{A,B}:=\mu_B\circ(f\otimes\Id_B)$ and $\mu_{B,A}:=\mu_B\circ(\Id_B\otimes f)$.
\end{nota}

\subsection{Algebras in braided monoidal categories}
Assume now that $\bfA$ is braided monoidal; for two objects $A,B\in\bfA$ we denote by $\braiding_{A,B}\colon A\otimes B\to B\otimes A$ the braiding.
The following construction makes $\Alg(\bfA)$ into a monoidal category.
\begin{defn}
 \label{defn:tensoralgebra}
 Let $\bfA$ be a braided monoidal category with tensor product $-\otimes-$,
 unit object $\bfone_{\bfA}$ and braiding $\braiding_{-,-}$, and let
 $A,B\in\Alg(\bfA)$. We endow $A\otimes B$ with a structure of algebra in $\bfA$, by letting the product $\mu_{A\otimes B}\colon (A\otimes B)\otimes (A\otimes B)\to A\otimes B$ be the composition
 \[
 \begin{tikzcd}[column sep=6em]
  A\otimes B\otimes A\otimes B \ar[r,"\Id_A\otimes\braiding_{B,A}\otimes\Id_B"] &
  A\otimes A\otimes B\otimes B \ar[r,"\mu_A\otimes\mu_A"] & A\otimes B.
  \end{tikzcd}
 \]
Similarly, the unit $\eta_{A\otimes B}\colon \bfone_{\bfA}\to A\otimes B$
is defined as $\bfone_{\bfA}\cong \bfone_{\bfA}\otimes \bfone_{\bfA}\overset{\eta_A\otimes\eta_B}\to A\otimes B$.
\end{defn}
If $f\colon A\to A'$ and $g\colon B\to B'$ are algebra morphisms, then also
$f\otimes g\colon A\otimes B\to A'\otimes B'$ is an algebra morphism.
Thus the previous definition makes $\Alg(\bfA)$ into a monoidal category.
As a consequence, if $(A,B,f)$ and $(A',B',f')$ are algebra pairs in $\bfA$, then $f\otimes f'\colon A\otimes A'\to B\otimes B'$ is a morphism of algebras, hence $(A\otimes A',B\otimes B',f\otimes f')$ is also an algebra pair.
In fact we obtain a monoidal structure on the category $\Alg(\bfA)^{[0,1]}$:
it is actually a general fact that the arrow category of a monoidal category (in this case $\Alg(\bfA)$) is also a monoidal
category.

Assuming that $\bfA$ is braided monoidal allows us also to define the notion of 
commutative algebra (and similarly of commutative algebra pair).
\begin{defn}
\label{defn:commutative}
Let $\bfA$ be a braided monoidal category. An algebra
$A\in\Alg(\bfA)$ is \emph{commutative}
if the following diagram commutes:
  \[
   \begin{tikzcd}
    A\otimes A\ar[r,"\braiding_{A,A}"]\ar[dr,"\mu_A"'] & A\otimes A\ar[d,"\mu_A"]\\
    &A.
   \end{tikzcd}
  \]
Similarly, $(A,B,f)\in\Alg(\bfA)^{[0,1]}$ is \emph{commutative}
if both $A$ and $B$ are commutative.
\end{defn}

\subsection{Single bar construction}
We will define the bar construction in the setting of a monoidal category $\bfA$ and an algebra
pair $(A,B,f)$ in $\bfA$; the most general construction would use a left $A$-module
and a right $A$-module which are possibly distinct, but we will not need this level of generality
in this article.
\begin{defn}
\label{defn:barconstruction}
 Let $\bfA$ be a monoidal category and let $(A,B,f)$ be an algebra
 pair in $\bfA$. Recall Notation \ref{nota:algebrapairbimodule}.
 We define a simplicial object $B_\bullet (A,B,f) \in s\bfA$:
 \begin{itemize}
  \item for $n\ge0$, the $n$\textsuperscript{th} object $B_n(A,B)$ is given by $B\otimes A^{\otimes n}\otimes B$;
  \item for $n\ge1$ and $1\le i\le n-1$, the $i$\textsuperscript{th} face map $d_i\colon B_n(A,B)\to B_{n-1}(A,B)$ is given by
  $\Id_B\otimes \Id_A^{\otimes i-1}\otimes \mu_A\otimes \Id_A^{\otimes n-i-1}\otimes \Id_B$;
  we also set $d_0=\mu_{B,A}\otimes\Id_A^{\otimes n-1}\otimes \Id_B$ and $d_n=\Id_B\otimes\Id_A^{\otimes n-1}\otimes \mu_{A,B}$.
  \item for $n\ge0$ and $0\le i\le n$, the $i$\textsuperscript{th} degeneracy map $s_i\colon B_n(A,B)\to B_{n+1}(A,B)$ is given by
  $\Id_B\otimes\Id_A^{\otimes i}\otimes \eta_A\otimes \Id_A^{\otimes n-i}\otimes \Id_B$.
 \end{itemize}
\end{defn}
By Definition \ref{defn:tensoralgebra}, the objects
$B_n (A,B,f)=B\otimes A^{\otimes n}\otimes B$ can be naturally regarded as algebras in $\bfA$. However, in order to enhance $B_\bullet (A,B,f)$ to a simplicial object in $\Alg(\bfA)$, we need some additional hypothesis on $(A,B,f)$.

\begin{lem}
\label{lem:sAlg}
 Let $\bfA$ be a braided monoidal category and $(A,B,f)$ be a \emph{commutative} algebra pair; then
 $B_\bullet (A,B,f)$ can be naturally regarded as an object in $s\Alg(\bfA)$.
 Moreover the assignment
 \[
 B_n(f,\Id_B):=\Id_B\otimes f^{\otimes n}\otimes \Id_B \colon B_n(A,B,f)\to B_n(B,B,\Id_B)
 \]
 gives a morphism in $s\Alg(\bfA)$, so that
 we obtain a simplicial algebra pair
 \[
 (B_\bullet (A,B,f),B_\bullet(B,B,\Id_B),B_\bullet(f,\Id_B)).
 \]
\end{lem}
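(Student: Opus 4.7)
The plan is to reduce the statement to a single elementary observation: if $C$ is a commutative algebra in a braided monoidal category, then the structure maps $\mu_C\colon C\otimes C\to C$ and $\eta_C\colon\bfone\to C$ are themselves morphisms of algebras, where $C\otimes C$ carries the tensor product algebra structure of Definition \ref{defn:tensoralgebra}. The check for $\eta_C$ is formal (using the unitor of $\bfA$). The check for $\mu_C$ is the standard computation: one verifies that $\mu_C\circ\mu_{C\otimes C}$ agrees with $\mu_C\circ(\mu_C\otimes\mu_C)$, and the braiding $\braiding_{C,C}$ appearing inside $\mu_{C\otimes C}$ is absorbed by associativity of $\mu_C$ together with commutativity (Definition \ref{defn:commutative}). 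One also records that the class of algebra morphisms is closed under composition and under tensor product, which is just the functoriality of the monoidal structure on $\Alg(\bfA)$.

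With this in hand, I would check that each simplicial structure map of $B_\bullet(A,B,f)$ is a morphism of algebras in $\bfA$. The inner faces $d_i=\Id_B\otimes\Id_A^{\otimes i-1}\otimes\mu_A\otimes\Id_A^{\otimes n-i-1}\otimes\Id_B$, for $1\le i\le n-1$, are tensor products of algebra morphisms, since $\mu_A$ is an algebra morphism by commutativity of $A$. The outer face $d_0=\mu_{B,A}\otimes\Id_A^{\otimes n-1}\otimes\Id_B$ is a tensor product in which the non-identity factor $\mu_{B,A}=\mu_B\circ(\Id_B\otimes f)$ is a composition of the algebra morphisms $\Id_B\otimes f$ and $\mu_B$ (the latter an algebra morphism by commutativity of $B$); the face $d_n$ is symmetric. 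The degeneracy maps $s_i=\Id_B\otimes\Id_A^{\otimes i}\otimes\eta_A\otimes\Id_A^{\otimes n-i}\otimes\Id_B$ use only the unit $\eta_A$, which is an algebra morphism. This shows that $B_\bullet(A,B,f)$ lifts to $s\Alg(\bfA)$.

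For the second assertion, the map $B_n(f,\Id_B)=\Id_B\otimes f^{\otimes n}\otimes\Id_B$ is a tensor product of algebra morphisms and is therefore an algebra morphism in each simplicial degree. Its compatibility with faces and degeneracies is a direct diagram chase that only uses the identities $f\circ\mu_A=\mu_B\circ(f\otimes f)$ and $f\circ\eta_A=\eta_B$ together with functoriality of $\otimes$; the same kind of routine check also confirms that $(B_\bullet(A,B,f),B_\bullet(B,B,\Id_B),B_\bullet(f,\Id_B))$ assembles into a simplicial algebra pair, by inspecting that the two structure maps of the pair are sent to each other degreewise. I do not anticipate any genuine obstacle: the only step with real content is the opening observation about $\mu_C$ being an algebra morphism, and everything else is naturality and tensor-product bookkeeping.
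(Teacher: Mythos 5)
Your proposal is correct and follows essentially the same route as the paper: both reduce the claim to the observation that $\mu_A$, $\eta_A$, $\mu_{A,B}$, $\mu_{B,A}$ are algebra morphisms (using commutativity to absorb the braiding in $\mu_{A\otimes A}$) and then note that all face, degeneracy, and comparison maps are tensor products and composites of these. Your factorisation $\mu_{B,A}=\mu_B\circ(\Id_B\otimes f)$ is a minor, clean variant of the paper's direct check of $\mu_{B,A}$ via naturality of the braiding, but the argument is the same in substance.
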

\begin{proof}
 We first have to check that all face maps
 $d_i\colon B_n(A,B)\to B_{n-1}(A,B)$ and degeneracy maps $s_i\colon B_n(A,B)\to B_{n+1}(A,B)$,
 which a priori are only morphisms in $\bfA$, are in fact morphisms in $\Alg(\bfA)$.
 First, note that since $\mu_A$ is associative and is invariant under precomposition with $\braiding_{A,A}$,
 the following diagram commutes:
  \[
  \begin{tikzcd}
   A\otimes A\otimes A\otimes A \ar[d,"\mu_A\otimes\mu_A"]\ar[r,"\mu_{A\otimes A}"] & A\otimes A\ar[d,"\mu_A"]\\
   A\otimes A\ar[r,"\mu_A"] &A.
  \end{tikzcd}
 \]
This, together with the observation $\mu_A\circ\eta_{A\otimes A}=\eta_A$,
implies that $\mu_A\colon A\otimes A\to A$ is a map of algebras in $\bfA$.
Similarly one can check that $\eta_A\colon\bfone\to A$,
$\mu_{A,B}\colon A\otimes B\to B$, $\mu_{B,A}\colon B\otimes A\to B$
are morphisms in $\Alg(\bfA)$, using that $B$ is commutative and that, by the naturality
of the braiding, also the following diagram commutes:
\[
 \begin{tikzcd}
  A\otimes A\ar[r,"\braiding_{A,A}"]\ar[d,"f\otimes f"]&A\otimes A\ar[d,"f\otimes f"]\\
  B\otimes B\ar[r,"\braiding_{B,B}"] & B\otimes B.
 \end{tikzcd}
\]

It follows now directly from definition \ref{defn:barconstruction} that both the face map
$d_i\colon B_nA\to B_{n-1}A$
and the degeneracy map $s_i\colon B_nA\to B_{n+1}A$
are monoidal products (in the monoidal category $\Alg(\bfA)$) of morphisms
in $\Alg(\bfA)$, and hence they are morphisms in $\Alg(\bfA)$ as well, i.e. maps of algebras.
This makes $B_\bullet (A,B)$ into a simplicial algebra in $\bfA$, or equivalently
into an algebra in $s\bfA$; the category $s\bfA$ is braided monoidal,
with levelwise monoidal product and braiding.

The same argument shows that $B_\bullet(B,B,\Id_B)$ is a simplicial algebra in $\bfA$.
To check that $B_\bullet(f,\Id_B)$ is a morphism of simplicial algebras, it suffices
to check that for all $n\ge0$ the morphism $B_n(f,\Id_B)$ is a morphism of algebras: this is evident,
as $B_n(f,\Id_B)$ is a tensor product of morphisms of algebras.
\end{proof}

\subsection{Double bar construction}
Having a simplicial algebra pair in $\bfA$, we can apply again, levelwise, the bar construction from Definition \ref{defn:barconstruction}.

\begin{defn}
 \label{defn:doublebarconstruction}
 Let $(A,B,f)\in\Alg(\bfA)^{[0,1]}$ be a commutative algebra pair in a braided monoidal category.
 Regard $(B_\bullet (A,B,f),B_\bullet(B,B,\Id_B),B_\bullet(f,\Id_B))$
 as an object in $s\Alg(\bfA)^{[0,1]}$. We denote by $B_{\bullet,\bullet}(A,B,f)$ the bisimplicial object
 in $\bfA$ obtained by applying levelwise the bar construction: explicitly, we have
 \[
 B_{p,q}(A,B,f)=B_p\pa{B_q(A,B,f),B_q(B,B,\Id_B),B_q(f,\Id_B)},
 \]
 where we use that $\pa{B_q(A,B,f),B_q(B,B,\Id_B),B_q(f,\Id_B)}\in\Alg(\bfA)^{[0,1]}$.

  For $0\leq i\le p$ and $q\ge0$ with $p\neq 0$, we denote by
  $
  d^{\hor}_i\colon B_{p,q}(A,B,f)\to B_{p-1,q}(A,B,f)
  $
  the
 $i$\textsuperscript{th} \emph{horizontal} face map.
 For $p\ge0$ and $0\le j\le q$ with $q\neq 0$, we denote by $d^{\ver}_j\colon B_{p,q}(A,B,f)\to B_{p,q-1}(A,B,f)$ the
 $j$\textsuperscript{th} \emph{vertical} face map. We use a similar notation $s^{\hor}_i$ and $s^{\ver}_j$ for the \emph{horizontal}
 and \emph{vertical} degeneracy maps.
\end{defn}
Note that, even if $(A,B,f)$ is a commutative algebra pair, in general $B\otimes A^{\otimes n}\otimes B$
and $B^{\otimes n+2}$ are not commutative algebras:
thus, in general, $B_\bullet(A,B,f)$ is not a simplicial object in \emph{commutative} algebra pairs, and
$B_{\bullet,\bullet}(A,B,f)$ is not a bisimplicial object in $\Alg(\bfA)$, so that no further bar construction is available.
For completeness, we remark that if $(A,B,f)\in\Alg(\bfA)$ is a commutative algebra pair
and the following equalities hold (they are for instance automatic if $\bfA$ is symmetric monoidal)
\[
\braiding_{A,A}\circ\braiding_{A,A}=\Id_{A\otimes A};\quad\quad
\braiding_{A,B}\circ\braiding_{B,A}=\Id_{B\otimes B};\quad\quad
\braiding_{B,A}\circ\braiding_{A,B}=\Id_{A\otimes B},
\]
then $(B_\bullet(A,B,f),B_\bullet(B,B,\Id_B),B_\bullet(f,\Id_B))$ is a commutative algebra pair
in $s\bfA$ (which is endowed with the levelwise braided monoidal structure) satisfying the analogous of the three equalities above,
and one can iterate arbitrarily many times the bar construction.
We will be most interested in examples in which the three equalities above do not hold
(in fact, the first equality does not hold), so we will only focus on the double bar construction.

\section{Simplicial Hurwitz spaces}
\label{sec:simplhur}
We fix a PMQ $\Q$ throughout the section, and denote by $\hQ$ its completion. We use this to construct a CW complex $|\Arr(\Q)|$,
obtained as geometric realisation of a
bisimplicial set $\Arr(\Q)$. The definition of $\Arr(\Q)$
is based on \emph{arrays} of elements of $\hQ$ and is an instance
of a double bar construction.

We then assume that $\Q$ is augmented, and define a sub-bisimplicial set
$\NAdm(\Q)$ of $\Arr(\Q)$, containing
the \emph{non-admissible} part of $\Arr(\Q)$.
The \emph{simplicial Hurwitz space} $\Hur^{\Delta}(\Q)$
is then defined as the difference of spaces
$|\Arr(\Q)|\setminus|\NAdm(\Q)|$.

The relative cellular chain complex $\CH_*(|\Arr(\Q)|,|\NAdm(\Q)|)$
has a rather
simple, combinatorial description, and can also be identified with the reduced total chain complex associated with a certain bisimplicial abelian group, arising through a double bar construction.

If $\Q$ is locally finite, then $\Hur^{\Delta}(\Q)$ turns out to be locally compact, and the compactly supported dual cochain complex $\CH^*_{\mathrm{cpt}}(|\Arr(\Q)|,|\NAdm(\Q)|)$ computes the compactly supported cohomology of $\Hur^{\Delta}(\Q)$.

\subsection{\texorpdfstring{$\hQ$}{hQ}-crossed objects}
\label{subsec:hQcrossed}
The following definition extends the classical notion of
$G$-crossed objects in a category (see for instance \cite[Section 4.2]{FreydYetter})
from the specific case of a group $G$ to the more general case of a complete PMQ $\hQ$.
\begin{defn}
\label{defn:hQborel}
We define a (small) category $\hQ\borel\hQ$. Its set of objects is
$\hQ$; for $a,b\in\hQ$, a morphism $a\to b$ is given by an element $c\in\hQ$ such that
$b=a^c$. Composition of morphisms is given by multiplication in $\hQ$.
For a category $\bfA$ we let $\XA(\hQ)$ be the category of functors
$\hQ\borel\hQ\to\bfA$, with natural transformations as morphisms. An object in $\XA(\hQ)$ is called a \emph{$\hQ$-crossed object in $\bfA$}.
\end{defn}
Concretely, an object $X\in\XA(\hQ)$ consists of objects $X(a)$ for all $a\in\hQ$,
together with maps $(-)^b\colon X(a)\to X(a^b)$ for all $a,b\in\hQ$.
In all examples we will consider, $\bfA$ will be a category
with coproducts; in this case we have a forgetful functor
\[
 U\colon \XA(\hQ)\to \bfA,\quad X\mapsto \coprod_{a\in\hQ}X(a);
\]
we will sometimes regard $X\in\XA(\hQ)$ as
the object $\coprod_{a\in\hQ}X(a)\in\bfA$,
which is endowed with a decomposition into subobjects $X(a)$ as well as with a right action
of $\hQ$, i.e. a map of PMQs $\rho\colon\hQ\to \Aut_{\bfA}(X)^{op}$ with the property that $\rho_b$ restricts to a map $X(a)\to X(a^b)$ for all $a,b\in\hQ$.
The object $X(a)$ will be called the part of $X$
of \emph{$\hQ$-grading} equal to $a$. The map $\rho_b$ is also denoted $(-)^b$.

Since $\Aut_{\bfA}(X)^{op}$ is a group, the map $\rho\colon\hQ\to \Aut_{\bfA}(X)^{op}$
extends to a map of groups $\cG(\rho)\colon\cG(\hQ)\to \Aut_{\bfA}(X)^{op}$, so a $\hQ$-crossed object in $\bfA$ is naturally endowed with an action of the group
$\cG(\hQ)$.

The examples of $\hQ$-crossed categories that we will consider are the following:
\begin{itemize}
 \item the category $\XSet(\hQ)$ of $\hQ$-crossed sets;
 \item the category $\XTop(\hQ)$ of $\hQ$-crossed topological spaces;
 \item for a commutative ring $R$, the category $\XMod_R(\hQ)$ of $\hQ$-crossed $R$-modules.
\end{itemize}
In the special case $\hQ=G$, for a group $G$, the category $\XMod_R(\hQ)$ agrees with the category of \emph{Yetter-Drinfeld modules} over the Hopf algebra $R[G]$, considered recently in \cite{ETW:shufflealgebras}. The name ``Yetter-Drinfeld'' is explained as follows: Yetter \cite{Yetter90} associates with any Hopf algebra $H$ a category of ``crossed bimodules'', and Yetter's category agrees with the category of (left) modules over the Drinfeld double of $H$ \cite{Drinfeld86} when $H$ is a finite dimensional Hopf algebra over a field, as proved in \cite{Majid91}.

\begin{ex}
 \label{ex:hQascrossedset}
  The set $\hQ$ has a $\hQ$-crossed set structure, with $a\in\hQ$ in $\hQ$-grading $a$, and where $\hQ$ acts on itself by conjugation. In fact $\hQ$ is the terminal object in $\XSet(\hQ)$: every $\hQ$-crossed set admits a unique map of $\hQ$-crossed sets to $\hQ$.
  
  If $\Q$ is an augmented PMQ with completion $\hQ$, we can regard $\hQ$ as the disjoint union of three $\hQ$-crossed sets, namely
  $\cJ(\Q)=\hQ\setminus\Q$, $\Q_+=\Q\setminus\set{\one}$
   and $\set{\one}$.
\end{ex}

\begin{ex}
\label{ex:RQ}
  The PMQ-rings $R[\Q]$ and $R[\hQ]$
  (see Definition \ref{defn:PMQring})
  are $\hQ$-crossed $R$-modules, by putting $\sca{a}$ in $\hQ$-grading $a\in\hQ$, and by letting $\sca{a}^b=\sca{a^b}$ for $b\in\hQ$.
\end{ex}

 \begin{defn}
 \label{defn:Xproduct}
 Let $\bfA$ be a monoidal category admitting coproducts and for which the monoidal product
 $-\otimes -$ distributes with respect to coproducts (e.g., $\bfA$ is closed monoidal). Let $\bfone_{\bfA}$ be the monoidal unit of $\bfA$.
  We define a monoidal product on $\XA(\hQ)$, called \emph{Day convolution} and denoted also $-\otimes-$.
 For $X,Y\in\XA$, we set
 \[
  \pa{X\otimes Y}(a)=\coprod_{b,c\in\hQ\colon bc=a}X(b)\otimes Y(c).
 \]
 The action of $\hQ$ is diagonal: for $d\in\hQ$ the map $(-)^d\colon(X\otimes Y)(a)\to(X\otimes Y)(a^d)$
 restricts to the map $(-)^d\otimes (-)^d\colon X(b)\otimes Y(c)\to X(b^d)\otimes Y(c^d)$ whenever
 $bc=a$. Note that $X(b^d)\otimes Y(c^d)$ is indeed one of the objects occurring in the coproduct defining $(X\otimes Y)((bc)^d)$, because of the equality $b^dc^d=(bc)^d$.

 We obtain a monoidal structure on $\XA(\hQ)$: associativity of the tensor product follows from the associativity
 of the product of $\hQ$.
 The unit object of the monoidal category $\XA(\hQ)$ is
 $\bfone_{\XA(\hQ)}$, defined by setting:
 \begin{itemize}
  \item $\bfone_{\XA(\hQ)}(\one)=\bfone_{\bfA}$, which is endowed with the trivial $\hQ$-action;
  \item $\bfone_{\bfA}(a)=\emptyset_{\bfA}$, for all $a\in\hQ\setminus\set{\one}$, where
  $\emptyset_{\bfA}$ is the initial object (or empty coproduct) in $\bfA$.
 \end{itemize}
\end{defn}
 Note that by definition we have a chain of natural isomorphisms of objects in $\bfA$
 \[
 \begin{split}
  U(X)\otimes U(Y)& = \pa{\coprod_{b\in\hQ}X(b)}\otimes\pa{\coprod_{c\in\hQ}Y(c)}\cong
  \coprod_{b,c\in\hQ} X(b)\otimes Y(c) \cong\\
  &\cong \coprod_{a\in\hQ}(X\otimes Y)(a)=U(X\otimes Y),
 \end{split}
 \]
 which is natural in $X$ and $Y$, so that $U\colon\XA(\hQ)\to \bfA$ is a strong monoidal functor.

\begin{defn}
 \label{defn:Xbraiding}
Let $\bfA$ be a category as in Definition \ref{defn:Xproduct}, and assume further that $\bfA$
is braided monoidal, with braiding denoted $\braiding_{-,-}$.
We enhance the monoidal structure on $\XA(\hQ)$ to a braided monoidal structure, by
defining a braiding on $\XA(\hQ)$, also denoted $\braiding_{-,-}$.
For $X,Y\in\XA(\hQ)$ and for $a\in\hQ$, the braiding $\braiding_{X,Y}(a)\colon (X\otimes Y)(a)\to (Y\otimes X)(a)$ restricts, for all decompositions $a=bc$ in $\hQ$, to the isomorphism
$X(b)\otimes Y(c)\to Y(c)\otimes X(b^c)$ given by the composition
\[
\begin{tikzcd}[column sep=11ex]
 X(b)\otimes Y(c)\ar[r,"\braiding_{X(b),Y(c)}"]& Y(c)\otimes X(b)\ar[r,"\Id_{Y(c)}\otimes (-)^c"] &
 Y(c)\otimes X(b^c).
\end{tikzcd}
\]
\end{defn}

We check explicitly that the braiding $\braiding_{-,-}$ on $\XA(\hQ)$ satisfies
the braid relation. Let $X,Y,Z\in\XA(\hQ)$; we have to check for all $a\in\hQ$ the equality of the
two following compositions of morphisms $(X\otimes Y\otimes Z)(a)\to (Z\otimes Y\otimes X)(a)$:
\[
\begin{split}
(\braiding_{Y,Z}\otimes\Id_{X})(a)\circ(\Id_{Y}\otimes\braiding_{X,Z})(a)\circ(\braiding_{X,Y}\otimes\Id_{Z})(a);\\
(\Id_{Z}\otimes\braiding_{X,Y})(a)\circ(\braiding_{X,Z}\otimes\Id_{Y})(a)\circ(\Id_X\otimes\braiding_{Y,Z})(a).
\end{split}
\]
For all $b,c,d\in\hQ$ satisfying $bcd=a$,
both compositions
restrict to the same morphism $X(b)\otimes Y(c)\otimes Z(d)\to Z(d)\otimes Y(c^d)\otimes X(b^{cd})$,
namely
\[
 (\Id_{Z(d)}\otimes(-)^d\otimes (-)^{cd}) \circ\braiding_{X(b),Y(c),Z(d)}\colon X(b)\otimes Y(c)\otimes Z(d)\to Z(d)\otimes Y(c^d)\otimes X(b^{cd}),
\]
where $\braiding_{X(b),Y(c),Z(d)}$ denotes either of the following compositions in $\bfA$:
\[
\begin{split}
(\braiding_{Y(c),Z(d)}\otimes\Id_{X(b)})\circ(\Id_{Y(c)}\otimes\braiding_{X(b),Z(d)})\circ(\braiding_{X(b),Y(c)}\otimes\Id_{Z(d)})=\\
(\Id_{Z(d)}\otimes\braiding_{X(b),Y(c)})\circ(\braiding_{X(b),Z(d)}\otimes\Id_{Y(c)})\circ(\Id_{X(b)}\otimes\braiding_{Y(c),Z(d)}).
\end{split}
\]
This uses that the braiding of $\bfA$ satisfies the braid relation, together with the naturality of the braiding
of $\bfA$ and the equality $cd=dc^d$.
We leave to the reader to check that the other axioms of braided monoidal category are satisfied.

\subsection{The bisimplicial set of arrays}
 Recall Example \ref{ex:hQascrossedset} and Definition \ref{defn:commutative}. The monoid structure of $\hQ$ makes $\hQ$ into a commutative algebra in $\XSet(\hQ)$.
\begin{defn}
\label{defn:ArrQ}
 Let $\Q$ be a PMQ and let $\hQ$ denote its completion. We denote by $\Arr(\Q)=B_{\bullet,\bullet}(\hQ,\hQ,\Id_{\hQ})$. It is a bisimplicial $\hQ$-crossed set, and by the levelwise
 forgetful functor $ssU\colon ss\XSet(\hQ)\to ss\Set$ it can be regarded as a bisimplicial set.
\end{defn}

Note that $\Arr(\Q)$ only depends on the completion $\hQ$ of $\Q$. In Subsection \ref{subsec:NAdm}, under the hypothesis that $\Q$ is augmented, we will define a bisimplicial sub-$\hQ$-crossed set $\NAdm(\Q)\subset\Arr(\Q)$: the latter will depend on $\Q$ and not only on $\hQ$, and we will be mainly interested in the couple of bisimplicial $\hQ$-crossed sets $(\Arr(\Q),\NAdm(\Q))$.
Note also that the $\hQ$-crossed set $\Arr_{p,q}(\Q)=B_{p,q}(\hQ,\hQ,\Id_{\hQ})$ is isomorphic to the $\hQ$-crossed set $\hQ^{\otimes(p+2)(q+2)}$, whose underlying set is the cartesian power $\hQ^{(p+2)(q+2)}$:
we thus regard an element of $\Arr_{p,q}(\Q)$ as an \emph{array} of size $(p+2)\times(q+2)$ with entries in $\hQ$. More precisely, a generic array $\ua=(a_{i,j})_{0\leq i\leq p+1,0\leq i\leq q+1}$
of size $(p+2)\times(q+2)$ with entries in $\hQ$ consists of $p+2$ \emph{columns}
$\ua_0,\dots,\ua_{p+1}$, containing each the entries $a_{i,j}$ for a fixed value of $i$;
similarly, the $\hQ$-crossed set $B_{p,q}(\hQ,\hQ,\Id_{\hQ})=(\hQ^{\otimes q+2})^{\otimes p+2}$ can be regarded as a set under the forgetful functor $U$;
the set $U(B_{p,q}(\hQ,\hQ,\Id_{\hQ}))$ is in canonical bijection with the set
$(\hQ^{q+2})^{p+2}$, containing $(p+2)$-tuples of $(q+2)$-tuples of elements of $\hQ$.

We describe now the horizontal and vertical face and degeneracy maps of $\Arr(\Q)$.
\begin{nota}
 \label{nota:column}
 For an array $\ua\in\Arr_{p,q}(\Q)$ and for $0\le i\le p+1$
 we denote by $\ua_i=(a_{i,0},\dots,a_{i,q+1})$ the $i$\textsuperscript{th} column, which is a sequence of $q+2$
 elements in $\hQ$.
 
 For $0\leq j\leq q+2$ and for any sequence $\ub=(b_0,\dots,b_{q+1})$ of $q+2$ elements in $\hQ$
 we denote by $c(\ub)_j$ the product $b_0\dots b_{j-1}\in\hQ$; by convention we set $c(\ub)_0=\one$.
\end{nota}
The following lemma is a direct consequence of the definitions.
\begin{lem}
 \label{lem:facemaps}
Let $\ua\in\Arr_{p,q}(\Q)$ for some $p,q\ge0$:
\begin{itemize}
 \item for $0\le i\le p$, the degeneracy map $s_i^{\hor}\colon\Arr_{p,q}(\Q)\to\Arr_{p+1,q}(\Q)$
 acts on $\ua$ by adjoining a column made of $\one$'s, between the $i$\textsuperscript{th} and $(i+1)$\textsuperscript{st} columns of $\ua$;
 \item similarly, for $0\le j\le q$, the degeneracy map
 $s_j^{\ver}\colon\Arr_{p,q}(\Q)\to\Arr_{p,q+1}(\Q)$ acts on $\ua$ by adjoining a row made of $\one$'s, between the $j$\textsuperscript{th} and
 $(j+1)$\textsuperscript{st} rows of $\ua$;
 \item for $p\ge1$ and $0\le i\le p$, the face map $d_i^{\hor}\colon\Arr_{p,q}(\Q)\to\Arr_{p-1,q}(\Q)$
 acts on $\ua$ as follows: the columns of $d_i^{\hor}(\ua)$ are obtained from those of $\ua$ by
 replacing the $i$\textsuperscript{th} and $(i+1)$\textsuperscript{st} columns of $\ua$ by the following sequence of $q+2$ elements in $\hQ$:
\[
 \pa{a_{i,0}a_{i+1,0}\, ,\, a_{i,1}^{a_{i+1,0}}a_{i+1,1}\,,\,\dots\, ,\, a_{i,j}^{c(\ua_{i+1})_j}a_{i+1,j}\,,\,\dots\,,\, a_{i,q+1}^{c(\ua_{i+1})_{q+1}}a_{i+1,q+1}}. 
\]
\item for $q\ge1$ and $0\le j\le q$, the face map $d_j^{\ver}\colon\Arr_{p,q}(\Q)\to\Arr_{p,q-1}(\Q)$
 acts on $\ua$ as follows: the rows of $d_j^{\ver}(\ua)$ are obtained from those of $\ua$ by replacing the
 $j$\textsuperscript{th} and $(j+1)$\textsuperscript{st} rows by the following sequence of $p+2$ elements in $\hQ$:
\[
 \pa{a_{0,j}a_{0,j+1}\,,\,a_{1,j}a_{1,j+1}\,,\,\dots\,,\,a_{i,j}a_{i,j+1}\,,\,\dots\,,\,a_{p+1,j}a_{p+1,j+1}}.
\]
\end{itemize}
\end{lem}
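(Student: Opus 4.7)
The plan is to directly unpack Definition \ref{defn:ArrQ}, combined with Definition \ref{defn:doublebarconstruction} applied to the commutative algebra pair $(\hQ, \hQ, \Id_{\hQ})$ in the braided monoidal category $\XSet(\hQ)$. The computation splits into three pieces I would treat separately: the degeneracies, the vertical face maps, and the horizontal face maps.

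First the easy cases. The inner degeneracies insert the unit $\one$ of $\hQ$ at a prescribed position within each column; since vertical degeneracies act levelwise in the outer index $p$, this amounts to inserting a row of $\one$'s, giving the formula for $s_j^{\ver}$. The outer degeneracies insert the algebra unit of $\hQ^{\otimes(q+2)}$, which by Definition \ref{defn:tensoralgebra} is the constant tuple $(\one, \dots, \one)$; hence $s_i^{\hor}$ inserts a column of $\one$'s. The vertical face maps are equally direct: within each column, $d_j^{\ver}$ is induced by the multiplication $\mu_{\hQ}$ applied to the entries in positions $j$ and $j+1$, yielding precisely the stated formula.

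The essential calculation is for the horizontal face maps $d_i^{\hor}$, which use the algebra multiplication $\mu\colon \hQ^{\otimes(q+2)} \otimes \hQ^{\otimes(q+2)} \to \hQ^{\otimes(q+2)}$ in $\XSet(\hQ)$. I would establish by induction on $q$ the formula
\[
(a_0, \dots, a_{q+1}) \cdot (b_0, \dots, b_{q+1}) = \pa{a_0 b_0,\, a_1^{b_0} b_1,\, \dots,\, a_{q+1}^{b_0 b_1 \cdots b_q} b_{q+1}}.
\]
The base case $q = 0$ follows from the explicit description of $\mu_{A \otimes B}$ in Definition \ref{defn:tensoralgebra} combined with the braiding of $\XSet(\hQ)$ from Definition \ref{defn:Xbraiding}, which in this situation restricts to $\braiding_{\hQ, \hQ}\colon (a, b) \mapsto (b, a^b)$. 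For the inductive step, I would view $\hQ^{\otimes(q+2)}$ as $\hQ^{\otimes(q+1)} \otimes \hQ$ and compute: the braiding $\braiding_{\hQ, \hQ^{\otimes(q+1)}}$ sends $a_{q+1} \otimes (b_0, \dots, b_q)$ to $(b_0, \dots, b_q) \otimes a_{q+1}^{b_0 \cdots b_q}$, because the $\hQ$-grading of $(b_0, \dots, b_q) \in \hQ^{\otimes(q+1)}$ is the ordered product $b_0 b_1 \cdots b_q$ (by iterated application of Definition \ref{defn:Xproduct}). Combining this with the inductive hypothesis on the first $q+1$ entries and the ordinary product on the last entry gives the claimed formula.

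The formula for $d_i^{\hor}$ with $1 \leq i \leq p-1$ is then immediate by substituting $a_k = a_{i,k}$ and $b_k = a_{i+1,k}$ and introducing the abbreviation $c(\ua_{i+1})_j = a_{i+1,0} \cdots a_{i+1, j-1}$. The extremal cases $i = 0$ and $i = p$ come from the bimodule actions $\mu_{B,A}$ and $\mu_{A,B}$ of Notation \ref{nota:algebrapairbimodule}; since $f = \Id_{\hQ}$, both coincide with the algebra multiplication of $\hQ^{\otimes(q+2)}$, so the same formula applies. The main obstacle is keeping track of braiding conventions and correctly identifying the $\hQ$-grading of an iterated tensor power of $\hQ$ as the ordered product of the entries; once this bookkeeping is in place, the result is a direct unpacking of the definitions.
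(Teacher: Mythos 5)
Your proposal is correct and is exactly the unpacking the paper has in mind: the paper offers no written proof, asserting the lemma is ``a direct consequence of the definitions,'' and your computation — units for the degeneracies, entrywise multiplication for the vertical faces, and the braided algebra structure on $\hQ^{\otimes(q+2)}$ (with the grading of a tuple being the ordered product of its entries) for the horizontal faces — is precisely that consequence spelled out. The inductive formula for the product in $\hQ^{\otimes(q+2)}$ and the observation that the extremal faces reduce to the same multiplication since $f=\Id_{\hQ}$ are both right.
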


\subsection{Non-degenerate arrays}
For the rest of the section we assume that $\Q$, and hence also $\hQ$, are augmented. In this subsection we
show that the non-degenerate arrays in $\Arr(\Q)$ form a semi-bisimplicial $\hQ$-crossed set, i.e. they are closed under face maps.
\begin{nota}
An array $\ua\in\Arr_{p,q}(\Q)$ is degenerate if it lies in the image of a degeneracy
map $s_i^{\hor}$ or $s_j^{\ver}$ of the bisimplicial $\hQ$-crossed set $\Arr(\Q)$.
For all $p,q\ge0$ we denote by $\Arr^{\ndeg}_{p,q}(\Q)\subset\Arr_{p,q}(\Q)$
the subset of non-degenerate arrays.
\end{nota}
Visualising elements of $\Arr(\Q)$ as arrays of elements in $\hQ$ is very helpful: for instance, an array $\ua$ is degenerate if and only if it has an inner row or an inner column made of $\one$'s, i.e. if either of the following holds:
\begin{itemize}
 \item there is $1\le i\le p$ such that for all $0\le j\le q+1$ we have $a_{i,j}=\one$;
 \item there is $1\le j\le q$ such that for all $0\le i\le p+1$ we have $a_{i,j}=\one$.
\end{itemize}

\begin{lem}
\label{lem:ndegsemibisimplicial}
The $\hQ$-crossed sets $\Arr_{p,q}^{\ndeg}(\Q)$ assemble into a semi-bisimplicial $\hQ$-crossed set $\Arr^{\ndeg}(\Q)$, whose horizontal
and vertical face maps are the restrictions of those of the bisimplicial $\hQ$-crossed set $\Arr(\Q)$.
\end{lem}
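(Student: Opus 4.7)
The plan is to verify that the face maps of $\Arr(\Q)$ restrict to maps on non-degenerate arrays, and that the $\hQ$-action similarly preserves non-degeneracy; the semi-bisimplicial identities are then inherited automatically from the bisimplicial structure of $\Arr(\Q)$.

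The key preliminary observation is that the augmentation hypothesis on $\Q$ transfers to the completion $\hQ$. By Example \ref{ex:one0}, $\Q$ being augmented is equivalent to the existence of a map of PMQs $\epsilon\colon\Q\to\set{\one,0}$; since $\set{\one,0}$ is complete, the universal property of $\hQ$ (Definition \ref{defn:hQ}) extends $\epsilon$ uniquely to a map of PMQs $\hat\epsilon\colon\hQ\to\set{\one,0}$. For any $w\in\hQ\setminus\set{\one}$, writing $w=a_1\dots a_r$ with $a_i\in\Q$ forces some $a_i\neq\one$, so $\hat\epsilon(w)=0$. Consequently, the equation $ab=\one$ in $\hQ$ forces $a=b=\one$, and since $(-)^b$ is a bijection fixing $\one$, we also have $a^b=\one$ iff $a=\one$. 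As for the $\hQ$-action, it acts on an array $\ua=(a_{i,j})$ by conjugating each entry by a common element $b\in\hQ$; since this is a bijection fixing $\one$, it preserves the property of having an inner row or column of $\one$'s, and hence preserves non-degeneracy.

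The substantive step is to prove that face maps preserve non-degeneracy, which I would argue by contrapositive. Suppose $\ub:=d_i^{\hor}(\ua)$ is degenerate. If $\ub$ has an inner row at position $\ell$ (with $1\le\ell\le q$) consisting of $\one$'s, then the explicit formula in Lemma \ref{lem:facemaps} gives $a_{k,\ell}=\one$ for all $k\neq i,i+1$, together with the relation $a_{i,\ell}^{c(\ua_{i+1})_\ell}a_{i+1,\ell}=\one$; augmentation of $\hQ$ splits this as $a_{i+1,\ell}=\one$ and $a_{i,\ell}^{c(\ua_{i+1})_\ell}=\one$, the latter forcing $a_{i,\ell}=\one$. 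Thus the whole $\ell$-th row of $\ua$ is $\one$'s, so $\ua$ is degenerate. If instead $\ub$ has an inner column at position $k$ (with $1\le k\le p-1$), then either $k<i$ (column $k$ of $\ua$ equals column $k$ of $\ub$ and is inner in $\ua$), or $k>i$ (column $k+1$ of $\ua$ is inner and all $\one$'s), or $k=i$ (the merged column consisting of the entries $a_{i,j}^{c(\ua_{i+1})_j}a_{i+1,j}$ being all $\one$'s forces both columns $i$ and $i+1$ of $\ua$ to be $\one$'s by the same augmentation argument, and column $i$ is inner since $1\le i\le p-1$). The vertical case $\ub=d_j^{\ver}(\ua)$ is entirely analogous, and in fact slightly simpler, as the merged entries take the form $a_{k,j}a_{k,j+1}$ with no conjugation, so a single application of augmentation of $\hQ$ handles the corresponding case.

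The only genuine obstacle is the sub-case in which the newly produced row or column of $\ub$ is itself the one of $\one$'s: this is precisely the point at which the hypothesis that $\Q$ is augmented is invoked, via the extended augmentation $\hat\epsilon\colon\hQ\to\set{\one,0}$. All remaining cases are direct consequences of the explicit formulas from Lemma \ref{lem:facemaps}.
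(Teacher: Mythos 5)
Your proof is correct and follows essentially the same route as the paper: a contrapositive argument that traces an inner row or column of $\one$'s in a face $d_i^{\hor}(\ua)$ or $d_j^{\ver}(\ua)$ back through the explicit formulas of Lemma \ref{lem:facemaps}, invoking the augmentation of $\hQ$ exactly at the merged row/column. The only difference is that you explicitly justify that $\hQ$ inherits the augmentation from $\Q$ (via the universal property and the map to $\set{\one,0}$) and that the conjugation action preserves non-degeneracy, both of which the paper leaves implicit; these are welcome additions rather than deviations.
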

\begin{proof}
Let $\ua\in\Arr_{p,q}(\Q)$ be an array, and let $0\le i\le p+1$ and $0\le j\le q+1$.
We need to check that $d^{\hor}_i(\ua)$ and $d^{\ver}_j(\ua)$ are non-degenerate arrays,
as soon as $\ua$ is non-degenerate. 
Applying Lemma \ref{lem:facemaps} we obtain the following.
\begin{itemize}
 \item Suppose that $\ua':=d^{\hor}_i\ua\in\Arr_{p-1,q}(\Q)$ is degenerate; there are two possibilities.
 \begin{itemize}
  \item The $(i')$\textsuperscript{th} column of $\ua'$ is made of $\one$'s, for some $1\le i'\le p-1$: if $i'\neq i$,
  then the $(i')$\textsuperscript{th} column of $\ua'$ is also one of the inner columns of $\ua$, witnessing that $\ua$
  is degenerate; if $i'=i$, by Lemma \ref{lem:facemaps} the $i$\textsuperscript{th} column of $\ua'$
  contains the elements $a_{i,j'}^{c(\ua_{i+1})_{j'}} a_{i+1,j'}$, for $0\le j'\le q+1$, and if all these elements are equal to $\one$,
  using that $\hQ$ is augmented, both the $i$\textsuperscript{th} and
  the $(i+1)$\textsuperscript{st} columns of $\ua$ witness that $\ua$ is degenerate.
  \item The $(j')$\textsuperscript{th} row of $\ua'$ is made of $\one$'s, for some $1\le j'\le q$: by Lemma \ref{lem:facemaps}
  the $(j')$\textsuperscript{th} row of $\ua'$ contains the elements $a_{i',j'}$, for $0\le i'\le p+1$ with $i'\neq i,i+1$,
  together with the element $a_{i,j'}^{c(\ua_{i+1})_{j'}}a_{i+1,j'}$; again, if all elements in the $(j')$\textsuperscript{th}
  row of $\ua'$ are $\one$'s, then also all elements in the $(j')$\textsuperscript{th} row of $\ua$ are $\one$'s.
 \end{itemize}
 \item Suppose that $\ua':=d^{\ver}_j\ua\in\Arr_{p,q-1}(\Q)$ is degenerate; there are two possibilities.
 \begin{itemize}
  \item The $(j')$\textsuperscript{th} row of $\ua'$ is made of $\one$'s, for some $1\le j'\le q-1$: if $j'\neq j$,
  then the $(j')$\textsuperscript{th} row of $\ua'$ is also one of the inner rows of $\ua$; if $j'=j$, by Lemma \ref{lem:facemaps} the $j$\textsuperscript{th} row of $\ua'$
  contains the elements $a_{i',j}a_{i',j+1}$, for $0\le i'\le p+1$, and if all these elements are equal to $\one$, using that $\hQ$ is augmented, then both the $j$\textsuperscript{th} \emph{and} the $(j+1)$\textsuperscript{st} rows of $\ua$ witness that $\ua$ is degenerate.
  \item The $(i')$\textsuperscript{th} column of $\ua'$ is made of $\one$'s, for some $1\le i'\le q$: by Lemma \ref{lem:facemaps}
  the $(i')$\textsuperscript{th} column of $\ua'$ contains the elements $a_{i',j'}$, for $0\le j'\le q+1$ with $j'\neq j,j+1$,
  together with the element $a_{i',j}a_{i',j+1}$; again, if all elements
  in the $(i')$\textsuperscript{th} column of $\ua'$ are $\one$'s, then also the $(i')$\textsuperscript{th} column of $\ua$ is made of $\one$'s.
 \end{itemize}
\end{itemize}
Thus a horizontal or vertical face of a non-degenerate array is again non-degenerate.
\end{proof}
In particular, the geometric realisation $|\Arr(\Q)|$ of $\Arr(\Q)$ as a bisimplicial set
is \emph{homeomorphic} to the thick geometric realisation $|\!|\Arr^{\ndeg}(\Q)|\!|$ of $\Arr^{\ndeg}(\Q)$ as a semi-bisimplicial set.
This has an advantage when constructing $\CH_*(|\Arr(\Q)|)$, the cellular chain complex of
$|\Arr(\Q)|$: its generators are in bijection with cells of $|\Arr(\Q)|$, i.e. with non-degenerate arrays;
moreover the differential in $\CH_*(|\Arr(\Q)|)$ is given by the usual alternating sum of
vertical and horizontal
face maps, where no term has to be skipped because it corresponds to a degenerate face of a bisimplex.

\subsection{Non-admissible arrays}
\label{subsec:NAdm}
We keep assuming that $\Q$ and $\hQ$ are augmented.
\begin{defn}
 \label{defn:NAdm}
Let $p,q\ge0$; an array $\ua\in\Arr_{p,q}(\Q)$ is \emph{admissible} if both the following conditions hold:
\begin{enumerate}
 \item the first and last rows, as well as the first and last columns, are made of $\one$'s;
 i.e., whenever $i\in\set{0,p+1}$ or $j\in\set{0,q+1}$ we have $a_{i,j}=\one\in\hQ$.
 \item all entries $a_{i,j}$ lie in $\Q\subset\hQ$.
\end{enumerate}
An array is \emph{non-admissible} if it is not admissible.
We denote by $\NAdm_{p,q}(\Q)\subset\Arr_{p,q}(\Q)$
the subset of non-admissible arrays. By definition $\NAdm_{p,q}(\Q)$ arises as the (non-disjoint)
union of two sets $\NAdm^{(1)}_{p,q}(\Q)$ and $\NAdm^{(2)}_{p,q}(\Q)$, containing arrays
for which condition (1), respectively condition (2), fails.
\end{defn}
We observe that $\NAdm_{p,q}(\Q)$ is closed under the action of $\hQ$ by conjugation, hence $\NAdm_{p,q}(\Q)$ can be regarded as a $\hQ$-crossed set. The same remark holds, for $\star=1,2$, for the subset
$\NAdm_{p,q}^{(\star)}(\Q)$.
\begin{lem}
 \label{lem:NAdminArr}
 Let $\star=1,2$; then the $\hQ$-crossed sets $\NAdm_{p,q}^{(\star)}(\Q)$, for varying $p,q\ge0$, assemble into a sub-$\hQ$-crossed bisimplicial set $\NAdm(\Q)^{(\star)}\subset \Arr(\Q)$.
 As a consequence also the sets $\NAdm_{p,q}(\Q)$ assemble into a sub-$\hQ$-crossed
 bisimplicial set $\NAdm(\Q)\subset \Arr(\Q)$.
\end{lem}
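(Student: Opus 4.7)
The plan is to verify separately, for $\star=1$ and $\star=2$, that each subset $\NAdm^{(\star)}_{p,q}(\Q)\subseteq\Arr_{p,q}(\Q)$ is closed under all horizontal and vertical face and degeneracy maps of $\Arr(\Q)$, as well as under the pointwise conjugation $\hQ$-action. Once this is done, the final clause follows automatically since $\NAdm(\Q)=\NAdm^{(1)}(\Q)\cup\NAdm^{(2)}(\Q)$, and a set-theoretic union of sub-bisimplicial $\hQ$-crossed sets is again a sub-bisimplicial $\hQ$-crossed set. Closure under degeneracies is immediate from Lemma \ref{lem:facemaps}: the maps $s^{\hor}_i$ and $s^{\ver}_j$ just insert an inner row or column of $\one$'s, which can neither alter the first/last rows or columns of $\ua$ nor promote any entry from $\hQ\setminus\Q$ into $\Q$. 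Closure under the $\hQ$-action is likewise immediate: conjugation fixes $\one$, hence preserves condition (1) componentwise, and preserves the ideal $\cJ(\Q)=\hQ\setminus\Q$, hence preserves condition (2) componentwise.

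The substantive work is closure under the face maps $d^{\hor}_i$ and $d^{\ver}_j$. A key algebraic preliminary is that $\hQ$ is itself augmented: the augmentation $\Q\to\set{\one,0}$ from Example \ref{ex:one0} extends by the universal property of the completion to a map of PMQs $\hQ\to\set{\one,0}$ whose only preimage of $\one$ is $\one$. Concretely, if $a\neq\one$ in $\hQ$, then $ab$ and $ba$ are also $\neq\one$ for every $b\in\hQ$, and $a^b\neq\one$ as well (since $(-)^b$ is a bijection of $\hQ$ fixing $\one$).

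For $\NAdm^{(1)}$, suppose $\ua$ has a boundary entry $a_{i_0,j_0}\neq\one$; by symmetry assume $i_0=0$, the other three sides being analogous. I split on the face map applied. If $d^{\hor}_i$ is applied with $i\ge1$, the first column of the image equals $\ua_0$, so $a_{0,j_0}$ persists as a non-$\one$ entry in the first column; if $i=0$, then by Lemma \ref{lem:facemaps} the new $(0,j_0)$-entry is $a_{0,j_0}^{c(\ua_1)_{j_0}}a_{1,j_0}$, which is $\neq\one$ by the augmentation argument above. If $d^{\ver}_j$ is applied and $j\notin\set{j_0-1,j_0}$, the entry $a_{0,j_0}$ simply migrates to a boundary position in the image; if $j\in\set{j_0-1,j_0}$, the relevant image entry is $a_{0,j}a_{0,j+1}$, again $\neq\one$ by augmentation.

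For $\NAdm^{(2)}$, suppose some entry $a_{i_0,j_0}$ lies in $\cJ(\Q)$. Since $\cJ(\Q)$ is an ideal of $\hQ$ by Proposition \ref{prop:hQ}, it is closed under conjugation by arbitrary elements of $\hQ$ and absorbs products on either side. Inspection of the formulas in Lemma \ref{lem:facemaps} shows that each entry of a face image is either a preserved original entry of $\ua$, or a product of the form $a_{i,j'}^{c(\ua_{i+1})_{j'}}a_{i+1,j'}$ for some indices (for $d^{\hor}_i$), or a product $a_{i',j}a_{i',j+1}$ (for $d^{\ver}_j$); in either kind of product, if any factor lies in $\cJ(\Q)$ then the outcome does too. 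In particular the image contains some entry in $\cJ(\Q)$ and hence belongs to $\NAdm^{(2)}$. The only step requiring real attention is the bookkeeping for $\NAdm^{(1)}$ when the boundary row or column witnessing non-admissibility is itself the one being combined by the face map, but this is precisely where the augmentation of $\hQ$ intervenes and disposes of the problem.
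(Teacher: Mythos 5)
Your proof is correct and follows essentially the same route as the paper: the paper shows that face and degeneracy maps \emph{reflect} each admissibility condition, which is exactly the contrapositive of your statement that they \emph{preserve} each type of non-admissibility, and the substantive inputs (augmentedness of $\hQ$ for condition (1), the ideal property of $\cJ(\Q)$ from Proposition \ref{prop:hQ} for condition (2), and the explicit formulas of Lemma \ref{lem:facemaps}) are the same. The only difference is cosmetic: you write out the condition-(1) bookkeeping that the paper omits as being analogous to the proof of Lemma \ref{lem:ndegsemibisimplicial}, and you justify explicitly that $\Q$ augmented implies $\hQ$ augmented, which the paper asserts without proof.
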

\begin{proof}
We prove that all face and degeneracy maps of $\Arr(\Q)$ reflect each of condition (1) and (2)
in Definition \ref{defn:NAdm}: this means, for example, that if
$\ua\in\Arr_{p,q}(\Q)$ is such that $d_j^{\ver}(\ua)$ satisfies condition (2), then $\ua$ satisfies condition (2) as well.

For condition (1) the argument is analogous as the one in the proof of Lemma \ref{lem:ndegsemibisimplicial}, so we omit it.
We consider now condition (2). Let $\ua\in\Arr_{p,q}(\Q)$ for some $p,q\ge0$ and let
$0\le i\le p$. The degeneracy map $s_i^{\hor}$ acts on $\ua$ by adding an additional
inner column made of $\one$'s; in particular the entries of $\ua$ are all contained
in the set of entries of $s_i^{\hor}(\ua)$, and thus if $s_i^{\hor}(\ua)$ satisfies condition (2),
then so does also $\ua$. By the same argument, the degeneracy maps $s_j^{\ver}$ reflect condition (2).

Let now $0\le i\le p$ and suppose that $d_i^{\hor}(\ua)$ satisfies condition (2).
The columns of $d_i^{\hor}(\ua)$ are obtained from those of $\ua$ by replacing the $i$\textsuperscript{th}
and $(i+1)$\textsuperscript{st} column
by the sequence of elements $a_{i,j'}^{c(\ua_{i+1})_{j'}}a_{i+1,j'}$, for $0\le j'\le q+1$; in particular
all entries of $\ua$ occur (up to conjugation) as factors of entries of $d_i^{\hor}(\ua)$.
Since $\cJ(\Q)=\hQ\setminus\Q$ is an ideal, if all entries of $d_i^{\hor}(\ua)$ lie in $\Q$, then
also all entries of $\ua$ must lie in $\Q$. Hence condition (2) is reflected by horizontal face maps.

Let finally $0\le j\le p$ and suppose that $d_j^{\ver}(\ua)$ satisfies condition (2).
The rows of $d_j^{\ver}(\ua)$ are obtained from those of $\ua$ by replacing the $j$\textsuperscript{th}
and $(j+1)$\textsuperscript{st} row
by the sequence of elements $a_{i',j}a_{i',j+1}$, for $0\le i'\le p+1$; again we notice that all entries
of $\ua'$ occur as factors of entries of $d_j^{\ver}(\ua)$, and the same argument used before shows
that condition (2) is reflected by vertical face maps.
\end{proof}

\subsection{Configurations with monodromy}
\label{subsec:simplhur}
Since $\Arr(\Q)$ is a bisimplicial $\hQ$-crossed set,
the topological space $|\Arr(\Q)|$ is a $\hQ$-crossed space, i.e. an object in $\XTop(\hQ)$.
By Lemma \ref{lem:NAdminArr}, $\NAdm(\Q)$ is a bisimplicial $\hQ$-crossed set, hence $|\NAdm(\Q)|$ is a $\hQ$-crossed space, and $|\NAdm(\Q)|\subset|\Arr(\Q)|$ is an inclusion of $\hQ$-crossed spaces.

\begin{defn}
The \emph{simplicial Hurwitz space} with coefficients in $\Q$, denoted $\Hur^{\Delta}(\Q)$,
is the $\hQ$-crossed space $|\Arr(\Q)|\setminus|\NAdm(\Q)|$.

Recall Example \ref{ex:hQascrossedset}, and note that $\hQ$, considered as a discrete space, is also the terminal object in $\XTop(\hQ)$.
We denote by $\hat\totmon\colon \Hur^{\Delta}(\Q)\to\hQ$ the unique map of $\hQ$-crossed
spaces, and call it the \emph{total monodromy}.
The right action of $\hQ$ (and hence of $\cG(\hQ)=\cG(\Q)$) on $\Hur^{\Delta}(\Q)$ is called the \emph{action by global conjugation}.
\end{defn}

A point in $\Hur^{\Delta}(\Q)$ can be interpreted as a configuration of points in the unit square
$(0,1)^2$ with the additional information of a monodromy with values in $\Q$. We briefly describe this idea
in the following, and refer to \cite{Bianchi:Hur2} for the precise construction; see also Figure \ref{fig:Ppsi}.

\begin{figure}[ht]
 \begin{tikzpicture}[scale=4,decoration={markings,mark=at position 0.38 with {\arrow{>}}}]
  \draw[dashed] (0,1) to (1,1) to (1,0);
  \draw[dashed,->] (-.3,0) to (1.3,0);
  \draw[dashed,->] (0,-1.1) to (0,1.1);
  \node at (0,-1) {$*$};
  \node at (-.05,-.05){\tiny $0$};
  \node at (1,-.05){\tiny $1$};
  \node[anchor=east] at (-.01,-1){\tiny $-\sqrt{-1}$};
  \node[anchor=east] at (-.01,1){\tiny $\sqrt{-1}$};
  \node at (.2,-.05){\tiny $s_1$};
  \node at (.6,-.05){\tiny $s_2$};
  \node[anchor=east] at (-.01,.2){\tiny $t_1\sqrt{-1}$};
  \node[anchor=east] at (-.01,.5){\tiny $t_2\sqrt{-1}$};
  \node[anchor=east] at (-.01,.9){\tiny $t_3\sqrt{-1}$};
  \draw[dotted] (0,.2) to (1,.2);
  \draw[dotted] (0,.5) to (1,.5);
  \draw[dotted] (0,.9) to (1,.9);
  \draw[dotted] (.2,0) to (.2,1);
  \draw[dotted] (.6,0) to (.6,1);
  \node at (.2,.2){$\bullet$};
  \node at (.2,.5){$\bullet$};
  \node at (.6,.5){$\bullet$};
  \node at (.6,.9){$\bullet$};
  \draw[thin, looseness=.6, postaction={decorate}] (0,-1) to[out=70,in=-90] (.3,.1) to[out=90,in=-90] (.1,.2) to[out=90,in=90] (.34,.2)
  to[out=-90,in=60] (0,-1);
  \draw[thin, looseness=.6, postaction={decorate}] (0,-1) to[out=50,in=-90] (.42,.4) to[out=90,in=-90] (.1,.5) to[out=90,in=90] (.45,.5)
  to[out=-90,in=40] (0,-1);
  \draw[thin, looseness=.9, postaction={decorate}] (0,-1) to[out=30,in=-90] (.7,.4) to[out=90,in=-90] (.55,.5) to[out=90,in=90] (.74,.5)
  to[out=-90,in=25] (0,-1);
  \draw[thin, looseness=.9, postaction={decorate}] (0,-1) to[out=20,in=-90] (.85,.8) to[out=90,in=-90] (.55,.9) to[out=90,in=90] (.9,.9)
  to[out=-90,in=15] (0,-1);  
  \draw[thin, looseness=.9,postaction={decorate}] (0,-1) to[out=83,in=-90] (.05,.55) to[out=90,in=90] (.64,.5) to[out=-90,in=-90] (.5,.5)
  to[out=90,in=90] (.07,.55) to[out=-90,in=80] (0,-1);  
  \node at (.3,.27){\tiny $\gamma_{1,1}$};
  \node at (.3,.4){\tiny $\gamma_{1,2}$};
  \node at (.73,.57){\tiny $\gamma_{2,2}$};
  \node at (.73,.8){\tiny $\gamma_{2,3}$};
  \node at (.3,.7){\tiny $\gamma$};
 \end{tikzpicture}
 \caption{A configuration in $\Hur^{\Delta}(\Q)$ of the form $(\ua;\us,\ut)$, where $\ua\in\Arr_{2,3}(\Q)$ is an
 admissible array of size $4\times 5$ whose only entries in $\Q_+$ are $a_{1,1}$, $a_{1,2}$, $a_{2,2}$ and $a_{2,3}$.
 We have $I(\ua)=\set{(1,1),(1,2),(2,2),(2,3)}$, and $P=\set{z_{1,1},z_{1,2},z_{2,2},z_{2,3}}$.
 The monodromy $\psi$ associates with the element $[\gamma_{i,j}]\in\pi_1(\CmP;*)$ the element $a_{i,j}\in\Q_+$.
 Since $[\gamma]=[\gamma_{2,2}]^{([\gamma_{1,1}][\gamma_{1,2}])^{-1}}$ in $\pi_1(\CmP;*)$, the monodromy $\psi$ associates
 with $[\gamma]$ the element $a_{2,2}^{([a_{1,1}][a_{1,2}])^{-1}}\in\Q$, where we use the adjoint action of $\cG(\Q)$ on $\Q$. 
}
 \label{fig:Ppsi}
\end{figure}

Let $\ua$ be a non-degenerate, admissible array in $\Arr_{p,q}(\hQ)$, for some $p,q\ge0$, and let
$\us=(0=s_0<s_1<\dots<s_{p+1}=1)$ and $\ut=(0=t_0<t_1<\dots<t_{q+1}=1)$ be the coordinates of a point in the interior
of $\Delta^p\times\Delta^q$. The datum of $\ua,\us$ and $\ut$ identifies a point in $\Hur^{\Delta}(\Q)$,
which we shall denote by $(\ua;\us,\ut)$.

Let $I(\ua)\subseteq\set{0,\dots,p+1}\times\set{0,\dots,q+1}$ denote the set
of pairs $(i,j)$ with $a_{i,j}\in\Q_+=\Q\setminus\set{\one}$; in fact $I(\ua)\subseteq\set{1,\dots,p}\times\set{1,\dots,q}$ because $\ua$ is admissible. Let $P\subset(0,1)^2\subset\C$ be the finite set of points of the form $z_{i,j}:=s_i+t_j\sqrt{-1}$,
for $(i,j)\in I(\ua)$. Let $*=-\sqrt{-1}$ be the basepoint of $\CmP$.

A monodromy around points of $P$ with values in $\Q$ is to be thought of as a function $\psi$ that
associates an element of $\Q_+$ with each
element $[\gamma]\in\pi_1(\CmP,*)$ represented by a
simple loop $\gamma\subset\CmP$ spinning clockwise
around precisely one of the points $z_{i,j}$.
There are usually infinitely many isotopy classes of such small simple loops $\gamma\subset\CmP$, and
it is convenient to consider only a finite collection of them, as follows.

For all $(i,j)\in I(\ua)$ let $U_{i,j}\subset(0,1)^2$ denote a small
disc around $z_{i,j}$, disjoint from $P\setminus\set{z_{i,j}}$. Then there is a unique element of $\pi_1(\CmP,*)$ that can be represented by a simple loop
$\gamma_{i,j}$ which is contained in the region
$(U_{i,j}\setminus z_{i,j})\cup\set{z\in\C\,|\,\Im(z)\le0}\cup \set{z\in\C\,|\, s_i<\Re(z)<s_{i+1}}$ and spins clockwise around $z_{i,j}$.
We set $\psi\colon[\gamma_{i,j}]\mapsto a_{i,j}$.

Note that the elements $[\gamma_{i,j}]$ exhibit $\pi_1(\CmP,*)$ as a free group.
For a simple loop $\gamma$ spinning around $z_{i,j}$ but not isotopic to $\gamma_{i,j}$, the monodromy
$\psi([\gamma])\in\Q$ can be obtained by conjugating the element $a_{i,j}$ by a suitable element of $\cG(\Q)$. These ideas are elaborated in \cite{Bianchi:Hur2};
in the Appendix \ref{sec:racks}
we will use this informal description of configurations in $\Hur^{\Delta}(\Q)$ to justify our focus on partially multiplicative \emph{quandles} $\Q$ instead of the more general notion of \emph{partially multiplicative rack}.

We can use the total monodromy to classify connected components of $\Hur^{\Delta}(\Q)$.
\begin{thm}
 \label{thm:Hurpi0}
Let $\Q$ be an augmented PMQ with completion $\hQ$. Then the map $\hat\totmon\colon\Hur^{\Delta}(\Q)\to\hQ$ induces a bijection on connected components.
\end{thm}
\begin{proof}
In the entire proof we focus on non-degenerate, admissible arrays of $\Arr(\Q)$, and abbreviate them as ``nda arrays''.
By the definition of $\Hur^{\Delta}(\Q)$, for each nda array $\ua\in \Arr_{p,q}(\Q)$ there is a geometric, open bisimplex $\mDelta^p\times\mDelta^q\subset\Hur^{\Delta}(\Q)$, which we denote by $\mDelta_{\ua}$: it is clearly a connected subspace of $\Hur^{\Delta}(\Q)$. Moreover, if $\ua'$ is another nda array of the form $d_i^{\hor}(\ua)$ or $d_i^{\ver}(\ua)$, then the open bisimplex $\mDelta_{\ua'}$ lies in the closure of $\mDelta_{\ua}$ inside $\Hur^{\Delta}(\Q)$, in particular it lies in the same connected component. We use these principle to define some operations taking as input a nda array and giving as output another nda array, such that the open bisimplices associated with input and output lie in the same connected component of $\Hur^{\Delta}(\Q)$.
\begin{enumerate}
 \item Let $\ua\in\Arr_{p,q}(\Q)$ have a column $\ua_i$, with $1\le i\le p$, containing two or more elements in $\Q_+$, and let $1\le j\le q$ be the minimal index with $a_{i,j}\neq\one$. We can define $\ua'\in\Arr_{p+1,q}(\Q)$ by replacing $\ua_i$ with the two columns $\ua'_i,\ua'_{i+1}$, where $a'_{i,j}=a_{i,j}$ is the only non-$\one$ entry in $\ua'_i$, and where $\ua'_{i+1}$ differs from $\ua_i$ only in that $a'_{i+1,j}=\one$. Then $\ua=d_i^{\hor}(\ua')$.
 \item Let $\ua\in\Arr_{p,q}(\Q)$ have at most one non-$\one$ entry in each of its columns. We can define $\ua'\in \Arr_{p,1}$ by setting $a'_{i,1}=c(\ua_i)_{q+1}$ and letting all other entries of $\ua'$ be $\one$. Then $\ua'$ is obtained from $\ua$ by applying $q-1$ vertical face maps.
\end{enumerate}
Using several times operations (1) and (2) we can transform each nda array $\ua$ into a nda array $\ua'$ lying inside $\Arr_{p,1}(\Q)$ for some $p\ge0$,\footnote{A nda array containing only $\one$'s can be transformed to the unique nda array in $\Arr_{0,0}(\Q)$ by iterated horizontal and vertical face maps, showing that $\Hur^{\Delta}(\Q)(\one)$ is connected.} so it suffices to classify connected components of bisimplices associated with the latter type of nda arrays.

A nda array $\ua\in\Arr_{p,1}$ is uniquely described by its sequence $(a_{1,1},\dots,a_{p,1})$ of elements of $\Q$, i.e. the inner part of the unique inner row. Moreover the total monodromy of each point in $\mDelta_{\ua}$ is the element $a_{1,1}\cdot\dots\cdot a_{p,1}\in\hQ$. Given two nda arrays $\ua\in\Arr_{p,1}$ and $\ua'\in\Arr_{p',1}$ with same total monodromy, we want to prove that $\mDelta_{\ua}$ and $\mDelta_{\ua'}$ lie in the same connected component of $\Hur^{\Delta}(\Q)$. The equality $a_{1,1}\cdot\dots\cdot a_{q,1}=a'_{1,1}\cdot\dots\cdot a'_{q',1}\in\hQ$ implies that it is possible to transform the sequence $(a_{1,1},\dots,a_{q,1})$ of elements of $\Q$ into the sequence $(a'_{1,1},\dots,a'_{q',1})$ by repeatedly applying the following moves (and their inverses):
\begin{itemize}
 \item standard moves: replace the pair $a_{i,1},a_{i+1,1}$ by the pair $a_{i+1,1},a_{i,1}^{a_{i+1,1}}$;
 \item multiplications: replace the pair $a_{i,1},a_{i+1,1}$ by the element $a_{i,1}a_{i+1,1}$, provided that this product is defined in $\Q$.
\end{itemize}
If the sequences $(a_{1,1},\dots,a_{q,1})$ and $(a'_{1,1},\dots,a'_{q',1})$ are connected by a multiplication, then we have $\ua'=d_i^{\hor}(\ua)$ or, vice versa, $\ua=d_i^{\hor}(\ua')$, so that $\mDelta_{\ua}$ and $\mDelta_{\ua'}$ lie in the same component of $\Hur^{\Delta}(\Q)$. It suffices therefore to prove that if $q=q'$ and if $(a'_{1,1},\dots,a'_{q,1})=(a_{1,1},\dots,a_{i-1,1},a_{i+1,1},a_{i,1}^{a_{i+1},1},\dots,a_{q,1})$, then $\mDelta_{\ua}$ and $\mDelta_{\ua'}$ are connected in $\Hur^{\Delta}(\Q)$.
For this define nda arrays $\ua^{(1)},\ua^{(2)}\in\Arr_{q,2}(\Q)$ as follows:
\begin{itemize}
 \item $\ua^{(1)}$ is obtained from the admissible, but degenerate array $s_1(\ua)\in\Arr_{2,q}$ (which is merely $\ua$ with an extra row of $\one$'s on top) by replacing the $i$\textsuperscript{th} column $(\one,a_{i,1},\one,\one)$ by the column $(\one,\one,a_{i,1},\one)$; we have $d^{\ver}_1(\ua^{(1)})=\ua$;
 \item similarly, $\ua^{(2)}$ is obtained from $s_1(\ua')\in\Arr_{2,q}$ by replacing the $(i+1)$\textsuperscript{st} column $(\one,a_{i,1}^{a_{i+1,1}},\one,\one)$ by the column $(\one,\one,a_{i,1}^{a_{i+1,1}},\one)$; we have $d^{\ver}_1(\ua^{(2)})=\ua'$.
\end{itemize}
We then note that $d_i^{\hor}(\ua^{(1)})=d_i^{\hor}(\ua^{(2)})$ is the same nda array in $\Arr_{p-1,2}(\Q)$, having $(\one,a_{i+1,1},a_{i,1},\one)$ as $i$\textsuperscript{th} column.
\end{proof}

\subsection{Functoriality of Hurwitz spaces in the PMQ}
Let $\Q$ and $\Q'$ be augmented PMQs, and let $\psi\colon\Q\to\Q'$ be an \emph{augmented} map of PMQs,
i.e. $\psi(\Q_+)\subset\Q'_+$; let $\psi\colon\hQ\to\hQ'$ denote the corresponding map between completions.
We can consider $\Arr(\Q)$ and $\Arr(\Q')$ as plain bisimplicial sets, under the forgetful
functors $U\colon ss\XSet(\hQ)\to ss\Set$ and $U\colon ss\XSet(\hQ')\to ss\Set$. Then the assignment
\[
\ua=(a_{i,j})_{i,j}\mapsto \psi(\ua)=(\psi(a_{i,j})_{i,j})
\]
defines a map of bisimplicial sets $\psi_*\colon U\Arr(\Q)\to U\Arr(\Q')$. This map sends admissible (respectively, non-degenerate)
bisimplices to admissible (respectively, non-degenerate) bisimplices; in particular it induces a map of spaces $\psi_*\colon |U\Arr(\Q)|\to|U\Arr(\Q')|$ which restricts
to a map between Hurwitz spaces
\[
\psi_*\colon \Hur^{\Delta}(\Q)\to\Hur^{\Delta}(\Q').
\]
This map is compatible with the total monodromy, i.e. $\psi_*$ restricts to a map $\Hur^{\Delta}(\Q)(a)\to\Hur^{\Delta}(\Q')(\psi(a))$ for all $a\in\hQ$.
Note that in general it is not true that $\psi_* \colon U\Arr(\Q)\to U\Arr(\Q')$
restricts to a map $U\NAdm(\Q)\to U\NAdm(\Q')$: for instance this almost never happens when $\Q'=\hQ$ and $\psi$ is the inclusion.

\subsection{Classical Hurwitz spaces as simplicial Hurwitz spaces}
We now turn our attention to the case in which $G$ is a discrete group, and $\Q$ is the PMQ $G\sqcup\set{\one_\Q}$: the unit of $\Q$ is the extra element $\one_{\Q}$, whereas the unit $\one_G$ of $G$ plays in the following no special role. We consider the trivial product on $\Q$, and conjugation on $\Q$ extends the group conjugation of $G$.
\begin{thm}
 \label{thm:HurwitzEVW}
 Let $\Q=G\sqcup\set{\one}$ as above; then
 $\Hur^{\Delta}(\Q)$ is homeomorphic to the disjoint union $\hur(G):=\coprod_{k\ge0}\hur_k(G)$ of the classical Hurwitz spaces with monodromies in $G$.
\end{thm}
\begin{proof}
 Consider first the case in which $G=\set{\one_G}$ is the trivial group; then $\Q$ is isomorphic to the abelian PMQ $\set{0,1}$ from Example \ref{ex:Ncoconnected}, where $\one_\Q=0$ and $\one_G=1$; the completion of $\Q=\set{0,1}$ is $\N$. The realisation $|\Arr(\set{0,1})|$ can be identified with the space $\SP([0,1]^2):=\coprod_{k\ge0}\SP^k([0,1]^2)$, i.e. the disjoint union of the symmetric powers of the closed unit square $[0,1]^2$, parametrising finite configurations of points in $[0,1]^2$ carriying a multiplicity in $\N$. The cell decomposition of $|\Arr(\set{0,1})|$ is a version of the Fox-Neuwirth-Fuchs cell decomposition of $\SP([0,1]^2)$. The subspace $|\NAdm(\set{0,1})|\subset|\Arr(\set{0,1})|$ corresponds to the union of the following two subspaces of $\SP[0,1]^2$:
 \begin{itemize}
  \item the \emph{fat diagonal} of $\SP([0,1]^2)$, i.e. the subspace of finite configurations having at least one point of multiplicity at least 2;
  \item the \emph{boundary} of $\SP([0,1]^2)$, i.e. the subspace of finite configurations having at least one point (of multiplicity at least 1) lying on $\del[0,1]^2$.
 \end{itemize}
 The complement $\Hur^{\Delta}(\set{0,1})=|\Arr(\set{0,1})|\setminus|\NAdm(\set{0,1})|$ is then identified with $C((0,1)^2):=\coprod_{k\ge0}C_k((0,1)^2)$, the disjoint union of the unordered configuration space of the open unit square. This is also the same as $\coprod_{k\ge0}\hur_k(\set{\one_G})$, as the monodromy, being a group homomorphism with target the trivial group, carries no information.

 For a generic group $G$, the construction from Subsection \ref{subsec:simplhur} gives rise to a continuous, bijective map $\epsilon\colon \Hur^{\Delta}(\Q)\to\hur(G)$; if we consider the augmented map of PMQs $\psi\colon\Q\to\set{0,1}$ sending $\one_\Q\mapsto 0$ and sending each element of $G$ to 1, then we obtain a commutative diagram
 \[
  \begin{tikzcd}
   \Hur^{\Delta}(\Q)\ar[dr,"\psi_*"]\ar[r,"\epsilon"] & \hur(G)\ar[d]\\
   & C((0,1)^2),
  \end{tikzcd}
 \]
where the vertical arrow is a covering map. It thus suffices to prove that also $\psi_*$ is a covering map, in order to conclude that $\epsilon$ is a homeomorphism.

The fact that $\psi_*$ is a covering map follows from the following property of the map of bisimplicial sets $\psi_*\colon U\Arr(\Q)\to U\Arr(\set{0,1})$, which is a direct consequence of the fact that $\Q$ has trivial product. Let $p,q\ge0$, let $\ua\in \Arr_{p,q}(\set{0,1})$ be a non-degenerate, admissible array, let $\star=\hor,\ver$ and let $\ua'\in\Arr(\set{0,1})$ be another non-degenerate, admissible array, satisfying $d_i^\star(\ua')=\ua$ for some $i$. Let $\tilde\ua\in \Arr_{p,q}(\Q)$ be an array with $\psi_*(\tilde\ua)=\ua$. Then there exists a unique non-degenerate, admissible array $\tilde\ua'\in\Arr(\Q)$ satisfying $\psi_*(\tilde\ua')=\ua'$ and $d_i^\star(\tilde\ua')=\tilde\ua$. The situation is summarised in the following diagram:
\[
 \begin{tikzcd}
  \exists !\ \tilde\ua'\ar[r,dashed, mapsto,"d_i^\star"]\ar[d,dashed,mapsto,"\psi_*"] &\tilde \ua \ar[d,mapsto,"\psi_*"]\\
  \ua'\ar[r,mapsto,"d_i^\star"] &\ua.
 \end{tikzcd}
\]
\end{proof}

\begin{ex}
 \label{ex:HurwitzEVW}
Let $G$ be a group and let $c\subseteq G$ be a conjugation invariant subset. Then $\Q'=c\sqcup\set{\one_{\Q}}$ is a sub-PMQ of the PMQ $\Q=G\sqcup\set{\one_{\Q}}$ from Theorem \ref{thm:HurwitzEVW}, and $\Hur^{\Delta}(\Q')$ can be identified with a union of connected components of $\Hur^{\Delta}(\Q')$.

Correspondingly, inside $\coprod_{k\ge0}\hur_k(G)$, we obtain a subspace $\coprod_{k\ge0}\hur_{k}^c(G)$, which is a union of connected components.
For fixed $k\ge0$, the space $\hur_k^c(G)$ parametrises branched $G$-coverings of $(0,1)^2$ with $k$ branch points and local monodromies in $c$. These spaces are one of the central objects of study in \cite{EVW:homstabhur,ETW:shufflealgebras,ORW:Hurwitz}.
\end{ex}

\subsection{The relative cellular chain complex}
Let $R$ be a commutative ring. In this subsection we describe the cellular chain complex
$\CH_*(|\Arr(\Q)|,|\NAdm(\Q)|;R)$.
Note that this chain complex is obtained from the $\hQ$-crossed pair of CW complex
$(|\Arr(\Q)|,|\NAdm(\Q)|)$, and as such is a $\hQ$-crossed chain complex in $R$-modules.
\begin{lem}
 \label{lem:Hurloccpt}
Let $\Q$ be a locally finite PMQ; then $\Hur^{\Delta}(\Q)$ is a locally compact topological space.
\end{lem}
\begin{proof}
Let $\ua\in\Arr_{p,q}(\Q)$ be a non-degenerate, admissible array; since $\Q$ is locally finite, then $\ua$ is the image of finitely many non-degenerate, admissible arrays in $\Arr(\Q)$ under iterated horizontal and vertical face maps: as a consequence of this observation, each point in $\Hur^{\Delta}(\Q)$ has a neighbourhood meeting only finitely many open bisimplices $\mDelta_{\ua}\subset\Hur^{\Delta}(\Q)$. This, together with the fact that $\Hur^{\Delta}(\Q)$ arises as difference of geometric realisations of two bisimplicial sets, ensures that $\Hur^{\Delta}(\Q)$ is locally compact.
\end{proof}
In particular, if $\Q$ is locally finite, then the compactly supported dual cellular cochain complex $\CH^*_{\mathrm{cpt}}(|\Arr(\Q)|,|\NAdm(\Q)|;R)$ computes the compactly supported cohomology of $\Hur^{\Delta}(\Q)$.

We will henceforth switch back to $\CH_*(|\Arr(\Q)|,|\NAdm(\Q)|;R)$ and compare this chain complex with the double bar construction applied to the algebra
pair $(R[\Q],R,\epsilon_{\Q})\in\Alg(\XMod_R(\hQ))^{[0,1]}$, where $R[\Q]$ is regarded
as a commutative algebra in $\XMod_R(\hQ)$ as in Example \ref{ex:RQ}; 
$R$ denotes the monoidal unit of
$\XMod_R(\hQ)$, and is thus an algebra in this category; and $\epsilon_{\Q} \colon R[\Q]\to R$ is the augmentation
given by $\sca a\mapsto 0$ for all $a\in\Q_+$ and $\sca{\one_{\Q}}\mapsto 1$.
\begin{thm}
\label{thm:redchain}
 The cellular chain complex $\CH_*(|\Arr(\Q)|,|\NAdm(\Q)|;R)$ is isomorphic, as $\hQ$-crossed
 chain complex in $R$-modules, to the reduced total chain complex associated with the bisimplicial
 $\hQ$-crossed $R$-module $B_{\bullet,\bullet}(R[\Q],R,\epsilon_{\Q})$.
\end{thm}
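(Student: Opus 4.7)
The plan is to exhibit, at each bidegree $(p,q)$, an isomorphism of $\hQ$-crossed $R$-modules between the relative cellular chains and the reduced part of $B_{p,q}(R[\Q], R, \epsilon_{\Q})$, and then to verify compatibility with all face maps and with the $\hQ$-crossed structures. First I would unwind the double bar construction explicitly. Using Definition \ref{defn:doublebarconstruction} with $A = R[\Q]$ and $B = R$ (the monoidal unit of $\XMod_R(\hQ)$), one finds $\mathcal{A}_q := B_q(R[\Q], R, \epsilon_{\Q}) \cong R[\Q]^{\otimes q}$ and $\mathcal{B}_q := B_q(R, R, \Id_R) = R$, so that $B_{p,q}(R[\Q], R, \epsilon_{\Q}) \cong R[\Q]^{\otimes pq}$. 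This $R$-module has a natural basis indexed by $\Q^{pq}$, and the diagonal $\hQ$-crossed structure from Example \ref{ex:RQ} corresponds to the entry-wise conjugation action on arrays. Identifying a $p \times q$ array in $\Q$ with the admissible $(p+2) \times (q+2)$ array obtained by framing it with $\one$'s gives a bijection on generators. Under this identification the horizontal (resp.\ vertical) degeneracies of $B_{\bullet,\bullet}$ insert an all-$\sca{\one}$ column (resp.\ row), so passing to the reduced complex yields precisely non-degenerate admissible arrays, which by Lemmas \ref{lem:ndegsemibisimplicial} and \ref{lem:NAdminArr} index the relative cells in $\CH_*(|\Arr(\Q)|, |\NAdm(\Q)|;R)$.

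Next I would verify that the differentials agree. For the middle faces $d_i^{\hor}$ with $1 \le i \le p-1$, I would compute the product of two adjacent tensor factors $\mathcal{A}_q \otimes \mathcal{A}_q \to \mathcal{A}_q$ using Definition \ref{defn:tensoralgebra} and the $\hQ$-crossed braiding of Definition \ref{defn:Xbraiding}. An induction on $q$ using $\braiding_{X,Y}(x \otimes y) = y \otimes x^c$ (for $y$ of grading $c$) yields the column-product formula
\[
 (\sca{a_1} \otimes \cdots \otimes \sca{a_q}) \cdot (\sca{b_1} \otimes \cdots \otimes \sca{b_q}) = \sca{a_1 b_1} \otimes \sca{a_2^{b_1} b_2} \otimes \cdots \otimes \sca{a_q^{b_1 \cdots b_{q-1}} b_q},
\]
which matches the formula of Lemma \ref{lem:facemaps} for the entries of the merged column. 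For the outer faces $d_0^{\hor}$ and $d_p^{\hor}$, the relevant structure map factors through $\epsilon_{\Q}^{\otimes q}$ and hence vanishes on any column with a non-unit entry; on the topological side these boundary faces land in $|\NAdm(\Q)|$ (violating admissibility of the outer frame) unless the extremal real column was already $\one$'s, so both reductions vanish in the same cases. The vertical faces use the honest (non-braided) multiplication of $R[\Q]$ applied levelwise in each column, reproducing the row-merging formula of Lemma \ref{lem:facemaps}, and the outer vertical faces vanish precisely when the corresponding row fails to consist of $\one$'s.

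Finally, I would verify that both sides carry the same $\hQ$-crossed structure (diagonal conjugation on tensor factors versus entry-wise conjugation of arrays; these agree tautologically), and that the total-complex signs $\sum_i (-1)^i d_i^{\hor} + (-1)^{p+j} d_j^{\ver}$ coincide with the cellular boundary of the product of simplices $\Delta^p \times \Delta^q$. The main obstacle is the horizontal face-map computation: the cumulative conjugations by partial column products appearing in Lemma \ref{lem:facemaps} must emerge systematically from iterating the $\hQ$-crossed braiding when expanding the algebra multiplication on $R[\Q]^{\otimes q}$. Once this identification is in place, the remainder of the argument is bookkeeping and a careful comparison of sign conventions.
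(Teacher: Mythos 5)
Your proposal is correct, but it takes a genuinely different route from the paper. You prove the theorem by brute force: identify $B_{p,q}(R[\Q],R,\epsilon_{\Q})\cong R[\Q]^{\otimes pq}$ with the free module on admissible $(p+2)\times(q+2)$ arrays, and then check by hand that every face and degeneracy map agrees with the combinatorial formulas of Lemma \ref{lem:facemaps} — in particular you must expand the braided tensor-algebra multiplication on $R[\Q]^{\otimes q}$ to recover the cumulative-conjugation column-merging formula, and observe that the outer faces factor through $\epsilon_{\Q}^{\otimes q}$ exactly when the topological face lands in $|\NAdm(\Q)|$. All of these checks do go through (your displayed product formula is the correct one, and the vanishing patterns on both sides match), so the argument is sound. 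The paper instead gets all of this for free from functoriality of the double bar construction: the quotient $R[\hQ]\to R[\Q]$ by the ideal generated by $\sca{\cJ(\Q)}$ and the augmentation $\epsilon_{\hQ}\colon R[\hQ]\to R$ assemble into a map of commutative algebra pairs $(R[\hQ],R[\hQ],\Id)\to(R[\Q],R,\epsilon_{\Q})$; since $R[-]$ carries $\Arr(\Q)=B_{\bullet,\bullet}(\hQ,\hQ,\Id)$ to $B_{\bullet,\bullet}(R[\hQ],R[\hQ],\Id)$, applying $B_{\bullet,\bullet}$ to this map produces a morphism of bisimplicial $\hQ$-crossed $R$-modules which in each bidegree is visibly the quotient by the span of non-admissible arrays, yielding the short exact sequence $0\to R[\NAdm(\Q)]\to R[\Arr(\Q)]\to B_{\bullet,\bullet}(R[\Q],R,\epsilon_{\Q})\to 0$ with no face-map verification at all; one then passes to reduced total complexes. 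Your approach buys a concrete understanding of why the formulas match (which is instructive, and is essentially the content the paper outsources to Lemma \ref{lem:facemaps}); the paper's approach buys brevity and robustness, since compatibility with faces, degeneracies, and the $\hQ$-crossed structure is automatic. If you keep your version, the one point to make fully explicit is the identification of the relative cellular differential with the unnormalized alternating sum — this uses that faces of non-degenerate arrays are non-degenerate (Lemma \ref{lem:ndegsemibisimplicial}), so no terms need to be skipped.
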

The proof of Theorem \ref{thm:redchain} is the content of the rest of the subsection.
First we note that $\CH_*(|\Arr(\Q)|;R)$ can be obtained from the bisimplicial
$\hQ$-crossed sets $\Arr(\Q)$ in a two-step process.

The first step of the process is to replace each $\Arr_{p,q}(\Q)\in\XSet(\hQ)$ with the corresponding
$R$-linearisation $R[\Arr_{p,q}(\Q)]\in\XMod_R$:
more precisely, we consider the free $R$-module functor
$R[-]\colon\Set\to \Mod_R$, which induces first a levelwise free $R$-module functor
$R[-]\colon\XSet(\hQ)\to\XMod_R(\hQ)$, and then a free $R$-module functor $R[-]\colon ss\XSet(\hQ)\to ss\XMod_R(\hQ)$. We apply the latter functor to $\Arr(\Q)$.

In a similar fashion we can apply the functor $R[-]$ to $\NAdm(\Q)\in ss\XSet(\hQ)$.
The inclusion of bisimplicial $\hQ$-crossed sets $\NAdm(\Q)\to \Arr(\Q)$ induces an inclusion of bisimplicial $\hQ$-crossed $R$-modules
$R[\NAdm(\Q)]\hookrightarrow R[\Arr(\Q)]$; note now that $ss\XMod_R(\hQ)$ is an abelian category,
because it is a category of functors into $\Mod_R$; hence we can
consider the \emph{quotient}
\[
R[\Arr(\Q)]/R[\NAdm(\Q)]\in ss\XMod_R(\hQ),
\]
given levelwise, for $p,q\ge0$, by the $\hQ$-crossed $R$-module
\[
\pa{R[\Arr(\Q)]/R[\NAdm(\Q)]}_{p,q}=R[\Arr(\Q)]_{p,q}/R[\NAdm(\Q)]_{p,q}.
\]
We have an isomorphism of free $R$-modules $R[\Arr(\Q)]_{p,q}\cong(R[\hQ])^{\otimes (p+2)\times(q+2)}$;
in other words, $R[\Arr(\Q)]_{p,q}$ can be regarded as the free $R$-module on the set of all arrays of size $(p+2)\times(q+2)$ with entries in $\hQ$;
the submodule $R[\NAdm(\Q)]_{p,q}$ is then spanned by the non-admissible arrays of size 
$(p+2)\times(q+2)$.

Keeping the previous analysis in mind, recall
Definition \ref{defn:AlgbfA}, and note that there is a map
$(R[\hQ],R[\hQ],\Id_{R[\hQ]})\to(R[\Q],R,\epsilon_{\Q})$ of algebra pairs in $\XMod_R(\hQ)$,
given by the horizontal arrows in the commutative square
\[
 \begin{tikzcd}[column sep=3cm]
  R[\hQ]\ar[r,"-/\sca{\cJ(\Q)}"]\ar[d,"\Id_{R[\hQ]}"] & R[\Q]\ar[d,"\epsilon_{\Q}"]\\
  R[\hQ]\ar[r,"\epsilon_{\hQ}"] & R.
 \end{tikzcd}
\]
Here $\epsilon_{\hQ}\colon R[\hQ]\to R$ denotes the augmentation sending $a\mapsto 0$ for $a\in\hQ_+$ and
$\one_{\hQ}\mapsto 1$; note that $\epsilon$ is a map of algebras in $\XMod_R(\hQ)$, since $\hQ$
is augmented. Similarly we define the map of algebras $\epsilon_{\Q}\colon R[\Q]\to R$.
The map $-/\sca{\cJ(\Q)}$ is the quotient by the two-sided ideal of $R[\hQ]$ generated by the elements
$\sca a$ for $a\in\cJ(\Q)=\hQ\setminus\Q$.

Applying the double bar construction we obtain a morphism of bisimplicial $\hQ$-crossed $R$-modules
\[
(-/\sca{\cJ(\Q)},\epsilon_{\hQ})\colon R[\Arr(\Q)]=B_{\bullet,\bullet}(R[\hQ],R[\hQ],\Id_{R[\hQ]})\to
B_{\bullet,\bullet}(R[\Q],R,\epsilon_{\Q}),
\]
which for fixed $p,q\ge0$ restricts to a morphism of $\hQ$-crossed $R$-modules
\[
\begin{tikzcd}
(\, -/\sca{\cJ(\Q)}\ ,\ \epsilon_{\hQ}\,)_{p,q}\ar[r,phantom,"\colon"]& R[\Arr(\Q)]_{p,q}\ar[d,phantom,"\cong"]\ar[r] & B_{p,q}(R[\Q],R,\epsilon_{\Q})\ar[d,phantom,"\cong"] \\
 &  R[\hQ]^{\otimes (p+2)\times (q+2)} & R[\Q]^{\otimes p\times q}.
\end{tikzcd}
\]
The rightmost isomorphism comes from interpreting $B_{p,q}(R[\Q],R,\epsilon_{\Q})$ as the free $R$-module
on the set of admissible arrays of size $(p+2)\times (q+2)$.
The map $(-/\sca{\cJ(\Q)},\epsilon_{\hQ})_{p,q}$ is then precisely the quotient by the submodule
spanned by non-admissible arrays, and therefore we have a short exact sequence in $ss\XMod_R(\hQ)$
\[
 0\to R[\NAdm(\Q)] \to R[\Arr(\Q)]\to B_{\bullet,\bullet}(R[\Q],R,\epsilon_{\Q})\to 0.
\]

The second step replaces a bisimplicial $R$-module $M=M_{\bullet,\bullet}$ with its
reduced, total chain complex $\bar{\CH}_\bullet(M)$; we set
\[
\bar{\CH}_n(M)=\oplus_{i+j=n}\bar M_{i,j},
\]
where $\bar M_{i,j}$ is the quotient of $M_{i,j}$ by the sum of all images of all degeneracy
maps in $M$ with target $M_{i,j}$, both horizontal and vertical.
The differential $\del\colon \bar{\CH}_n(M)\to \bar{\CH}_{n-1}(M)$
is induced on the summand $\bar M_{i,j}$ by the formula $\sum_{i'=0}^i(-1)^{i'} d_{i'}^{\hor}+(-1)^i\sum_{j'=0}^j(-1)^{j'}d_{j'}^{\ver}$.

We call $\bar{\CH}_{\bullet}(M)$ the reduced, total chain complex associated with the
bisimplicial $\hQ$-crossed $R$-module $M$: it is a $\hQ$-crossed chain complex in $R$-modules.

In particular we have an isomorphism of chain complexes
\[
 \bar{\CH}_{\bullet}\pa{R[\Arr(\Q)]/R[\NAdm(\Q)]}\cong 
 \bar{\CH}_{\bullet}\pa{B_{\bullet,\bullet}(R[\Q],R,\epsilon_{\Q})},
\]
and the left hand side is naturally identified with $\CH_*(|\Arr(\Q)|,|\NAdm(\Q)|;R)$.

\subsection{Poincar\'e PMQs}
Let $\Q$ be an augmented PMQ. As a space, $\Hur^{\Delta}(\Q)$ is the difference of two CW complexes
$|\Arr(\Q)|\setminus|\NAdm(\Q)|$, so in particular it has no evident
CW structure; this prevents us to find, for instance, a simple, combinatorial
chain complex to study the homology of $\Hur^{\Delta}(\Q)$, as for example a cellular chain complex would be.
In a particular situation, which is described in the following definition, we can circumvent this problem.

\begin{defn}
 \label{defn:Poincare}
Let $\Q$ be a locally finite PMQ with completion $\hQ$. We say that $\Q$ is
\emph{Poincar\'e} if, for all $a\in\hQ$, the space
$\Hur^{\Delta}(\Q)(a)$ is a topological manifold of some dimension.
\end{defn}
The hypothesis that $\Q$ is locally finite in Definition
\ref{defn:Poincare} is to ensure that the space $\Hur^{\Delta}(\Q)$ is locally compact, by virtue of Lemma \ref{lem:Hurloccpt}.

\begin{ex}
 \label{ex:trivialPoincare}
 Let $\Q$ be a PMQ with trivial multiplication,
 define the norm $N\colon \Q\to\N$ by setting $N(a)=1$
 for all $a\in\Q_+$, let $\hQ$ be the completion of $\Q$ and let $N\colon\hQ\to\N$ be the extension of the norm.

 Let $a\in\hQ$ and set $k=N(a)$: then $\Hur^{\Delta}(\Q)(a)$ is a covering space of $C_k((0,1)^2)$, the $k$\textsuperscript{th}
 unordered configuration space of $(0,1)^2$, by the same argument used in Theorem \ref{thm:HurwitzEVW}.
 Since $C_k((0,1)^2)$ is an orientable manifold of dimension $2n$, the same holds
 for the space $\Hur^{\Delta}(\Q)(a)$. It follows that $\Q$ is Poincar\'e. Note also that the intrinsic pseudonorm $h\colon\Q\to\N$ is in fact a norm and coincides with $N$.
\end{ex}

\begin{ex}
 \label{ex:SP}
 Let $\Q=\N$ or $\Q=\set{0,1,\dots,n}\subset\mathbb{N}$ as in Example \ref{ex:1ton}. Then
 the completion $\hQ$ of $\Q$ is canonically identified with $\N$; for all $k\in\N$
 the space $\Hur^{\Delta}(\hQ)(k)$ is homeomorphic to the $k$-fold symmetric power $\mathrm{SP}^{k}((0,1)^2)$,
 which is homeomorphic to $\R^{2k}$. It follows that $\Q$ is Poincar\'e. Note also that the intrinsic
 pseudonorm $h\colon\Q\to\N$ is a norm and coincides with the natural inclusion of $\Q$ into $\N$. For $n\ge2$,
$\Q=\set{0,1,\dots,n}$ is an example of a Poincar\'e but not Koszul PMQ.
\end{ex}
\begin{ex}
\label{ex:segreagain}
Let $\Q$ be as in Example \ref{ex:segre}. Then $\Hur^{\Delta}(\Q)(c)$ is homeomorphic
to the union of two copies of $(0,1)^2\times(0,1)^2$ along their diagonal subspace
$(0,1)^2\subset(0,1)^2\times(0,1)^2$.
In fact, $\Hur^{\Delta}(\Q)(c)$ can be regarded as the union of
$\Hur^{\Delta}(\set{\one,a,b,c})(c)$ and
$\Hur^{\Delta}(\set{\one,a',b',c})(c)$ along $\Hur^{\Delta}(\set{\one,c})(c)$.

If $\fc\in\Hur^{\Delta}(\Q)(c)$ lies in $\Hur^{\Delta}(\set{\one,c})(c)$, then the local homology
group
\[
H_i\pa{\Hur^{\Delta}(\Q)(c)\,,\, \Hur^{\Delta}(\Q)(c)\setminus\set{\fc}\ ;\ R }
\]
is isomorphic to $R$ for $i=3$ and to $R^2$ for $i=4$. It follows that $\Q$ is not Poincar\'e, although it is Koszul.
\end{ex}

\begin{prop}
\label{prop:Poincareintrinsic}
 Let $\Q$ be a Poincar\'e PMQ. Then $\Q$ is maximally decomposable and admits an intrinsic norm
 $h\colon\Q\to\N$ in the sense of Definitions \ref{defn:irreducible} and \ref{defn:intrinsicnorm}.
 
 Moreover for all
 $b\in\hQ$ the space $\Hur^{\Delta}(\Q)(b)$ is an oriented topological manifold of dimension $2h(b)$,
 where we extend the intrinsic norm to $h\colon\hQ\to \N$.
\end{prop}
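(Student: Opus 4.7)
The plan is to exploit the cell structure on $\Hur^{\Delta}(\Q)(b)$ coming from non-degenerate admissible arrays (Subsection \ref{subsec:simplhur}) and to relate cell dimensions to the pseudonorm $h$, using the Poincar\'e assumption to control local dimensions.

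First I would establish that $\dim \Hur^{\Delta}(\Q)(b) = 2h(b)$ for every $b \in \hQ$, which in particular forces $h(b) < \infty$. For the lower bound, any decomposition $b = b_1 \dots b_r$ with $b_i \in \Q_+$ gives a \emph{staircase} array $\ua \in \Arr_{r,r}(\Q)$ with $a_{i,i} = b_i$ and all other entries $\one$; this array is non-degenerate, admissible, and represents $b$, and its open cell has dimension $2r$. Being embedded as a subspace of the topological manifold $\Hur^{\Delta}(\Q)(b)$, it forces $2r \le \dim \Hur^{\Delta}(\Q)(b)$; taking the supremum gives $2h(b) \le \dim \Hur^{\Delta}(\Q)(b) < \infty$. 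For the upper bound, given any non-degenerate admissible array $\ua \in \Arr_{p,q}(\Q)$ representing $b$, non-degeneracy implies $p, q \le n$, where $n$ is the number of entries of $\ua$ lying in $\Q_+$ (since each inner row and each inner column must contain at least one non-trivial entry); reading these entries off gives a $\Q_+$-decomposition of $b$ of length $n$, so $n \le h(b)$. Hence top-dimensional cells, whose dimension equals the manifold dimension, satisfy $p+q \le 2h(b)$.

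Next I would prove that every decomposition of $a \in \Q$ into irreducibles has length exactly $h(a)$, which together with finiteness yields the intrinsic norm and maximal decomposability of $\Q$. Let $a = a_1 \dots a_r$ with each $a_i$ irreducible in $\Q$, and let $\ua$ be the corresponding staircase array of dimension $2r$. The key claim is that no non-degenerate admissible array of dimension $2r+1$ has $\ua$ as a face. Unpacking Lemma \ref{lem:facemaps}, a candidate preimage $\ua'$ under a horizontal face map $d^{\hor}_j$ must have its columns $j$ and $j+1$ all equal to $\one$ except possibly at a single row, where the combined product must equal an irreducible $a_i$; using that $\hQ \setminus \Q$ is an ideal (Proposition \ref{prop:hQ}) and that $\hQ$ is augmented, this forces either $\ua'$ to contain an inner column or row of $\one$'s, or a factorisation of $a_i$ as a product of two elements of $\Q_+$ defined in $\Q$---contradicting irreducibility in either case. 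The vertical face maps are handled symmetrically. Consequently the open $2r$-cell of $\ua$ is open in $\Hur^{\Delta}(\Q)(a)$, so a point inside it has a neighborhood homeomorphic to $(0,1)^{2r}$, and the manifold property forces $2r = 2h(a)$.

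For orientability, each top-dimensional cell corresponds, via the description in Subsection \ref{subsec:simplhur}, to a configuration of $h(b)$ distinct points in the square $(0,1)^2 \subset \C$ decorated by labels in $\Q_+$; the complex orientation of $(0,1)^2$ pulls back through the forgetful map to the unordered configuration space $C_{h(b)}((0,1)^2)$ to a canonical orientation on each top cell, and the codimension-one gluings between top cells (which correspond to crossings of adjacent points in $(0,1)^2$) preserve this orientation, giving a global orientation on $\Hur^{\Delta}(\Q)(b)$. The hard part will be the face-map analysis in the second step: one must combine the explicit formulas of Lemma \ref{lem:facemaps} with the ideal characterisation of $\hQ \setminus \Q$ from Proposition \ref{prop:hQ} to systematically rule out every candidate non-degenerate admissible preimage of the irreducible staircase.
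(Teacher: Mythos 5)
Your proposal is correct and follows essentially the same route as the paper: staircase arrays realise each $\Q_+$-decomposition as an open cell, the entry count bounds $p+q$ by twice the decomposition length, invariance of domain pins the manifold dimension to $2h(b)$, and orientability comes from the complex orientation of the underlying configuration space on the union of cells of codimension $\le 1$. Your explicit face-map analysis showing that the staircase cell of an \emph{arbitrary} irreducible decomposition is open in $\Hur^{\Delta}(\Q)(b)$ is precisely the point the paper leaves implicit (its assertion that such a cell is maximal), so your write-up is, if anything, more complete there.
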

\begin{proof}
Let $b\in\Q$. Since $\Q$ is locally finite, $b$ can be decomposed in finitely many ways
as a product $b_1\dots b_r$ of elements of $\Q_+$. Fix a maximal decomposition
$b=\bar b_1\dots \bar b_r$, with all $\bar b_i\in\Q_+$ irreducible, for some $r\ge0$; then we can define an
admissible, non-degenerate array $\ua\in\Arr_{r,r}(\Q)(b)$ by setting $a_{i,i}=\bar b_i$
for all $1\le i\le r$, and all other entries $a_{i,j}=\one$.

The array $\ua$ is of maximal dimension among the admissible, non-degenerate arrays in $\Arr(\Q)(b)$,
so it corresponds to a maximal open cell of the cell stratification of $\Hur^{\Delta}(\Q)(b)$.
It follows that $\Hur^{\Delta}(\Q)(b)$ is a manifold of dimension $2r$. The same equality
holds for every other maximal decomposition of $b$ in elements of $\Q_+$: this implies
both that $\Hur^{\Delta}(\Q)(b)$ is a manifold of dimension $2h(b)$ (in the sense of Definition
\ref{defn:irreducible}), and that $h\colon\Q\to \N$ satisfies the conditions of Definition \ref{defn:intrinsicnorm}
and is thus a norm on $\Q$.

Let $h\colon\hQ\to \N$ be the extension of the intrinsic norm, and let $b\in\hQ$; we can again fix a maximal
decomposition $b=\bar b_1\dots \bar b_{h(b)}$, yielding by the same argument a maximal cell in the cell stratification
of $\Hur^{\Delta}(\Q)(b)$. This implies that $\Hur^{\Delta}(\Q)(b)$, which is 
a manifold, must be a manifold of dimension $2h(b)$.

We are left to check orientability of $\Hur^{\Delta}(\Q)(b)$ for $b\in\hQ$.
Let $\Q_{\le1}\subset\Q$ be the subset of elements of norm $\le1$, and note that $\Q_{\le1}$
has a natural structure of PMQ with trivial multiplication. The inclusion $\Q_{\le1}\subset\Q$ is a map of augmented PMQs, so it induces a map of Hurwitz spaces $\Hur^{\Delta}(\Q_{\le1})\to\Hur^{\Delta}(\Q)$.

This map is injective, since the map of bisimplicial sets $\Arr(\Q_{\le1})\to\Arr(\Q)$ restricts to an injective map between admissible arrays; note however that the completion
$\widehat{\Q_{\le1}}$ of $\Q_{\le1}$ a priori only surjects and is not in bijection with $\hQ$, so a priori we only know that $\Hur^{\Delta}(\Q_{\le1})\to\Hur^{\Delta}(\Q)$ is surjective on connected components, thanks to Theorem \ref{thm:Hurpi0}.

The complement of the image of $\Hur^{\Delta}(\Q_{\le1})\hookrightarrow|\Arr(\Q)|$ is the geometric
realisation of the sub-bisimplicial complex of $\Arr(\Q)$ spanned by all arrays with
at least one entry of norm $\ge2$. It follows that we can regard
$\Hur^{\Delta}(\Q_{\le1})$ as an open subspace of $|\Arr(\Q)|$, and hence as an open
subspace of $\Hur^{\Delta}(\Q)$.

Let $\Hur^{\Delta}(\Q_{\le1})(b)$
denote the intersection of $\Hur^{\Delta}(\Q_{\le1})$ with $\Hur^{\Delta}(\Q)(b)$. Note that this is a priori
an abuse of notation: since $b$ is an element of $\hQ$ and not of $\widehat{\Q_{\le1}}$, there may be a priori several $b'\in\widehat{\Q_{\le1}}$ mapping to $b$ under $\widehat{\Q_{\le1}}\to\hQ$.

Note now that all maximal cells of the cell stratification of $\Hur^{\Delta}(\Q)(b)$ correspond to arrays
$\ua\in\Arr_{h(b),h(b)}(\Q)(b)$ of the following special form: there exists
a maximal decomposition $b=\bar b_1\dots\bar b_{h_b}$ of $b$ into irreducible elements of $\Q_+$, and there
exists a permutation $\sigma\in\fS_{h(b)}$, such that $a_{i,h(i)}=\bar b_i$ for all $1\le i\le h(b)$,
and all other entries $a_{i,j}$ are equal to $\one$. All such arrays are in the image of
$\Arr(\Q_{\le1})\to\Arr(\Q)$, because all $\bar b_i$ belong to $\Q_{\le1}$; hence
$\Hur^{\Delta}(\Q_{\le1})(b)\subset\Hur^{\Delta}(\Q)(b)$ is a \emph{dense} open subset.

Finally, note that in fact all cells of $\Hur^{\Delta}(\Q)(b)$ of dimension $\ge 2h(b)-1$
correspond to arrays $\ua\in\Arr_{h(b),h(b)-1}(\Q)$ or $\ua\in\Arr_{h(b)-1,h(b)}$
with a similar description as above, but allowing precisely one inner row or one inner column with
two entries different from $\one$. In particular the complement of
$\Hur^{\Delta}(\Q_{\le1})(b)$ in $\Hur^{\Delta}(\Q)(b)$ has codimension $\ge2$ in the manifold
$\Hur^{\Delta}(\Q)(b)$.

This implies that $\Hur^{\Delta}(\Q_{\le1})(b)$ is in fact connected, and moreover it allows us to check only that $\Hur^{\Delta}(\Q_{\le1})(b)$ is orientable; the latter statement follows from Example \ref{ex:trivialPoincare}.
\end{proof}

If $\Q$ is a Poincar\'e PMQ and $R$ is a commutative ring,
then for all $a\in\hQ$ we can apply Poincar\'e-Lefschetz duality and obtain
an isomorphism
\[
 H_*(\Hur^{\Delta}(\Q)(a);R)\cong H^{2h(a)-*}_{\mathrm{cpt}}(\Hur^{\Delta}(\Q)(a);R);
\]
here $H^{2h(a)-*}_{\mathrm{cpt}}$ denotes cohomology with compact support.
This can be computed, in principle, using the compactly supported cellular cochain complex
of the couple $\pa{|\Arr(\Q)|,|\NAdm(\Q)|}$: the cochain complex $\CH^*_{\mathrm{cpt}}\pa{|\Arr(\Q)|,|\NAdm(\Q)|;R}$
consists of free $R$-modules, and has a
generator $\ua^*$ in degree $p+q-h(b)$ for every admissible, non-degenerate array $\ua\in\Arr_{p,q}(\Q)(b)$.

\begin{prop}
 \label{prop:Poincarecoconnected}
 Let $\Q$ be a Poincar\'e PMQ. Then $\Q$ is coconnected.
\end{prop}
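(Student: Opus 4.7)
The plan is to deduce coconnectedness by realising two maximal decompositions of $a \in \Q$ as top-dimensional cells of the manifold $M := \Hur^{\Delta}(\Q)(a)$ and connecting them via codimension-one adjacencies, exploiting path-connectedness of $M$. First I would apply Proposition \ref{prop:Poincareintrinsic} to conclude that $\Q$ is maximally decomposable with intrinsic norm $h$. The task then reduces to showing that, for $a \in \Q$ with $r := h(a) \geq 2$, any two maximal decompositions $(b_1,\dots,b_r)$ and $(b_1',\dots,b_r')$ of $a$ are related by standard moves. Each such decomposition determines a top cell of $M$ of dimension $2r$, obtained from the diagonal array $\ua \in \Arr_{r,r}(\Q)(a)$ with $a_{i,i} = b_i$ and all other entries equal to $\one$.

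The main step, and the key topological input, is to show that $M$ is path-connected. Consider the array $\ua^{\mathrm{coll}} \in \Arr_{1,1}(\Q)(a)$ whose unique inner entry is $a$; this contributes a non-degenerate admissible $2$-cell of $M$. Iterated application of the face maps $d_1^{\hor}$ and $d_1^{\ver}$, combined with the formulae of Lemma \ref{lem:facemaps}, shows that $\ua^{\mathrm{coll}}$ lies in the closure of every top cell of $M$, representing the total collision of the $r$ separate points into a single point labelled $a$. Fixing $x$ in this $2$-cell, a small punctured neighbourhood of $x$ in $M$ is connected because $M$ is a topological manifold of dimension $2r \geq 2$, and this punctured neighbourhood meets every top cell non-trivially. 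Consequently all top cells lie in one path-component, so $M$ is connected. This is the conceptual heart of the argument and the main obstacle: without the existence of a common cell in the closure of all top cells, the manifold hypothesis alone would not force different decomposition classes to interact.

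Once connectivity is established, removing the codimension-$\geq 2$ strata from the connected topological manifold $M$ of dimension $\geq 2$ preserves path-connectedness by standard general position. Any two top cells $T, T'$ can therefore be joined by a path meeting only top cells and codimension-$1$ cells, producing a finite chain of top cells in which each consecutive pair shares a codimension-$1$ face.

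Finally, a direct combinatorial analysis using Lemma \ref{lem:facemaps} identifies the two possible adjacency types. Two top cells sharing an admissible horizontal codimension-$1$ face have decompositions differing by a standard move in the sense of Definition \ref{defn:standardmove}, while two sharing a vertical codimension-$1$ face correspond to the same decomposition (they differ only in the permutation recording the rows of the non-trivial entries). A minor point to verify along the way is that the standard-move partner of an admissible top cell is itself admissible, i.e.\ that the conjugated entry produced by a standard move still lies in $\Q$; this is automatic, since the quandle conjugation $(-)^b \colon \hQ \to \hQ$ restricts to a bijection of $\Q \subset \hQ$. Concatenating the standard moves read off along the chain yields the required sequence relating the two prescribed decompositions.
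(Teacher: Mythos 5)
Your argument is correct and follows essentially the same route as the paper: connectivity of $\Hur^{\Delta}(\Q)(b)$ via the minimal cell $\ua^{\mathrm{coll}}$, the manifold hypothesis together with the codimension-$\ge 2$ complement to pass to the union of the top and codimension-$1$ cells, and the translation of codimension-$1$ adjacencies into standard moves. The only (harmless) presentational difference is that you carry out the horizontal/vertical face analysis explicitly, whereas the paper packages it by identifying the open dense subspace $\Hur^{\Delta}(\Q_{\le1})(b)$ with a covering of configuration space whose components are exactly the standard-move classes of decompositions.
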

\begin{proof}
 Let $\Q_{\le1}$ be as in the proof of Proposition \ref{prop:Poincareintrinsic}, let $b\in\Q$
 and consider the open, dense subspace $\Hur^{\Delta}(\Q_{\le1})(b)\subset\Hur^{\Delta}(\Q)(b)$.
 We observe the following:
 \begin{itemize}
  \item The connected components of $\Hur^{\Delta}(\Q_{\le1})(b)$ are in bijection with the equivalence
  classes of decompositions $(\bar b_1,\dots,\bar b_{h(b)})$ of $b$ with respect to $\Q_1$, where two decompositions
  are considered equivalent if they are connected by a sequence of standard moves (see Definition
  \ref{defn:standardmove}, and compare with Definition \ref{defn:coconnected}).
  \item The space $\Hur^{\Delta}(\Q)(b)$ is connected: this follows from the fact that
  there is a unique admissible array of minimal dimension in $\Arr(\Q)(b)$, namely
  the array $\ua\in\Arr_{1,1}(\Q)$ with $a_{1,1}=b$ and all other entries equal to $\one$;
  this array is the image of every other admissible array in $\Arr(\Q)(b)$
  under iterated horizontal and vertical face maps.
 \end{itemize}
 As argued in the proof of Proposition \ref{prop:Poincareintrinsic}, the open inclusion
 $\Hur^{\Delta}(\Q_{\le1})(b)\subset\Hur^{\Delta}(\Q)(b)$ has complement of codimension $\ge2$,
 hence it is a bijection on $\pi_0$.
\end{proof}
We conclude the section by considering a locally finite and coconnected PMQ $\Q$, and by defining a canonical fundamental homology class for the couple of spaces $(|\Arr(\Q)(b)|,|\NAdm(\Q)(b)|)$, for all $b\in\hQ$.
In the following we understand integral coefficients for homology.

Let $\Q_{\le1}$ be as in the proof of Proposition \ref{prop:Poincareintrinsic}, and recall that by Lemma \ref{lem:coconnected} the completions of $\Q_{\le1}$ and $\Q$ agree. Let $b\in\hQ$,
and consider again the open subspace $\Hur^{\Delta}(\Q_{\le1})(b)\subset\Hur^{\Delta}(\Q)(b)$.

By Example \ref{ex:trivialPoincare}
the space $\Hur^{\Delta}(\Q_{\le1})(b)$ is a finite covering of the unordered configuration space
$C_{h(b)}((0,1)^2)$, in particular it admits a natural structure of complex manifold
and thus a canonical orientation. Moreover, since $\Q$ is coconnected, $\Hur^{\Delta}(\Q_{\le1})(b)$
is connected, i.e. there is a canonical ground class in $H_0(\Hur^{\Delta}(\Q_{\le1})(b))$.
By Poincar\'e-Lefschetz duality we get a canonical
fundamental class in the homology group
$H_{2h(b)}\pa{|\Arr(\Q)(b)|\,,\,|\Arr(\Q)(b)|\setminus \Hur^{\Delta}(\Q_{\le1})(b)}$.

Note now that there is a canonical isomorphism of homology groups
\[
\begin{tikzcd}
H_{2h(b)}\pa{|\Arr(\Q)(b)|\,,\,|\NAdm(\Q)(b)|}\ar[d,equal]\\
H_{2h(b)}\pa{|\Arr(\Q)(b)|\,,\,|\Arr(\Q)(b)|\setminus \Hur^{\Delta}(\Q)(b)}
\ar[d,"\cong"]\\
H_{2h(b)}\pa{|\Arr(\Q)(b)|\,,\,|\Arr(\Q)(b)|\setminus \Hur^{\Delta}(\Q_{\le1})(b)}
\end{tikzcd}
\]
due to the fact that all bisimplices of $|\Arr(\Q)(b)|$ of dimension $\ge 2h(b)-1$ are contained
in $\Hur^{\Delta}(\Q_{\le1})_b\subset\Hur^{\Delta}(\Q)(b)$.

\begin{defn}
\label{defn:fundamentalclass}
Let $\Q$ be a coconnected PMQ and $b\in\hQ$. We denote by
\[
[\Arr(\Q)(b),\NAdm(\Q)(b)]\in H_{2h(b)}\pa{|\Arr(\Q)(b)|,|\NAdm(\Q)(b)|;\Z}
\]
the fundamental class constructed above. It is a generator of the homology group $H_{2h(b)}\pa{|\Arr(\Q)(b)|,|\NAdm(\Q)(b)|;\Z}$;
more generally, for any commutative ring $R$, changing the coefficients from $\Z$ to $R$ yields a canonical
fundamental class
\[
[\Arr(\Q)(b),\NAdm(\Q)(b)]\in H_{2h(b)}\pa{|\Arr(\Q)(b)|,|\NAdm(\Q)(b)|;R}\cong R,
\]
generating the top homology group as a free $R$-module of rank 1.
\end{defn}

\part{Geodesic PMQs associated with symmetric groups}
\section{Geodesic PMQs associated with symmetric groups}
\label{sec:fSgeo}
The main examples of PMQs that we study in this series of articles come from symmetric groups, considered as normed groups.
In this section, for $d\ge2$, we
consider the PMQ $\fS_d\geo$, coming from the symmetric group on $d$ letters
endowed with the word length norm with respect to all transpositions. This is the specialisation to symmetric groups of the \emph{absolute length}, a well-studied notion for arbitrary Coxeter groups; see for instance \cite[Section 2.4]{Armstrong}.

We will show that $\fS_d\geo$ is coconnected, pairwise determined and Koszul.
In fact $\fS_d\geo$ is also Poincar\'e (see Definition \ref{defn:Poincare}): the proof of this fact will be given in \cite[Theorem 4.1]{Bianchi:Hur4}.

The importance of the PMQs $\fS_d\geo$ relies on the connection between
the Hurwitz spaces $\Hur^{\Delta}(\fS_d\geo)$, for varying $d\ge2$,
and the moduli spaces $\fM_{g,n}$ of Riemann surfaces of genus $g$ with $n$ ordered and parametrised boundary curves; this connection will be given in \cite{Bianchi:Hur4}.

\subsection{The PMQ \texorpdfstring{$\fS_d\geo$}{Sdgeo} and its enveloping group}

\begin{defn}
\label{defn:symgroup}
For all $d\geq 2$ we will denote by $\fS_d$ the group
of permutations of the set $[d]=\set{1,\dots,d}$.
For $i\neq j\in[d]$ we denote $(i,j)\in\fS_d$ the transposition that exchanges $i$ and $j$.
We consider on $\fS_d$ the word length norm $N$ with respect to the generating set of all transpositions:
for $\sigma\in\fS_d$, $N(\sigma)$ is the smallest $m\geq 0$ such that there exist transpositions $\tr_1,\dots,\tr_m\in\fS_d$
with $\sigma=\tr_1\dots\tr_m$.
\end{defn}
Note that the sign of a permutation $\sigma$ (i.e. the image of $\sigma$ under
the unique surjective map of groups $\fS_d\to \set{\pm 1}$)
can be written as $(-1)^{N(\sigma)}$.
By Definition \ref{defn:Ggeo} we obtain a PMQ $\fS_d\geo$. Our first aim is
to compute the group $\cG(\fS_d\geo)$.

\begin{lem}
 \label{lem:cGfSdgeo}
 Let $d\geq 2$ and let $\tfS_d\subset \Z\times\fS_d$ be the index 2 subgroup containing pairs
 $(r,\sigma)\in\Z\times\fS_d$
 such that $r$ has the same parity as $\sigma$.
 
 Then the norm and the map $\epsilon\geo$ from Definition \ref{defn:epsilongeo} give an injection of groups
 \[
  (\cG(N),\epsilon\geo)\colon \cG(\fS_d\geo)\to \Z\times\fS_d
 \]
 with image the subgroup $\tfS_d$.
\end{lem}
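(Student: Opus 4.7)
The plan is to show that the image of $(\cG(N),\epsilon\geo)$ lies in $\tfS_d$, is surjective onto $\tfS_d$, and then to prove injectivity by constructing an explicit inverse. For the first two points, note that $[\sigma]\in\cG(\fS_d\geo)$ maps to $(N(\sigma),\sigma)$, and since $N(\sigma)$ has the same parity as $\sigma$, the image is contained in $\tfS_d$; conversely, the pairs $(1,\tr)$ for transpositions $\tr$ are images of the generators $[\tr]$, and any $(r,\sigma)\in\tfS_d$ can be written as $(1,\tr_1)\cdots(1,\tr_{N(\sigma)})\cdot\bigl((1,\tr_0)(1,\tr_0)\bigr)^{m}$ with $2m=r-N(\sigma)$, so these pairs generate $\tfS_d$.

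For injectivity I would first observe that the adjoint action $\rho\colon\cG(\fS_d\geo)\to\Aut_{\PMQ}(\fS_d\geo)^{op}$ factors through $\epsilon\geo$: indeed, the quandle structure of $\fS_d\geo$ is just group conjugation inside $\fS_d$, and group conjugation preserves the word-length norm, hence preserves geodesicness of products, so each $\sigma\in\fS_d$ genuinely yields a PMQ automorphism of $\fS_d\geo$. Consequently $\ker(\epsilon\geo)\subseteq\cK(\fS_d\geo)$, which by Lemma~\ref{lem:cKcentercG} is central in $\cG(\fS_d\geo)$. Since $[\tr]^2\in\ker(\epsilon\geo)$ is central, and any two transpositions are conjugate, say $\tr'=\tr^{\tau}$, the quandle relation $[\tr']=[\tau]^{-1}[\tr][\tau]$ gives $[\tr']^2=[\tau]^{-1}[\tr]^2[\tau]=[\tr]^2$ by centrality; thus there is a well-defined central element $z:=[\tr]^2\in\cG(\fS_d\geo)$ independent of the transposition $\tr$.

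The key lemma I would then establish by induction on $p$ is: for every $\sigma\in\fS_d$ and every factorisation $\sigma=\tr_1\dots\tr_p$ into transpositions (possibly non-geodesic),
\[
[\tr_1]\dots[\tr_p]=[\sigma]\cdot z^{(p-N(\sigma))/2}
\]
in $\cG(\fS_d\geo)$, where the exponent is an integer because $p$ and $N(\sigma)$ both have the parity of $\sigma$. Writing $\sigma'=\tr_1\dots\tr_{p-1}$, one has either $N(\sigma'\tr_p)=N(\sigma')+1$, in which case the PMQ product relation yields $[\sigma'][\tr_p]=[\sigma]$ directly, or $N(\sigma'\tr_p)=N(\sigma')-1$, in which case $\sigma'=\sigma\tr_p$ is itself a geodesic product, so $[\sigma']=[\sigma][\tr_p]$ and therefore $[\sigma'][\tr_p]=[\sigma][\tr_p]^2=[\sigma]z$. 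This lemma is where the main technical obstacle lies, but once the centrality of $z$ is in hand the induction is essentially a bookkeeping exercise on exponents of $z$.

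Finally I would define $\psi\colon\tfS_d\to\cG(\fS_d\geo)$ by $\psi(r,\sigma):=[\sigma]\cdot z^{(r-N(\sigma))/2}$ and verify that it is a group homomorphism: applying the key lemma to the concatenation of geodesic factorisations of $\sigma$ and $\sigma'$ gives $[\sigma][\sigma']=[\sigma\sigma']\cdot z^{(N(\sigma)+N(\sigma')-N(\sigma\sigma'))/2}$, from which $\psi((r,\sigma)(r',\sigma'))=\psi(r,\sigma)\psi(r',\sigma')$ follows after an exponent computation. Direct inspection on the generators $[\tr]$ and $(1,\tr)$ shows that $\psi$ is the two-sided inverse of $(\cG(N),\epsilon\geo)$, completing the proof.
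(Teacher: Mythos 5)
Your proposal is correct, and for the injectivity half it takes a genuinely different route from the paper. Both arguments hinge on the same preliminary facts: that the elements $[\tr]$ generate $\cG(\fS_d\geo)$, and that $z=[\tr]^2$ is a central element independent of the transposition $\tr$. (The paper verifies centrality by a direct computation with the conjugation relations, $[\tr][\sigma][\tr]^{-1}=[\sigma^{\tr}]$ applied twice; your derivation via the factorisation of the adjoint action $\rho$ through $\epsilon\geo$ and Lemma~\ref{lem:cKcentercG} is a clean alternative and is equally valid, since conjugation in $\fS_d$ preserves the word-length norm.) From there the paper fits $\cG(\fS_d\geo)$ into a central extension by $\left<z\right>$, identifies the quotient $\cG(\fS_d\geo)/\left<z\right>$ with $\fS_d$ by checking the Coxeter-type presentation of $\fS_d$ against the images of the relations, and concludes by the five lemma. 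You instead prove the normal-form identity $[\tr_1]\cdots[\tr_p]=[\sigma]\,z^{(p-N(\sigma))/2}$ by induction on $p$ (the dichotomy $N(\sigma'\tr_p)=N(\sigma')\pm1$ is exactly Lemma~\ref{lem:geodesictransposition}, and both branches of your induction check out), and use it to write down an explicit inverse homomorphism $\psi(r,\sigma)=[\sigma]z^{(r-N(\sigma))/2}$. Your approach buys a concrete normal form for elements of $\cG(\fS_d\geo)$ and avoids invoking a presentation of $\fS_d$ and the five lemma; the paper's approach is shorter once the presentation of $\fS_d$ is taken as known and localises all the work in the identification of the quotient. The only points you should make explicit in a final write-up are the base case of the induction (which needs $[\one]=\one$, coming from the relation $[\one][\one]=[\one]$) and the fact that a geodesic factorisation $\sigma=\tr_1\cdots\tr_{N(\sigma)}$ yields $[\sigma]=[\tr_1]\cdots[\tr_{N(\sigma)}]$, which is used when you verify that $\psi$ is multiplicative; both are immediate from the defining relations of $\cG(\fS_d\geo)$.
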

\begin{proof}
The group $\cG(\fS_d\geo)$ is generated by all elements of the form $[\sigma]$, where $\sigma\in\fS_d\geo$.
We have $(\cG(N),\epsilon\geo)([\sigma])=(N(\sigma),\sigma)\in\tfS_d$, hence the image of $(N,\epsilon)$ is contained
in $\tfS_d$. Moreover, if $\tr\in\fS_d$ is any transposition, then $(\cG(N),\epsilon\geo)([\tr]^2)=(2,\one)$,
and the family of elements of the form $(N(\sigma),\sigma)$, together with $(2,\one)$, generate $\tfS_d$.

Next we show that $(\cG(N),\epsilon\geo)$ is injective. First, note that the equality $\one=\one\cdot\one$ in $\fS_d\geo$
gives an equality $[\one]=[\one]^2$ in $\cG(\fS_d\geo)$, so that $[\one]=\one$ in $\cG(\fS_d\geo)$. Note also that every
$\sigma\in\fS_d\geo$ with $\sigma\neq\one$ can be written as a product $\tr_1\cdot\dots\cdot\tr_{N(\sigma)}$
in $\fS_d\geo$, hence the generator $[\sigma]\in\cG(\fS_d\geo)$ is also equal to $[\tr_1]\cdot\dots[\tr_{N(\sigma)}]$.

This shows that the elements $[\tr]$, for $\tr$ varying in the transpositions of $\fS_d$, generate
$\cG(\fS_d\geo)$.

Second, we show that for all transpositions $\tr,\tr'\in\fS_d$, the equality
$[\tr]^2=[\tr']^2$ holds in $\cG(\fS_d\geo)$. Let $\sigma\in\fS_d$ with $\tr'=\sigma\tr\sigma^{-1}$.
Then $[\tr']^2=[\sigma][\tr]^2[\sigma]^{-1}$, using twice the relation
$[\tr']=[\sigma][\tr][\sigma]^{-1}$. On the other hand we have $[\tr][\sigma][\tr]^{-1}=[\sigma^{\tr}]$
and $[\tr][\sigma^{\tr}][\tr]^{-1}=[(\sigma^{\tr})^{\tr}]=[\sigma]$, hence $[\tr]^2$
and $[\sigma]$ commute. This shows that $[\tr]^2=[\tr']^2$.

We denote by $\mathfrak{z}$ the element $[\tr]^2\in\cG(\fS_d\geo)$, which is the same element
for any transposition $\tr\in\fS_d$ and is central in $\cG(\fS_d\geo)$. Note also that $\mathfrak{z}$ has infinite order,
since $(\cG(N),\epsilon\geo)(\mathfrak{z})=(2,\one)$. We have a commutative diagram of central extensions.
\[
 \begin{tikzcd}
  1\ar[r] & \left<\mathfrak{z}\right> \ar[r] \ar[d,"(N{,}\epsilon)"] &
  \cG(\fS_d\geo) \ar[r]\ar[d,"(\cG(N){,}\epsilon\geo)"] & \cG(\fS_d\geo)/ \left<\mathfrak{z}\right> \ar[r]\ar[d,"(\cG(N){,}\epsilon\geo)"] & 1\\
  1\ar[r] & \left<(2,\one)\right> \ar[r] & \tfS_d \ar[r] & \tfS_d / \left<(2,\one)\right>\ar[r] & 1.
 \end{tikzcd}
\]
The left vertical map is an isomorphism of infinite cyclic groups. The right vertical map is also an isomorphism:
first note that there is an isomorphism  $\tfS_d / \left<(2,\one)\right>\cong\fS_d$ induced by projection
on the second coordinate; second, recall that $\fS_d$ can be given a presentation
with the following generators and relations:

\begin{tabular}{ll}
 \textbf{Generators} & For all distinct $i,j\in [d]$ there is a generator $(i,j)=(j,i)$.\\
 \textbf{Relations} &$\bullet$ $(i,j)^2=\one$ for all distinct $i,j$.\\
  &$\bullet$ $(k,l)(i,j)(k,l)^{-1}=(i,j)$ for all distinct $i,j,k,l$.\\
  &$\bullet$ $(i,j)(j,k)(i,j)^{-1}=(j,k)(i,j)(j,k)^{-1}$ for all distinct $i,j,k$.\\
\end{tabular}

The relations of second and third type also hold between the elements $[(i,j)]$, that generate $\cG(\fS_d\geo)$; by quotienting $\cG(\fS_d\geo)$ by the subgroup $\left<\mathfrak{z}\right>$,
we impose also the relations of first type: hence also $\cG(\fS_d\geo)/ \left<\mathfrak{z}\right>$
is isomorphic to $\fS_d$, and the right vertical map is an isomorphism.
By the five lemma, the middle vertical map is an isomorphism as well.
\end{proof}
In particular the group $\cK(\fS_d\geo)$ (see Notation \ref{nota:cK}) can be identified
with the subgroup $2\Z\subset\tfS_d$.

\subsection{Some classical results about symmetric groups}

In this subsection we recollect some classical facts, most of which go back
to Clebsch \cite{Clebsch} and Hurwitz \cite{Hurwitz}, about sequences of transpositions and standard
moves. We do this for the sake of completeness, and we leave
some details to the reader.

\begin{lem}
\label{lem:formulaN}
Let $\sigma\in\fS_d$ be a permutation having a cycle decomposition with $k$ cycles $c_1,\dots,c_k$,
where fixpoints of $\sigma$ count as cycles of length 1; then $N(\sigma)=d-k$. 
\end{lem}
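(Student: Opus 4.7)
The plan is to establish the equality $N(\sigma)=d-k$ by proving the two inequalities separately, using the classical fact that right-multiplication by a transposition changes the number of cycles by exactly $\pm 1$.

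First I would prove the upper bound $N(\sigma)\le d-k$ by exhibiting an explicit decomposition into transpositions. For each cycle $c_j=(a_{j,1},a_{j,2},\dots,a_{j,\ell_j})$ of length $\ell_j\ge1$, one has the standard identity
\[
c_j=(a_{j,1},a_{j,2})(a_{j,2},a_{j,3})\cdots(a_{j,\ell_j-1},a_{j,\ell_j}),
\]
which expresses $c_j$ as a product of $\ell_j-1$ transpositions (an empty product for $\ell_j=1$). Since the cycles are disjoint they commute, so concatenating these expressions writes $\sigma$ as a product of $\sum_{j=1}^k(\ell_j-1)=d-k$ transpositions, proving $N(\sigma)\le d-k$.

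Next I would prove the lower bound $N(\sigma)\ge d-k$. The key lemma is: if $\pi\in\fS_d$ has $k(\pi)$ cycles and $\tr=(i,j)$ is any transposition, then $k(\pi\tr)=k(\pi)\pm1$. Indeed, if $i$ and $j$ lie in the same cycle of $\pi$, then this cycle splits into two cycles of $\pi\tr$, so $k(\pi\tr)=k(\pi)+1$; if $i$ and $j$ lie in different cycles, these two cycles merge, so $k(\pi\tr)=k(\pi)-1$. Both statements are verified by a direct computation of the orbits of $\pi\tr$. Starting from $\one\in\fS_d$, which has $k(\one)=d$ cycles, and writing $\sigma=\tr_1\cdots\tr_m$ with $m=N(\sigma)$, an induction on $m$ using the key lemma gives $k=k(\sigma)\ge d-m$, i.e.\ $N(\sigma)\ge d-k$.

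The main obstacle, such as it is, is the verification of the cycle-count lemma for multiplication by a transposition; this is a short case analysis but must be done carefully with the convention (used throughout the paper) that permutations compose from left to right. Combining the two inequalities yields $N(\sigma)=d-k$.
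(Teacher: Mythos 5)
Your proof is correct, and both inequalities are established soundly; the upper bound (factoring each cycle into one fewer transposition than its length and concatenating over disjoint cycles) is exactly the paper's argument. For the lower bound, however, you take a genuinely different route. The paper argues via the subgroup generated by the transpositions appearing in a minimal factorisation: the set $\set{\tr_1,\dots,\tr_r}$ determines a graph on $[d]$ with $r$ edges, whose connected components give a partition $(\fP_1,\dots,\fP_\ell)$ with $\langle\tr_1,\dots,\tr_r\rangle=\fS_{\fP_1}\times\dots\times\fS_{\fP_\ell}$ and $\ell\ge d-r$; since each cycle of $\sigma$ is contained in one piece, $k\ge\ell$ and the bound follows. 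You instead use the local statement that right-multiplication by a transposition changes the cycle count by exactly $\pm1$, and induct along the word $\tr_1\cdots\tr_m$ starting from the identity with its $d$ cycles. Your key lemma is essentially the cycle-theoretic content of the paper's subsequent Lemma 7.3 (\ref{lem:geodesictransposition}); note that in the paper that lemma is \emph{deduced from} the norm formula, so it is important (and you do this correctly) that you verify the splitting/merging of cycles directly by computing orbits rather than citing it, which avoids any circularity. The trade-off: your argument is the more standard textbook one and is self-contained; the paper's graph-theoretic argument additionally produces the Young-subgroup description of $\langle\tr_1,\dots,\tr_r\rangle$, which is reused later (e.g.\ in the analysis of $\widehat{\fS_d\geo}$ in Proposition \ref{prop:hatfSdgeo}).
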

\begin{proof}
 Any collection of transpositions $\set{\tr_1,\dots,\tr_r}\subset\fS_d$ generates a subgroup of $\fS_d$
 of the form $\fS_{\fP_1}\times\dots\fS_{\fP_\ell}\subset\fS_d$, for some partition $(\fP_1,\dots,\fP_\ell)$
 of $[d]$. The pieces of the partition $\fP_i$ are the connected components of the graph 
 having as vertices the elements of $[d]$, and having one edge between $j$ and $j'$ for any transposition $\tr_i=(j,j')$.
 The number $\ell$ of connected components of this graph is at least $d-r$, so we get the inequality
 $r\ge d-\ell$.
 
 Let now $\sigma=\tr_1\dots\tr_r$ be a factorisation of $\sigma$ exhibiting $r=N(\sigma)$.
 Then each cycle $c_i$, considered as a subset of $[d]$,
 is contained in some piece of the partition $(\fP_1,\dots,\fP_\ell)$ of $[d]$ associated with the set of transpositions $\set{\tr_1,\dots,\tr_r}$, since the action of $\sigma$ is transitive on the set $c_i$
 and since $\sigma\in\fS_{\fP_1}\times\dots\fS_{\fP_\ell}$. This implies that $k\ge\ell$, hence
 $d-\ell\ge d-k$. Putting together the two inequalities, we obtain $N(\sigma)\ge d-k$.
 
 On the other hand, one can factor each cycle $c_i$ into $|c_i|-1$ transpositions, and 
 get in this way
 a factorisation of $\sigma$ into precisely $\sum_{i=1}^k(|c_i|-1)=d-k$ transpositions.
 See for instance the monotone decomposition described below.
\end{proof}

\begin{lem}
 \label{lem:geodesictransposition}
 Let $\sigma\in\fS_d$ be a permutation and $\tr=(i,i')\in\fS_d$ be a transposition.
 Then $N(\tr\sigma)=N(\sigma)+1$
 if $i$ and $i'$ belong to different cycles of the cycle decomposition of $\sigma$, and $N(\tr\sigma)=N(\sigma)-1$
 otherwise.
\end{lem}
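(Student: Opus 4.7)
My approach will be to reduce the statement to the combinatorics of cycle structure via the formula of Lemma \ref{lem:formulaN}. Writing $k(\sigma)$ for the number of cycles of $\sigma$ (with fixpoints counted as length-one cycles), that lemma gives $N(\sigma) = d - k(\sigma)$, so the claim is equivalent to showing that left multiplication by the transposition $\tr = (i,i')$ changes the cycle count by exactly one, decreasing it by one precisely when $i$ and $i'$ belong to different cycles of $\sigma$.

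To verify this, I would analyse the cycle decomposition of $\tr\sigma$ directly in the two cases. In the ``different cycles'' case, write $c = (i\ \sigma(i)\ \dots\ \sigma^{a-1}(i))$ and $c' = (i'\ \sigma(i')\ \dots\ \sigma^{b-1}(i'))$; then $\tr\sigma$ agrees with $\sigma$ on the complement of $c \cup c'$, while on $c \cup c'$ it acts as the single cycle obtained by interleaving $c$ and $c'$ appropriately. Hence $k(\tr\sigma) = k(\sigma) - 1$. In the ``same cycle'' case, write the common cycle as $c = (i\ \sigma(i)\ \dots\ \sigma^{a-1}(i))$ with $i' = \sigma^{j}(i)$ for some $1 \le j \le a-1$; then $\tr\sigma$ agrees with $\sigma$ outside $c$ and splits $c$ into the two cycles $(i\ \sigma(i)\ \dots\ \sigma^{j-1}(i))$ and $(i'\ \sigma(i')\ \dots\ \sigma^{a-1}(i))$. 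Hence $k(\tr\sigma) = k(\sigma) + 1$.

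Combining these two computations with Lemma \ref{lem:formulaN} yields the two cases of the statement. There is no real obstacle: the only bookkeeping to be careful about is the convention for composition of permutations, but swapping the order would merely reverse the direction in which one traverses each cycle and does not affect whether cycles merge or split. Since the qualitative dichotomy ``same cycle versus different cycles'' is symmetric in $i$ and $i'$ and independent of this convention, the conclusion follows.
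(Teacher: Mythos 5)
Your proof is correct and follows essentially the same route as the paper: reduce to the cycle-count formula $N(\sigma)=d-k$ of Lemma \ref{lem:formulaN} and check that a transposition merges two cycles into one (different-cycle case) or splits one cycle into two (same-cycle case). The only cosmetic difference is that the paper carries out the computation for $\sigma\tr$ and remarks afterwards that $\tr\sigma$ behaves identically, whereas you work with $\tr\sigma$ directly; your explicit descriptions of the merged and split cycles are accurate.
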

\begin{proof}
 Let $c$ and $c'$ be the cycles containing $i$ and $i'$ respectively. If $c$ and $c'$ are distinct cycles,
 then the cycle decomposition of $\sigma\tr$ consists of all cycles of $\sigma$ different from $c$ and $c'$,
 plus a cycle $\tilde c$ which is a \emph{concatenation} of $c$ and $c'$:
 \[
  \tilde c=(i\,,\,\sigma(i)\,,\,\sigma^2(i),\dots,\sigma^{-1}(i)\,,\,j\,,\,\sigma(j),\dots,\sigma^{-1}(j)).
 \]
 The statement follows in this case from the formula for $N$ in terms of the number of cycles, given in Lemma
 \ref{lem:formulaN}.
 
 If instead $c=c'$, then the cycle decomposition of $\sigma\tr$ consists of all cycles of $\sigma$ different
 from $c$, plus two cycles $\hat c$ and $\hat c'$ giving a \emph{splitting} of $c$, and containing $i$ and $i'$
 respectively:
 \[
  \hat c=(i\,,\,\sigma(i),\dots,\sigma^{-1}(j));\quad\quad \hat c'=(j\,,\,\sigma(j),\dots,\sigma^{-1}(i).
 \]
 Again the statement follows from the formula for $N$ from Lemma
 \ref{lem:formulaN}.
\end{proof}

The same result holds in Lemma \ref{lem:geodesictransposition} if we consider the product $\sigma\tr$ instead of $\tr\sigma$. The following corollary is a direct generalisation of Lemma \ref{lem:geodesictransposition}.
\begin{cor}
 \label{cor:geodesicpair}
 Let $\sigma,\tau\in\fS_d$ and suppose that $N(\sigma\tau)=N(\sigma)+N(\tau)$. Let $c$ be a cycle in
 the cycle decomposition of $\sigma$, and consider $c$ as a subset of $[d]$. Then
 $c$ is contained in some cycle $c'$ of the cycle decomposition of $\sigma\tau$.
\end{cor}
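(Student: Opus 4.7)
The plan is to fix a geodesic decomposition $\tau=\tr_1\cdots\tr_m$ with $m=N(\tau)$, set $\sigma_0=\sigma$ and $\sigma_i=\sigma\tr_1\cdots\tr_i$ for $1\le i\le m$, and track how the partition of $[d]$ into cycles evolves as $i$ increases from $0$ to $m$.

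First I would show that $N(\sigma_i)=N(\sigma)+i$ for all $0\le i\le m$. By Lemma \ref{lem:geodesictransposition} (in its right-multiplication form, as noted in the paragraph immediately following the lemma), each step satisfies $N(\sigma_i)=N(\sigma_{i-1})\pm 1$. Since $N(\sigma_0)=N(\sigma)$ and $N(\sigma_m)=N(\sigma\tau)=N(\sigma)+m$ by hypothesis, and since the total change over $m$ steps of size $\pm 1$ must be exactly $+m$, every individual step must be $+1$.

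Then I would extract the partition-theoretic consequence. By Lemma \ref{lem:geodesictransposition}, the fact that $N(\sigma_i)=N(\sigma_{i-1})+1$ means that $\tr_i=(j_i,k_i)$ has $j_i$ and $k_i$ lying in distinct cycles of $\sigma_{i-1}$. Inspecting the first case of the proof of Lemma \ref{lem:geodesictransposition}, the cycle decomposition of $\sigma_i$ is obtained from that of $\sigma_{i-1}$ by leaving all cycles not containing $j_i$ or $k_i$ unchanged and replacing the two cycles through $j_i$ and $k_i$ by their concatenation, which as a subset of $[d]$ is simply their union. Hence the partition of $[d]$ by cycles of $\sigma_i$ is coarser than (or equal to) the partition by cycles of $\sigma_{i-1}$.

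Iterating for $i=1,\dots,m$, the partition by cycles of $\sigma\tau=\sigma_m$ is coarser than the partition by cycles of $\sigma=\sigma_0$; equivalently, every cycle $c$ of $\sigma$ is contained in some cycle $c'$ of $\sigma\tau$, which is the desired conclusion. The only mildly delicate point is the translation between the norm going up and cycles merging (as opposed to some other rearrangement); but this is transparent from the explicit cycle description recalled from Lemma \ref{lem:geodesictransposition}, so there is no real obstacle.
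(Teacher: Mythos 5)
Your proof is correct and follows essentially the same route as the paper: both fix a minimal decomposition of $\tau$ into transpositions, deduce that the norm increases by exactly $1$ at each step (the paper via the triangle inequality, you via the parity of the $\pm1$ steps — an equivalent micro-argument), and then apply Lemma \ref{lem:geodesictransposition} to see that cycles only merge along the way.
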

\begin{proof}
 Let $\tau=\tr_1\dots\tr_r$ be a minimal decomposition of $\tau$ in transpositions, with $r=N(\tau)$.
 Then the hypothesis $N(\sigma\tau)=N(\sigma)+N(\tau)$ and the triangular inequality imply that, for all
 $0\leq j\leq r$ we also have $N(\sigma\tr_1\dots\tr_j)=N(\sigma)+j$. In particular for $1\leq j\leq r$
 we have the equality $N(\sigma\tr_1\dots\tr_{j-1})+1=N(\sigma\tr_1\dots\tr_j)$, which by Lemma
 \ref{lem:geodesictransposition} implies that each cycle of $\sigma\tr_1\dots\tr_{j-1}$ is contained
 in some cycle of $\sigma\tr_1\dots\tr_j$. In particular each cycle of $\sigma$ is contained in some
 cycle of $\sigma\tau$.
\end{proof}

\begin{defn}
 \label{defn:ht}
The \emph{height} of a permutation $\sigma\in\fS_d$, 
denoted by $\vht(\sigma)$, is the greatest index $i\in[d]$ such that $\sigma(i)\neq i$.

\end{defn}
Each permutation $\sigma\in\fS_d$ admits a unique decomposition $\sigma=\tr_1\dots,\tr_{N(\sigma)}$ into transpositions
with $\vht(\tr_1)<\dots<\vht(\tr_{N(\sigma)})$. We call this the \emph{monotone decomposition} of $\sigma$ into transpositions.

The monotone decomposition can be computed recursively
by setting $\tr_{N(\sigma)}:=(\vht(\sigma),\sigma^{-1}(\vht(\sigma)))$,
and noting that $\sigma':=\sigma\tr_{N(\sigma)}$ is a permutation of norm $N(\sigma')=N(\sigma)-1$
and height $\vht(\sigma')<\vht(\sigma)$. In fact the transposition $(\vht(\sigma),\sigma^{-1}(\vht(\sigma)))$
is the unique possible choice to ensure that $\vht(\sigma')<\vht(\sigma)$.
\begin{nota}
 \label{nota:monotonedec}
 We denote by $(\tr_1^\sigma,\dots,\tr_{N(\sigma)}^\sigma)$ the monotone decomposition of
 a permutation $\sigma\in\fS_d$. It is the empty sequence for $\sigma=\one$.
\end{nota}

\begin{lem}
\label{lem:weakClebsch}
 Let $(\tr_1,\dots,\tr_r)$ be a decomposition of $\sigma\in\fS_d$ into transpositions, witnessing $r=N(\sigma)$.
 Then there is a sequence of standard moves transforming $(\tr_1,\dots,\tr_r)$ into
 $(\tr_1^\sigma,\dots,\tr_r^\sigma)$.
\end{lem}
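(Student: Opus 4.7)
The plan is to proceed by induction on $r=N(\sigma)$, the base case $r=0$ being trivial. For the inductive step, setting $h=\vht(\sigma)$, the goal is to rearrange $(\tr_1,\dots,\tr_r)$, via standard moves, into a sequence whose last entry is $(h,\sigma^{-1}(h))=\tr_r^\sigma$ and whose first $r-1$ entries all fix $h$. The inductive hypothesis, applied to the resulting geodesic decomposition of $\tau:=\sigma\tr_r^\sigma$ (which satisfies $\vht(\tau)<h$ and, by Lemma \ref{lem:geodesictransposition}, $N(\tau)=r-1$), then finishes the proof: the recursive construction of the monotone decomposition gives $(\tr_1^\sigma,\dots,\tr_r^\sigma)=(\tr_1^\tau,\dots,\tr_{r-1}^\tau,\tr_r^\sigma)$.

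The core combinatorial step is to reduce to a single transposition involving $h$ and then push it to the right. Call a transposition in the sequence \emph{$h$-active} if it moves $h$; at least one exists since $\sigma$ moves $h$. I will analyse two elementary effects of standard moves on an adjacent pair: first, pushing an $h$-active $(h,c)$ past a non-$h$-active $\tr'$ yields $(\tr',(h,\tr'(c)))$, which preserves $h$-activity of the second entry; second, applying the standard move to two adjacent $h$-active transpositions $(h,c),(h,d)$ with $c\neq d$ yields $((h,d),(c,d))$, whose second entry is no longer $h$-active. Crucially, geodesicity rules out $c=d$ here, since two equal adjacent transpositions would let us shorten the decomposition, contradicting $r=N(\sigma)$.

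With these two effects, the argument proceeds as follows. Bring any two $h$-active transpositions adjacent by pushing via the first effect, then combine them via the second, strictly decreasing the count of $h$-active entries; iteration leaves a single $h$-active transposition, which is then slid to position $r$ by further pushes. In the resulting configuration $(\tr'_1,\dots,\tr'_{r-1},(h,c))$ with each $\tr'_j$ fixing $h$, the identity $\sigma=(\tr'_1\cdots\tr'_{r-1})\cdot(h,c)$, combined with the fact that $\tr'_1\cdots\tr'_{r-1}$ fixes $h$, forces $\sigma(c)=\tr'_1\cdots\tr'_{r-1}(h)=h$, so $c=\sigma^{-1}(h)$ and the last transposition is automatically $\tr_r^\sigma$, as desired.

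The main obstacle is the bookkeeping in the combining step: one must ensure that sliding $h$-active transpositions around never produces adjacent equal transpositions at intermediate stages, and that the process terminates cleanly. Both concerns are handled by the observation that the sequence remains a geodesic decomposition of $\sigma$ throughout, since standard moves preserve both the product in $\fS_d$ and the length of the sequence; hence adjacent equal transpositions cannot arise (they would contradict minimality of $r$), and the number of $h$-active entries is a strictly decreasing non-negative invariant.
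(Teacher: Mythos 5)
Your proof is correct and follows essentially the same strategy as the paper's: isolate the transpositions moving $h=\vht(\sigma)$, merge them pairwise via standard moves (using geodesicity to rule out equal adjacent entries) until a single one remains, slide it to the last position where it is forced to be $(h,\sigma^{-1}(h))=\tr_r^\sigma$, and induct on the remaining geodesic decomposition of $\sigma\tr_r^\sigma$. The only cosmetic difference is the order of operations (the paper first gathers all maximal-height transpositions into a suffix and then merges them, while you merge pairs as you go), which does not affect the argument.
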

\begin{proof}
 It suffices to find a sequence of standard moves transforming $(\tr_1,\dots,\tr_r)$ into a decomposition
 $(\tr'_1,\dots,\tr'_r)$ of $\sigma$ with $\vht(\tr'_r)=\vht(\sigma)$ and $\vht(\tr'_i)<\vht(\sigma)$ for all $1\le i\le r-1$:
 then the permutation $\sigma'=\sigma\tr'_r=\tr'_1\dots\tr'_{r-1}$ satisfies $\vht(\sigma')<\vht(\sigma)$, whence
 $\tr'_r=\tr^\sigma_r$; we can then proceed by induction on $\sigma'$, which has both smaller height and smaller norm
 than $\sigma$.

 Note that by Lemma \ref{lem:geodesictransposition} each transposition $\tr_i$ satisfies $\vht(\tr_i)\le \vht(\sigma)$.
 Suppose that there is an index $i$ with $\vht(\tr_i)=\vht(\sigma)>\vht(\tr_{i+1})$; then we can replace $(\tr_i,\tr_{i+1})$
 by $(\tr_{i+1},\tr_i^{\tr_{i+1}})$, thus moving the transposition with maximal height to right.
 
 By repeating
 this procedure, we can assume that there is an index $1\le i\le r$ such that $\tr_1,\dots,\tr_{i-1}$ have height
 strictly less than $\vht(\sigma)$, whereas $\tr_i,\dots,\tr_r$ have height equal to $\vht(\sigma)$. Suppose $i<r$ and note that
 $\tr_i\neq\tr_{i+1}$, otherwise their product would be $\one\in\fS_d$ and thus the norm of $\sigma$ would be at most $r-2$.
 By a standard  move we can replace $(\tr_i,\tr_{i+1})$ by $(\tr_{i+1}^{\tr_i},\tr_i)$ (here we rewrite
 as $\tr_i$ the exponent, which should be $\tr_i^{-1}$). The crucial observation
 is that $\tr_{i+1}^{\tr_i}$ has height strictly less than $\vht(\tr_i)=\vht(\tr_{i+1})=\vht(\sigma)$.
 
 By repeating the last procedure, we can assume that the only transposition $\tr_i$ satisfying $\vht(\tr_i)=\vht(\sigma)$
 is $\tr_r$, as desired.
\end{proof}

\begin{lem}
\label{lem:anyjJattheend}
 Let $(\tr_1,\dots,\tr_r)$ be a sequence of transpositions in $\fS_d$ that generate $\fS_d$ as a group. Let
 $(j,j')$ be a transposition in $\fS_d$. Then there is a sequence of standard moves transforming
 $(\tr_1,\dots,\tr_r)$ into a sequence $(\tr'_1,\dots,\tr'_r)$ with $\tr'_r=(j,j')$.
\end{lem}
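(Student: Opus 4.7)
I propose a graph-theoretic approach. Each transposition $(u,v)\in\fS_d$ corresponds to the edge $\{u,v\}$ on the vertex set $[d]$, and a sequence of transpositions becomes a multigraph on $[d]$. The hypothesis that $\tr_1,\dots,\tr_r$ generate $\fS_d$ means this multigraph is connected.

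The argument combines three ingredients. First, a \emph{sliding principle}: any single entry of a sequence can be transported to any other position without alteration by iterating standard moves. Indeed, rule 2 of Definition \ref{defn:standardmove}, applied at position $i$, sends $(a_i,a_{i+1})$ to $(a_{i+1}^{a_i^{-1}},a_i)$, preserving $a_i$ while conjugating $a_{i+1}$; rule 1 dually slides entries leftward. Crucially, if the sliding transposition is disjoint from the intermediate transposition it passes, then the intermediate entry (being conjugated by a disjoint, hence commuting, transposition) is also unchanged. Second, a \emph{triangle move}: if two adjacent entries are transpositions $(u,v)$ and $(v,w)$ sharing a vertex, a single standard move replaces one of them by the third edge $(u,w)$ of the triangle (rule 1 producing $((v,w),(u,w))$ and rule 2 producing $((u,w),(u,v))$).

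The main claim is that the set $T$ of transpositions appearing as an entry in some sequence Hurwitz-equivalent to $(\tr_1,\dots,\tr_r)$ contains every transposition of $\fS_d$. Given $j\neq j'\in[d]$, fix a shortest path $j=v_0,v_1,\dots,v_\ell=j'$ in the multigraph of some orbit-sequence; such a path exists by connectedness. I induct on $\ell$. The case $\ell=1$ is trivial. For $\ell\ge 2$, the vertices $v_0,\dots,v_\ell$ are pairwise distinct, so $(v_0,v_1)$ is disjoint from every later path edge $(v_i,v_{i+1})$ with $i\ge 2$. Using the sliding principle, I bring $(v_0,v_1)$ adjacent to $(v_1,v_2)$ within the current orbit-sequence: by disjointness, none of the later path edges are altered by the sliding. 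A triangle move then introduces the chord $(v_0,v_2)$, producing a new orbit-sequence whose multigraph contains a strictly shorter path $v_0,v_2,v_3,\dots,v_\ell$; the inductive hypothesis applies. Finally, once $(j,j')$ has been realised as an entry at some position $i$ of an orbit-sequence, iterated applications of rule 2 at positions $i,i+1,\dots,r-1$ slide it unchanged to position $r$, yielding the desired $(\tr'_1,\dots,\tr'_{r-1},(j,j'))$.

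The main technical obstacle lies in verifying that the sliding step in the induction truly preserves the remainder of the path. This is where the disjointness observation of the sliding principle is essential: any path edge $(v_i,v_{i+1})$ with $i\ge 2$ shares no vertex with the sliding transposition $(v_0,v_1)$ (since the path is shortest and its vertices are distinct), and is therefore fixed under any conjugation arising during the slide. Without disjointness this bookkeeping would become delicate, since we would need to simultaneously track how non-path entries get modified; with it, the induction reduces to a clean combinatorial statement about triangle moves on connected graphs.
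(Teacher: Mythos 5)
Your proof is correct, but it takes a genuinely different route from the paper's. The paper first reduces to the case $j'=d$ by conjugation invariance and then inducts on $d$: it runs the height-sorting procedure of Lemma \ref{lem:weakClebsch} to push all height-$d$ entries to the end of the sequence, where they all equal some $(d,J)$; if $J\neq j$ it applies the inductive hypothesis to the initial segment (which generates $\fS_{d-1}$) to place $(j,J)$ just before that block, and finishes with the triangle move $((j,J),(J,d))\mapsto((J,d),(d,j))$ followed by a slide to the last position. You instead induct on the graph distance from $j$ to $j'$ in the multigraph whose edges are the entries of the sequence (connected precisely because the entries generate $\fS_d$), shortening a geodesic by one at each step via a slide followed by a triangle move. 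The key point --- that a shortest path has pairwise distinct vertices, so the sliding edge $(v_0,v_1)$ is disjoint from every later path edge and leaves it intact even when passing over it --- is exactly the bookkeeping needed, and your two forms of the triangle move are computed correctly against Definition \ref{defn:standardmove}. (One small point of hygiene: after the triangle move the new path $v_0,v_2,\dots,v_\ell$ need not be a geodesic in the new multigraph, but it is shorter and still has distinct vertices, which is all the induction requires; it is cleanest to state the inductive hypothesis in those terms.) Your argument is more self-contained and makes the connectivity mechanism transparent, whereas the paper's version reuses the height machinery already built for Lemma \ref{lem:weakClebsch} and feeds directly into the parallel inductions on $d$ in Lemma \ref{lem:onlyhtsigmaJatend} and Proposition \ref{prop:Clebsch}.
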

\begin{proof}
 The statement is obvious for $d=2$, so we assume $d\ge3$. Note that it suffices to prove the statement
 in the case $j'=d$, as the statement is invariant under conjugation of permutations in $\fS_d$. So we wish
 to achieve a sequence of transpositions ending with $(d,j)$, for a fixed $1\le j\le d-1$.
 
By using the procedures described in the proof of Lemma \ref{lem:weakClebsch}, we can operate a sequence
of standard moves and reach a sequence $(\hat\tr_1,\dots,\hat\tr_r)$ for which there exists an index
$1\le i\le r-1$ satisfying the following:
\begin{itemize}
 \item the transpositions $\hat\tr_1,\dots,\hat\tr_i$ have height $<d$;
 \item $\hat\tr_{i+1}=\dots=\hat\tr_r=(d,J)$ for some $1\le J\le d-1$.
\end{itemize}
If $J=j$ we are done; otherwise we use the inductive hypothesis on the sequence
$\hat\tr_1,\dots,\hat\tr_i$, which generates $\fS_{d-1}$: possibly after a suitable sequence
of standard moves on these $i$ transpositions, we can assume that $\hat\tr_i=(j,J)$. We can now
replace $(\hat\tr_i,\hat\tr_{i+1})=((j,J),(J,d))$ with $((J,d),(d,j))$; we can then move $(d,j)$
to the end of the sequence, replacing thus each further occurrence of $(J,d)$ with $(j,J)$.
\end{proof}

\begin{lem}
\label{lem:onlyhtsigmaJatend}
Let $\sigma\in\fS_d$ be a permutation of height $d$, let $(\tr_1,\dots,\tr_r)$ be a sequence of transpositions
generating $\fS_d$, such that $\sigma=\tr_1\dots\tr_r$. Then there is a sequence of standard
moves transforming $(\tr_1,\dots,\tr_r)$ into a sequence $(\tr'_1,\dots,\tr'_r)$ with $\tr'_1,\dots,\tr'_{r-1}$
of height $<d$, and $\tr'_r=(d,\sigma^{-1}(d))$.
\end{lem}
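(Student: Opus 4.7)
The plan is to proceed by induction on $d$, combining the sorting procedure from the proof of Lemma \ref{lem:weakClebsch} with the inductive use of Lemma \ref{lem:anyjJattheend}.

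First, I would apply the procedure from the proof of Lemma \ref{lem:weakClebsch} to move all transpositions of height $d$ to the right end of the sequence: whenever $\tr_i$ has height $d$ and $\tr_{i+1}$ has height $<d$, replace the pair by $(\tr_{i+1},\tr_i^{\tr_{i+1}})$, and note that $\tr_i^{\tr_{i+1}}$ still has height $d$ since $\tr_{i+1}$ fixes $d$. This produces a sequence $(\hat\tr_1,\dots,\hat\tr_i;\hat\tr_{i+1},\dots,\hat\tr_r)$ with height-$<d$ prefix and a non-empty height-$d$ suffix (non-empty since $\vht(\sigma)=d$). Within the suffix, whenever two consecutive transpositions $(d,a)$ and $(d,b)$ are distinct, I apply the Case A-type move $((d,a),(d,b))\mapsto((d,b),(a,b))$, which converts two height-$d$ transpositions into one height-$d$ and one height-$<d$ transposition $(a,b)\in\fS_{d-1}$; the latter can then be moved back into the prefix. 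Iterating this collapses the suffix to a block $(d,J)^c$ of identical transpositions for some $J\in[d-1]$ and some $c\ge1$, with $c$ necessarily odd (otherwise $\sigma$ would fix $d$). A direct computation on the total product $\sigma=\pi\cdot(d,J)$, where $\pi$ is the prefix product and fixes $d$, identifies $J=\sigma^{-1}(d)$.

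If $c=1$ the lemma follows immediately. Otherwise, for $c\ge 3$, I would use the inductive hypothesis together with Lemma \ref{lem:anyjJattheend} applied to $\fS_{d-1}$: since the suffix consists only of $(d,J)$, the prefix must generate $\fS_{d-1}$, so by Lemma \ref{lem:anyjJattheend} in $\fS_{d-1}$ we can rearrange the prefix to end with any chosen transposition $(J,K)$ with $K\in[d-1]\setminus\{J\}$. A standard move at positions $i,i+1$ then yields $((J,K),(d,J))\mapsto((d,J),(d,K))$, producing a distinct height-$d$ pair at the start of the suffix, and a subsequent cascade of Case A moves propagates the newly-introduced height-$d$ transposition leftward (or rightward) through the block $(d,J)^{c-1}$, each step converting one $(d,J)$ into a height-$<d$ transposition. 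One then iterates until only a single height-$d$ transposition remains, which must be $(d,\sigma^{-1}(d))$ by the product constraint, and transports it to position $r$ by further sortings.

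The main obstacle is the orchestration of the reduction in the case $c\ge 3$: the set-up move increases the height-$d$ count by one and a single Case A move decreases it by one, so a naive alternation gives no net reduction. The correct argument must exploit the fact that once a distinct $(d,K)$ has been inserted into the block $(d,J)^{c-1}$, a \emph{cascade} of consecutive Case A moves can be performed — each propagating the introduced height-$d$ transposition through another $(d,J)$ and converting the latter into a height-$<d$ transposition — producing a net decrease of two in the height-$d$ count per set-up. Keeping track of the identity and position of the surviving height-$d$ transposition through this cascade, and ensuring that it terminates as $(d,\sigma^{-1}(d))$ at position $r$ (which may require additional applications of the inductive hypothesis to reshape the height-$<d$ prefix so that label-changing conjugations yield the desired final transposition), is the core bookkeeping challenge of the proof.
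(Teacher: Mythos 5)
Your first phase (sorting by height, collapsing the height-$d$ suffix to a constant block $(d,J)^c$ with $c$ odd, and reading off $J=\sigma^{-1}(d)$ from the product, since the prefix fixes $d$) matches the paper's proof, as does the idea of invoking Lemma \ref{lem:anyjJattheend} inductively on the prefix, which indeed generates $\fS_{d-1}$. The gap is exactly where you place it: the reduction step for $c\ge3$ is asserted rather than proved. Your set-up move $\pa{(J,K),(d,J)}\mapsto\pa{(d,J),(d,K)}$ raises the height-$d$ count to $c+1$, and the ``cascade'' you then appeal to is not innocent: collapsing the pair at the front of the block simply undoes the set-up move and returns you to $(d,J)^c$, and when a height-$<d$ transposition such as $(J,K)$ is extracted in the interior of the block, transporting it past an adjacent $(d,J)$ in the wrong direction reconjugates it back to a height-$d$ transposition. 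So the claimed net decrease of two per round is precisely the statement requiring proof, and your proposal leaves it as a ``bookkeeping challenge'' instead of carrying it out. (One of your worries is unfounded, though: once the block has length one, its entry is forced to be $(d,\sigma^{-1}(d))$ by the product constraint, so no further reshaping of the prefix is needed for that.)

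The paper sidesteps this difficulty by inserting the defect in a way that keeps the count fixed: it uses Lemma \ref{lem:anyjJattheend} in $\fS_{d-1}$ to make the prefix end with $(j,\sigma^{-1}(d))$ for some $j\neq\sigma^{-1}(d)$, and then replaces the triple $\pa{(j,\sigma^{-1}(d)),(d,\sigma^{-1}(d)),(d,\sigma^{-1}(d))}$ by $\pa{(j,\sigma^{-1}(d)),(d,j),(d,j)}$ via a short explicit sequence of standard moves inside the copy of $\fS_3$ generated by these transpositions. The height-$d$ block becomes $(d,j),(d,j),(d,\sigma^{-1}(d)),\dots$, still of length $c$ but no longer constant, and re-running the collapse of Lemma \ref{lem:weakClebsch} then strictly decreases the length; since the block must again terminate constant and of odd length, it drops by at least two. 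To repair your argument you should either adopt this count-preserving insertion, or explicitly exhibit the order of moves in your cascade --- collapsing only from the second adjacent distinct pair onward and always transporting an extracted height-$<d$ transposition $\tau$ leftward out of the block via $\pa{(d,y),\tau}\mapsto\pa{\tau,(d,y)^{\tau}}$, which preserves height $d$ because $\tau$ fixes $d$ --- and verify that it terminates with count $c-2$.
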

\begin{proof}
The statement is obvious for $d=2$, so assume $d\ge3$.
Use the procedure of Lemmas \ref{lem:weakClebsch} and \ref{lem:anyjJattheend}, and note that
the new sequence $(\hat\tr_1,\dots,\hat\tr_r)$ satisfies
$\hat\tr_{i+1}=\dots=\hat\tr_r=(d,\sigma^{-1}(d))$ for some $i<r$ with $r-i$ odd. If $r-i=1$ we are done,
so assume $r-i\ge3$.

Fix any $j<d$ with $j\neq\sigma^{-1}(d)$. By Lemma \ref{lem:anyjJattheend}
we can assume $\hat\tr_i=(j,\sigma^{-1}(d))$, possibly after applying a sequence of standard
moves on $(\hat\tr_1,\dots,\hat\tr_i)$: in fact the transpositions
$\hat\tr_1,\dots,\hat\tr_i$ generate $\fS_{d-1}$.
By a suitable sequence of 4 standard moves we can then replace
\[
(\hat\tr_i,\hat\tr_{i+1},\hat\tr_{i+2})=((j,\sigma^{-1}(d)),(d,\sigma^{-1}(d)),(d,\sigma^{-1}(d)))
\]
with $((j,\sigma^{-1}(d)),(d,j),(d,j))$. We obtain a sequence $(\tilde\tr_1,\dots,\tilde\tr_r)$
such that $\tilde\tr_1,\dots,\tilde\tr_i$ have height $<d$, whereas $\tilde\tr_{i+1},\dots,\tilde\tr_r$
have height $d$ and are not all equal.

We can now repeat the second part of the procedure of Lemma \ref{lem:weakClebsch} to the sequence
$(\tilde\tr_1,\dots,\tilde\tr_r)$, obtaining a sequence $(\tr'_1,\dots,\tr'_r)$ such that
$\tr'_1,\dots,\tr'_{i'}$ have height $<d$, and $\tr'_{i'+1},\dots,\tr'_r$ are equal and have height
$d$, for some $i'\ge i+2$. Possibly after iterating the entire procedure, we can assume $i'=r-1$.
\end{proof}

\begin{prop}
\label{prop:Clebsch}
 Let $(\tr_1,\dots,\tr_r)$ and $(\tr'_1,\dots,\tr'_r)$ be two sequences of transpositions of the same length $r$, suppose $\left<\tr_1,\dots,\tr_r\right>=\left<\tr'_1,\dots,\tr'_r\right>=\fS_d$, and also
 assume that there is $\sigma\in\fS_d$ with $\sigma=\tr_1\dots\tr_r=\tr'_1\dots\tr'_r$.
 Then there is a sequence of standard moves transforming $(\tr_1,\dots,\tr_r)$ into
 $(\tr'_1,\dots,\tr'_r)$.
\end{prop}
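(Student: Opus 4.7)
I would prove Proposition \ref{prop:Clebsch} by induction on $d \ge 2$. The base case $d = 2$ is trivial: the only transposition in $\fS_2$ is $(1,2)$, so both sequences must consist of $r$ copies of $(1,2)$ and are equal. For the inductive step, assume $d \ge 3$ and the proposition holds in $\fS_{d-1}$; I split into two cases based on $\vht(\sigma)$.

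In the case $\vht(\sigma) = d$, I apply Lemma \ref{lem:onlyhtsigmaJatend} to each sequence to bring them, via standard moves, into sequences ending with $(d, \sigma^{-1}(d))$ and whose first $r-1$ entries all have height $< d$. These first $r-1$ entries lie in $\fS_{d-1}$, multiply to the common permutation $\sigma \cdot (d, \sigma^{-1}(d))$, and generate $\fS_{d-1}$: indeed, the graph on $\{1,\dots,d\}$ associated with the full sequence is connected (as it generates $\fS_d$), the vertex $d$ has degree $1$ (being incident only to the edge $(d,\sigma^{-1}(d))$), and removing a degree-$1$ vertex from a connected graph yields a connected graph on $\{1,\dots,d-1\}$, corresponding exactly to the first $r-1$ transpositions. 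Here we use $d \ge 3$ to exclude the degenerate case where the whole graph is just the edge $\{d, \sigma^{-1}(d)\}$. By the inductive hypothesis applied in $\fS_{d-1}$, the two first-$(r-1)$ parts are connected by standard moves; preserving the common last transposition yields the desired standard moves between the full sequences.

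In the case $\vht(\sigma) < d$, I fix any $a^* \in \{1,\dots,d-1\}$ and apply Lemma \ref{lem:anyjJattheend} to bring both sequences into a form ending with $(a^*, d)$. The first $r-1$ transpositions then multiply to $\sigma' := \sigma \cdot (a^*, d)$, which interchanges $d$ and $a^*$, so $\vht(\sigma') = d$. Moreover they generate $\fS_d$: they generate some subgroup of the form $\fS_S$ with $S \subseteq \{1,\dots,d\}$, and since $\sigma' \in \fS_S$ has both $a^*$ and $d$ in its support, we have $\{a^*, d\} \subseteq S$, hence $(a^*, d) \in \fS_S$; so the full sequence also generates $\fS_S$, forcing $S = \{1,\dots,d\}$. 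Since the first $r-1$ transpositions generate $\fS_d$ with product of height $d$, I apply Lemma \ref{lem:onlyhtsigmaJatend} to them, bringing both sequences to the form $(\bar\tr_1,\dots,\bar\tr_{r-2},(a^*,d),(a^*,d))$ where the $\bar\tr_i$ have height $<d$. The first $r-2$ transpositions lie in $\fS_{d-1}$, multiply to $\sigma$, and by the same connectivity argument (again using $d \ge 3$ to rule out the splitting of $\{a^*, d\}$ as a separate component) generate $\fS_{d-1}$; applying the inductive hypothesis to them concludes the proof.

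The main obstacle is the verification in the second case that after applying Lemma \ref{lem:anyjJattheend} the first $r-1$ transpositions still generate all of $\fS_d$: this relies crucially on the observation that the product $\sigma' = \sigma \cdot (a^*, d)$ moves both $a^*$ and $d$, which forces the generated subgroup of the form $\fS_S$ to contain both $a^*$ and $d$ in $S$, and hence to contain the transposition $(a^*, d)$ itself. The accompanying connectivity arguments, which identify the subgroup generated by a set of transpositions with $\fS_S$ for $S$ the vertex set of the associated graph, and which use $d \ge 3$ to prevent the pathological degeneration into a disconnected graph, form the other key technical ingredient.
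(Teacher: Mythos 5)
Your overall strategy is the paper's own: induction on $d$ (and on the length of the sequences), a case split on whether $\vht(\sigma)=d$, and the use of Lemmas \ref{lem:anyjJattheend} and \ref{lem:onlyhtsigmaJatend} to normalise the last transposition before descending to $\fS_{d-1}$. Your treatment of the case $\vht(\sigma)=d$ is correct, and the graph-theoretic argument that the first $r-1$ transpositions generate $\fS_{d-1}$ (vertex $d$ is a leaf of a connected spanning graph) is a detail the paper leaves implicit.

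There is, however, a genuine gap in the case $\vht(\sigma)<d$, at the step where you claim that $(\tr_1,\dots,\tr_{r-1})$ generates $\fS_d$. You assert that these transpositions ``generate some subgroup of the form $\fS_S$'', but a set of transpositions generates $\fS_{\fP_1}\times\dots\times\fS_{\fP_\ell}$, where the $\fP_j$ are the connected components of the associated graph on $[d]$; it is of the form $\fS_S$ only if that graph is connected, which is essentially what you are trying to prove. Your subsequent deduction --- both $a^*$ and $d$ lie in the support of $\sigma'$, hence in $S$, hence $(a^*,d)\in\fS_S$ --- fails exactly in the problematic scenario where the graph of the first $r-1$ transpositions has two components separating $a^*$ from $d$ (which is the only disconnected configuration compatible with the full sequence generating $\fS_d$): an element of $\fS_{\fP_1}\times\fS_{\fP_2}$ can perfectly well have both $a^*$ and $d$ in its support while the transposition $(a^*,d)$ does not belong to the subgroup. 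What closes the gap is a strictly stronger fact than membership in the support: since $\sigma$ fixes $d$, the permutation $\sigma'=\sigma\cdot(a^*,d)$ sends one of $a^*,d$ to the other, so $a^*$ and $d$ lie in the same \emph{cycle} of $\sigma'$; every cycle of an element of $\fS_{\fP_1}\times\dots\times\fS_{\fP_\ell}$ is contained in a single piece $\fP_j$, which forces $a^*$ and $d$ into the same component and yields the desired contradiction. This is precisely how the paper argues (with $a^*=d-1$). Incidentally, $\sigma'$ does not literally ``interchange'' $a^*$ and $d$ --- it sends one to the other and the other to $\sigma(a^*)$ --- but once the generation claim is repaired along the lines above, the rest of your reduction to $\fS_{d-1}$ with $r-2$ transpositions goes through.
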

\begin{proof}
The statement is obvious for $d=2$, so we assume $d\ge3$. If $\vht(\sigma)=d$, by Lemma \ref{lem:onlyhtsigmaJatend}
we can assume, up to changing both sequences of transpositions by suitable sequences of standard moves,
that $\tr_r=\tr'_r=(d,\sigma^{-1}(d))$ and all other transpositions $\tr_i$ and $\tr'_i$ have height
$<d$. We then have $\tr_1\dots\tr_{r-1}=\tr'_1\dots\tr'_{r-1}$ and
$\left<\tr_1,\dots,\tr_{r-1}\right>=\left<\tr'_1,\dots,\tr'_{r-1}\right>=\fS_{d-1}$, so we can conclude by inductive hypothesis
on $r$ and $d$.

If $\vht(\sigma)<d$, by Lemma \ref{lem:anyjJattheend} we may assume $\tr_r=\tr'_r=(d,d-1)$. Let $\sigma':=\sigma\cdot(d,d-1)=\tr_1\dots\tr_{r-1}=\tr'_1\dots\tr'_{r-1}$
and note that $d-1$ and $d$ belong to the same cycle of the cycle decomposition of $\sigma'$.
We claim that $\left<\tr_1,\dots,\tr_{r-1}\right>=\fS_d$. Suppose instead that
$\left<\tr_1,\dots,\tr_{r-1}\right>=\fS_{\fP_1}\times\dots\times\fS_{\fP_\ell}$, for a proper partition $(\fP_1,\dots,\fP_\ell)$ of $[d]$.
Then the elements $d-1$ and $d$ belong to the same cycle of $\sigma'$, which is entirely contained
in one piece of the partition: it follows that $\tr_1,\dots,\tr_r$ also generate
$\fS_{\fP_1}\times\dots\times\fP_\ell$, contradicting the assumption that they generate the entire $\fS_d$.

In a similar way $\left<\tr'_1,\dots,\tr'_{r-1}\right>=\fS_d$. We conclude by induction on $r$.
\end{proof}

\subsection{Consequences for \texorpdfstring{$\fS_d\geo$}{fSdgeo}}
Using the results of the previous subsection we can prove the following properties of $\fS_d\geo$.
\begin{lem}
\label{lem:fSgeococonnectedpairwisedetermined}
The PMQ $\fS_d\geo$ is coconnected and pairwise determined.
\end{lem}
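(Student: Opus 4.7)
The plan is to reduce both statements to the combinatorial results on minimal decompositions into transpositions established in the previous subsection, most crucially Proposition \ref{prop:Clebsch}. Maximal decomposability is immediate: the monotone decomposition $(\tr^\sigma_1,\dots,\tr^\sigma_{N(\sigma)})$ of Notation \ref{nota:monotonedec} exhibits every $\sigma\in\fS_d$ as a product of $N(\sigma)$ transpositions.

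For coconnectedness, I would fix $\sigma\in\fS_d$ with $N(\sigma)=r$ and analyse any minimal decomposition $(\tr_1,\dots,\tr_r)$. Applying Corollary \ref{cor:geodesicpair} to its partial products forces each $\tr_i$ to be supported inside a single non-trivial cycle of $\sigma$; a count using Lemma \ref{lem:formulaN} then shows that, for each non-trivial cycle $c$ of $\sigma$, the transpositions in the decomposition supported in $c$ form a minimal decomposition of $\sigma|_c\in\fS_c$ and generate the whole group $\fS_c$ (a graph on $|c|$ vertices with $|c|-1$ edges and no isolated vertex is a spanning tree). Since two transpositions supported on distinct cycles of $\sigma$ have disjoint supports and therefore commute, the standard move $(a,b)\mapsto(b,a^b)$ on such a pair reduces to a plain swap. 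Using such swaps I can sort any minimal decomposition of $\sigma$ cycle-by-cycle, with respect to a fixed ordering of the non-trivial cycles of $\sigma$; given a second minimal decomposition, I sort it in the same way and then apply Proposition \ref{prop:Clebsch} block-by-block within each $\fS_c$ to match the two sorted decompositions by standard moves.

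For pairwise determination, let $(\tr_1,\dots,\tr_r)$ with $r\ge 3$ be a sequence of transpositions whose product $\sigma=\tr_1\dots\tr_r$ satisfies $N(\sigma)<r$. Writing $\sigma_i:=\tr_1\dots\tr_i$, Lemma \ref{lem:geodesictransposition} gives $N(\sigma_i)-N(\sigma_{i-1})\in\set{\pm1}$, so there is a smallest index $j\ge 2$ with $N(\sigma_j)=N(\sigma_{j-1})-1$. If $j=2$ then $\tr_1=\tr_2$ and no moves are needed. Otherwise I choose any minimal decomposition $(\hat\tr_1,\dots,\hat\tr_{j-2})$ of $\sigma_j$, so that $(\hat\tr_1,\dots,\hat\tr_{j-2},\tr_j)$ is a minimal decomposition of $\sigma_{j-1}$; coconnectedness (already established) provides a sequence of standard moves transforming $(\tr_1,\dots,\tr_{j-1})$ into $(\hat\tr_1,\dots,\hat\tr_{j-2},\tr_j)$, which I apply to the first $j-1$ entries of the full sequence to produce a new sequence whose entries at positions $j-1$ and $j$ are both equal to $\tau:=\tr_j$. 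Finally, using $\tau^2=\one$, a consecutive pair $(\tau,\tau)$ can be shifted one position to the left via the two standard moves $(a,\tau,\tau)\mapsto(\tau,a^\tau,\tau)\mapsto(\tau,\tau,(a^\tau)^\tau)=(\tau,\tau,a)$; iterating brings the pair to positions $(1,2)$, yielding the desired conclusion.

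The main obstacle will be the cycle-by-cycle reduction in the coconnectedness step: I have to verify carefully that Corollary \ref{cor:geodesicpair} indeed forces each transposition of a minimal decomposition to be supported in a single cycle of $\sigma$, and that the resulting counts make Proposition \ref{prop:Clebsch} applicable within each $\fS_c$. Once this reduction is in place, pairwise determination follows essentially mechanically from coconnectedness and the involutive identity $\tau^2=\one$.
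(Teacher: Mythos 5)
Your proof is correct, but it takes a genuinely different route from the paper's on both halves. For coconnectedness the paper simply observes that the claim \emph{is} Lemma \ref{lem:weakClebsch}: every minimal decomposition of $\sigma$ into transpositions is connected by standard moves to the monotone decomposition of Notation \ref{nota:monotonedec}, hence any two are connected to each other. Your cycle-by-cycle reduction followed by a block-wise application of Proposition \ref{prop:Clebsch} proves the same statement with more work, though it has the virtue of making explicit that a minimal decomposition of $\sigma$ splits into independent minimal decompositions of its cycles. One caveat: your parenthetical justification that the transpositions supported in a cycle $c$ generate $\fS_c$ --- ``a graph on $|c|$ vertices with $|c|-1$ edges and no isolated vertex is a spanning tree'' --- is false as a graph-theoretic statement (a $4$-cycle plus a disjoint edge is a counterexample). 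The correct reason, which is the one the paper itself uses inside the proof of Proposition \ref{prop:Clebsch}, is that the ordered product of those transpositions equals $\sigma|_c$, which acts transitively on $c$, so the subgroup they generate cannot be a proper product $\fS_{\fP_1}\times\dots\times\fS_{\fP_m}$ with $m\ge 2$; connectedness together with the count $r_c=|c|-1$ from Lemma \ref{lem:formulaN} then gives a tree. For pairwise determination the paper runs an induction on $d$ that re-uses the procedure of Lemma \ref{lem:weakClebsch} and produces two equal adjacent transpositions at the \emph{end} of the sequence, without addressing how to move them to the front. Your argument --- locate via Lemma \ref{lem:geodesictransposition} the first index $j$ at which the partial norms drop, use the already-established coconnectedness to rewrite the minimal prefix $(\tr_1,\dots,\tr_{j-1})$ so that it ends in $\tr_j$, and then shift the resulting pair $(\tau,\tau)$ to positions $(1,2)$ using $\tau^2=\one$ --- is cleaner, avoids the induction on $d$ entirely, and actually lands the undefined product in the first two slots as Definition \ref{defn:pairwisedetermined} literally requires.
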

\begin{proof}
Being coconnected is the statement of Lemma \ref{lem:weakClebsch}.
To prove that $\fS_d$ is pairwise determined, we note that
a sequence of elements
$\tr_1,\dots,\tr_r\in(\fS_d\geo)_1$ does not admit a product in $\fS_d\geo$
precisely when the product permutation $\sigma:=\tr_1\dots\tr_r\in\fS_d$ satisfies $N(\sigma)<r$;
on the other hand a product of two transpositions $\tr$ and $\tr'$ is not defined in $\fS_d\geo$
if and only if $\tr=\tr'$.
The statement of the lemma is obvious for $d=2$, so we assume $d\ge3$ henceforth.

If no transposition $\tr_i$ has height $d$, the inductive hypothesis on $d-1$ ensures that there is a sequence
of standard moves transforming the sequence $(\tr_1,\dots,\tr_r)$ into a sequence ending with two equal transpositions. Otherwise assume that some $\tr_i$ has height $d$. We can
use on $(\tr_1,\dots,\tr_r)$ the procedure of Lemma \ref{lem:weakClebsch}, thus transforming
the sequence into a sequence $(\tr'_1,\dots,\tr'_r)$ with $\tr'_1,\dots,\tr'_i$ of height $<d$
and $\tr'_{i+1}=\dots=\tr'_r$ having height $d$, for some $1\le i\le r-1$. If $i\le r-2$ we have reached
a situation with two equal transpositions, so we can assume $i=r-1$.

In particular, if $i=r-1$, we have that $\vht(\sigma)=d$ and $\tr'_r=(d,\sigma^{-1}(d))=\tr_{N(\sigma)}^\sigma$.
It follows that $\sigma':=\sigma\cdot\tr'_r=\tr'_1\dots\tr'_r$ has norm $N(\sigma')=N(\sigma)-1$,
and $\sigma'$, as well as all transpositions $\tr'_1,\dots,\tr'_{r-1}$ can be regarded as permutations
in $\fS_{d-1}$. We conclude by inductive hypothesis on the sequence $\tr'_1,\dots,\tr'_{r-1}$ of elements of norm 1 in $\fS_{d-1}\geo$.
\end{proof}

We can in fact give a complete description of the completion $\widehat{\fS_d\geo}$ of $\fS_d\geo$.

\begin{prop}
 \label{prop:hatfSdgeo}
The complete PMQ $\widehat{\fS_d\geo}$ is the set of all sequences
\[
(\sigma;\fP_1,\dots,\fP_\ell;r_1,\dots,r_\ell),
\]
where $\sigma\in\fS_d$, $(\fP_1,\dots,\fP_\ell)$ is a partition of $[d]$ and $r_1,\dots,r_\ell\ge0$,
satisfying the following properties:
\begin{enumerate}
\item $\sigma\in\fS_{\fP_1}\times\dots\times\fS_{\fP_\ell}$;
\item $r_j\ge 2|\fP_j|-N(\sigma|_{\fP_j})-2$ for all $1\le j\le \ell$;
\item $r_j$ has the same parity as $N(\sigma|_{\fP_j})\in\fS_{\fP_j}$, for all $1\le j\le\ell$.
\end{enumerate}
Conjugation by $(\sigma;\fP_1,\dots,\fP_\ell;r_1,\dots,r_\ell)$ sends
\[
(\sigma';\fP'_1,\dots,\fP'_{\ell'};r'_1,\dots,r'_{\ell'})\mapsto
((\sigma')^{\sigma};\sigma^{-1}(\fP'_1),\dots,\sigma^{-1}(\fP'_{\ell'});r'_1,\dots,r'_{\ell'}).
\]

The product of $(\sigma;\fP_1,\dots,\fP_\ell;r_1,\dots,r_\ell)$ and 
$(\sigma';\fP'_1,\dots,\fP'_{\ell'};r'_1,\dots,r'_{\ell'})$ is the sequence
$(\sigma\sigma';\fP''_1,\dots,\fP''_{\ell''};r''_1,\dots,r''_{\ell''})$, where
$(\fP''_1,\dots,\fP''_{\ell''})$
is the
finest partition which is coarser than both $(\fP_1,\dots,\fP_\ell)$ and $(\fP'_1,\dots,\fP'_{\ell'})$,
and where, for all $1\le j\le \ell$,
\[
r''_i=\sum_{j\,\colon\, \fP_j\subset\fP''_i}r_j+\sum_{j'\,\colon\, \fP_{j'}\subset\fP''_i}r_{j'}.
\]
\end{prop}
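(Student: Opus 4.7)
The plan is to give an explicit combinatorial description of $\widehat{\fS_d\geo}$ by leveraging the coconnectedness of $\fS_d\geo$ (Lemma \ref{lem:fSgeococonnectedpairwisedetermined}). Lemma \ref{lem:coconnected} then identifies $\widehat{\fS_d\geo}$ with the completion of the sub-PMQ consisting of the identity together with all transpositions of $\fS_d$, equipped with trivial multiplication. Consequently, an element of $\widehat{\fS_d\geo}$ is represented by a finite sequence $(\tr_1,\dots,\tr_r)$ of transpositions, modulo the relation generated by the standard moves of Definition \ref{defn:standardmove}. I would then define a candidate map $\Psi$ sending such a sequence to the tuple $(\sigma;\fP_1,\dots,\fP_\ell;r_1,\dots,r_\ell)$, where $\sigma:=\tr_1\cdots\tr_r$, the non-singleton pieces of the partition are the connected components of the graph on $[d]$ with edges $\tr_1,\dots,\tr_r$ (singletons elsewhere), and $r_j$ counts the transpositions $\tr_i$ with support contained in $\fP_j$.

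Well-definedness of $\Psi$ on equivalence classes is immediate: the standard move $(\tr_i,\tr_{i+1})\mapsto(\tr_{i+1},\tr_i^{\tr_{i+1}})$ preserves the total product tautologically and preserves the support union $\mathrm{supp}(\tr_i)\cup\mathrm{supp}(\tr_{i+1})$, hence both the partition and the counts $r_j$. For injectivity I would proceed in two stages. First, transpositions supported in distinct pieces commute, so the corresponding standard moves (which are just swaps) let one rearrange any representative into the concatenation of its per-piece subsequences, in a prescribed piece ordering. Second, within each non-singleton piece $\fP_j$ the subsequence is a decomposition of $\sigma|_{\fP_j}$ into $r_j$ transpositions generating all of $\fS_{\fP_j}$; Proposition \ref{prop:Clebsch} applied inside $\fS_{\fP_j}$ then connects any two such decompositions by standard moves internal to $\fP_j$. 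Reassembling the per-piece moves proves injectivity.

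For surjectivity the key input is a Hurwitz-type existence lemma: given $\sigma\in\fS_n$ and $r\ge 2n-N(\sigma)-2$ with $r\equiv N(\sigma)\pmod 2$, there is a sequence of transpositions $(\tr_1,\dots,\tr_r)$ in $\fS_n$ generating $\fS_n$ and satisfying $\tr_1\cdots\tr_r=\sigma$. To build one I would start from the monotone decomposition of $\sigma$ into $N(\sigma)$ transpositions (Notation \ref{nota:monotonedec}), which by Lemma \ref{lem:formulaN} generates a product $\fS_{\fP'_1}\times\cdots\times\fS_{\fP'_k}$ with $k=n-N(\sigma)$ pieces, and then insert $k-1$ pairs $(\tr,\tr)$, each $\tr$ chosen to bridge two currently-disjoint pieces; this adds $2(k-1)$ transpositions, merges all pieces into $\fS_n$, and hits the lower bound since $N(\sigma)+2(k-1)=2n-N(\sigma)-2$. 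Any larger admissible $r$ is realised by inserting further pairs $(\tr,\tr)$ with $\tr$ an already-used transposition. Applying this lemma piece-by-piece and concatenating yields a preimage of any admissible tuple under $\Psi$, and the conjugation and product formulas of the proposition then follow by direct computation on sequences of transpositions.

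The main obstacle is the necessity of the bound $r\ge 2n-N(\sigma)-2$ in the Hurwitz existence lemma, an elementary combinatorial counterpart of the Riemann-Hurwitz inequality. I would establish it by a counting argument: reading the sequence left to right, each transposition is either a \emph{merger} (connecting vertices in distinct connected components of the partial graph, which by Lemma \ref{lem:geodesictransposition} necessarily reduces the cycle count of the partial product by $1$) or a \emph{non-merger} (whose endpoints already lie in the same component, changing the cycle count by $\pm 1$). Requiring the final graph to be connected and the final product to have $n-N(\sigma)$ cycles fixes the number of mergers to be $n-1$ and yields a linear system whose solution for the number of cycle-splitting non-mergers is $(r-2n+N(\sigma)+2)/2$; this must be non-negative, giving the bound and simultaneously matching the parity constraint.
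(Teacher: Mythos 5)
Your proof follows essentially the same route as the paper: reduce via coconnectedness to sequences of transpositions modulo standard moves, read off the product, the partition of $[d]$ determined by the generated Young subgroup, and the per-piece counts as invariants, use commutation across pieces plus Proposition \ref{prop:Clebsch} within each piece for injectivity, and realise any admissible tuple by padding the monotone decomposition with repeated pairs of transpositions. Your explicit merger/non-merger count establishing the necessity of the bound $r_j\ge 2|\fP_j|-N(\sigma|_{\fP_j})-2$ is a welcome supplement that the paper leaves implicit; the only slip is a label, since the quantity $(r-2n+N(\sigma)+2)/2$ whose non-negativity yields the bound counts the non-mergers that \emph{decrease} the cycle count of the partial product, not the cycle-splitting ones.
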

In the statement of Proposition \ref{prop:hatfSdgeo}, the partition $(\fP_1,\dots,\fP_\ell)$ is \emph{unordered}, as well as the sequence of numbers $r_1,\dots,r_\ell$; still each piece $\fP_j$ of the partition
is associated with its corresponding number $r_j$.
\begin{proof}[Proof of Proposition \ref{prop:hatfSdgeo}]
By Lemma \ref{lem:fSgeococonnectedpairwisedetermined} the PMQ $\fS_d\geo$ is coconnected,
hence its completion $\widehat{\fS_d\geo}$ can be computed as the completion of $(\fS_d\geo)_1$:
thus an element of $\widehat{\fS_d\geo}$ is an equivalence class of sequences $(\tr_1,\dots,\tr_r)$
of elements in $(\fS_d\geo)_1$, where two sequences are equivalent if they can be transformed
into another by standard moves.
With a sequence $(\tr_1,\dots,\tr_r)$ we can associate the following, which are invariants under standard moves:
\begin{itemize}
 \item the product $\sigma:=\tr_1\dots\tr_r\in\fS_d$;
 \item the unordered partition $(\fP_1,\dots,\fP_\ell)$ of $[d]$, with $\left<\tr_1,\dots,\tr_r\right>=\fS_{\fP_1}\times\dots\times\fS_{\fP_\ell}$;
 \item for each partition piece $\fP_j$, the number $r_j$ of transpositions $\tr_i$ 
 belonging to the symmetric group $\fS_{\fP_j}$.
\end{itemize}
Properties (1)-(3) in the statement of the Proposition ensure that, vice versa, a sequence
$(\sigma;\fP_1,\dots,\fP_\ell;r_1,\dots,r_\ell)$ can be achieved from a suitable sequence
of transpositions $(\tr_1,\dots,\tr_r)$. Indeed, if (1)-(3) are satisfied,
one can let $r:=r_1+\dots+r_\ell$ and define $(\tr_1,\dots,\tr_r)$ as the concatenation of $(\tr_1^\sigma,\dots,\tr_{N(\sigma)}^\sigma)$ with a choice of $(r-N(\sigma))/2$ pairs of equal transpositions $(\tr,\tr)$,
chosen in such a way that, for all $1\le j\le \ell$, there are precisely $r_j-N(\sigma|_{\fP_j})/2$
pairs with $\tr\in\fS_{\fP_j}$; since $r_j-N(\sigma|_{\fP_j})/2\ge|\fP_j|-N(\sigma|_{\fP_j})-1$,
and since the last expression is 1 less than the number of cycles of $\sigma|_{\fP_j}$,
we can also ensure that $\tr_1,\dots,\tr_r$ generate precisely the subgroup $\fS_{\fP_1}\times\dots\times\fS_{\fP_r}$.

Vice versa, if two sequences of transpositions $(\tr_1,\dots,\tr_r)$ and $(\tr'_1,\dots,\tr'_{r'})$ give rise to the same sequence
$(\sigma;\fP_1,\dots,\fP_\ell;r_1,\dots,r_\ell)$, then $r=\sum_{j=1}^\ell r_j=r'$; since
transpositions lying in two different factors $\fS_{\fP_j}$ and $\fS_{\fP_{j'}}$ commute,
up to operating suitable sequences of standard moves we can assume that both 
$(\tr_1,\dots,\tr_r)$ and $(\tr'_1,\dots,\tr'_{r'})$ are concatenations of smaller sequences
in the following way:
\begin{itemize}
 \item for all $1\le j\le \ell$ there is a sequence of transpositions $(\tr_{j,1},\dots,\tr_{j,r_j})$
 in $\fS_{\fP_j}$, and $(\tr_1,\dots,\tr_r)$ is the concatenation of
 $(\tr_{1,1},\dots,\tr_{1,r_1}),\dots,(\tr_{\ell,1},\dots,\tr_{\ell,r_\ell})$;
 \item for all $1\le j\le \ell$ there is a sequence of transpositions $(\tr'_{j,1},\dots,\tr'_{j,r_j})$
 in $\fS_{\fP_j}$, and $(\tr'_1,\dots,\tr'_r)$ is the concatenation of
 $(\tr'_{1,1},\dots,\tr'_{1,r_1}),\dots,(\tr'_{\ell,1},\dots,\tr'_{\ell,r_\ell})$.
\end{itemize}
We can then apply Proposition \ref{prop:Clebsch} to each of the $\ell$ pairs of corresponding
sequences $(\tr_{j,1},\dots,\tr_{j,r_j})$ and $(\tr'_{j,1},\dots,\tr'_{j,r_j})$,
showing that these sequences of transpositions are connected by a sequence of standard moves.
Concatenating, we obtain that also $(\tr_1,\dots,\tr_r)$ and $(\tr'_1,\dots,\tr'_r)$
are connected by a sequence of standard moves.

In this way we have shown that $\fS_d\geo$ is in bijection with the set of sequences
$(\sigma;\fP_1,\dots,\fP_\ell;r_1,\dots,r_\ell)$ satisfying properties (1)-(3).
The description of conjugation and product in light of this bijection is straightforward.
\end{proof}

Let $R$ be a commutative ring.
It follows from Theorem \ref{thm:RQquadratic} that the PMQ-ring $R[\fS_d\geo]$ is isomorphic to
the free associative $R$-algebra with the following generators and relations:

\begin{tabular}{ll}
 \textbf{Generators} & For all $x<y\in[d]$ there is a generator $\sca{xy}=\sca{yx}$.\\
 \textbf{Relations} & $\sca{xy}^2=0$ for all distinct $x,y\in[d]$.\\
  & $\sca{xy}\sca{yz}=\sca{yz}\sca{zx}=\sca{zx}\sca{xy}$ for all distinct $x,y,z\in[d]$.\\
  & $\sca{xy}\sca{zw}=\sca{zw}\sca{xy}$ for all distinct $x,y,z,w\in[d]$.
\end{tabular}

The following Proposition is a reformulation of a Theorem of Visy \cite{Visy}. The proof is taken from
\cite[page 108]{BianchiPhD}.

\begin{prop}
 \label{prop:fSgeoKoszul}
Let $d\ge2$; then the normed PMQ $\fS_d\geo$ is Koszul over any commutative ring $R$.
\end{prop}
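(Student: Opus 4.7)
The plan is to apply Priddy's PBW criterion, which reduces Koszulness of a quadratic algebra to the existence of a compatible monomial basis arising from a confluent quadratic rewriting system (equivalently, a quadratic Gröbner basis for the ideal of relations). Since the PBW property is preserved under arbitrary base change from $\Z$ to $R$, it suffices to work over $\Z$ and then tensor.

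First I would fix a total order on the degree-$1$ generators $\sca{xy}$ (with $x<y$) by declaring $\sca{xy}<\sca{x'y'}$ whenever $y<y'$, or $y=y'$ and $x<x'$; I then extend to the deg-lex order on the free associative algebra generated by these letters. The expected PBW basis, indexed by $\sigma\in\fS_d$, consists of the monomials $\sca{\tr_1^\sigma}\sca{\tr_2^\sigma}\cdots\sca{\tr_{N(\sigma)}^\sigma}$ obtained from the monotone decomposition of $\sigma$ (Notation \ref{nota:monotonedec}). Since the monotone decomposition is unique and has $N(\sigma)$ factors, these monomials are in bijection with the $R$-basis $\{\sca\sigma\}_{\sigma\in\fS_d}$ of $R[\fS_d\geo]$, and each represents the $<$-minimal rewriting of $\sca\sigma$.

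Next I would orient each quadratic defining relation so that it rewrites its $<$-maximal monomial in terms of strictly smaller monomials, giving a rewriting system on quadratic monomials. The core task is to verify confluence at length three: every overlap of two leading monomials must admit a common reduct. The combinatorics needed for this is exactly what Section~\ref{sec:fSgeo} already provides; in particular, the algorithmic content of Lemma~\ref{lem:weakClebsch} (transforming any minimal decomposition into the monotone one via standard moves), combined with Lemmas~\ref{lem:anyjJattheend} and~\ref{lem:onlyhtsigmaJatend}, essentially amounts to the claim that any two sequences of allowed rewrites applied to a length-three monomial reach the same monotone normal form. Translating these lemmas into statements about the chosen order and chosen orientations of the relations will complete the confluence check.

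Once confluence is established, Priddy's theorem gives that $\Z[\fS_d\geo]$ is Koszul, and then $R[\fS_d\geo]=R\otimes_\Z\Z[\fS_d\geo]$ inherits Koszulness for any commutative ring $R$ since the monomials $\sca{\tr_1^\sigma}\cdots\sca{\tr_{N(\sigma)}^\sigma}$ remain a free $R$-basis after base change. The main obstacle is the case-by-case bookkeeping in the confluence verification: although each individual critical pair is elementary (involving at most three transpositions, so at most four indices in $[d]$), it must be resolved using only the three families of defining relations, and some pairs require nontrivial chains of rewrites mimicking the inductive step in Lemma~\ref{lem:onlyhtsigmaJatend} where one creates a cancelling pair of transpositions in order to lower the height. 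A more conceptual route, if the bookkeeping proves cumbersome, is to invoke the theorem of Visy directly and then transfer Koszulness to $R$ by flat base change.
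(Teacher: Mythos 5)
Your proposal is correct and follows the same overall strategy as the paper: Priddy's PBW criterion, the identical total order on the generators $\sca{xy}$ (larger index first, then smaller index), and the same candidate basis of monomials $\sca{\tr_1^\sigma}\cdots\sca{\tr_{N(\sigma)}^\sigma}$ coming from monotone decompositions. The genuine difference lies in how the PBW property is certified. You propose to orient the relations and verify confluence of all length-three overlaps — a quadratic Gr\"obner-basis computation — and you rightly flag this case-by-case bookkeeping as the main obstacle. The paper sidesteps it entirely: the admissible quadratic monomials are characterised as those $\sca{\tr}\sca{\tr'}$ with $\vht(\tr)<\vht(\tr')$ (a short check against the three families of relations), so the PBW monomials are exactly the height-increasing words, which by uniqueness of the monotone decomposition are in bijection with the set $\{\sca{\sigma}\}_{\sigma\in\fS_d}$; since $R[\fS_d\geo]$ is by construction a free $R$-module on this set, linear independence of the PBW monomials is automatic and no critical-pair analysis is needed. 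In particular, Lemmas \ref{lem:weakClebsch}, \ref{lem:anyjJattheend} and \ref{lem:onlyhtsigmaJatend}, which you planned to translate into confluence statements, play no role in the paper's proof of this proposition (they are used for coconnectedness and for computing $\widehat{\fS_d\geo}$). The dimension-count argument also works verbatim over any commutative ring, so your base-change step from $\Z$ to $R$ is unnecessary; and your fallback of citing Visy directly is precisely how the paper frames the result, namely as a reformulation of Visy's theorem.
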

\begin{proof}

We will prove that $R[\fS_d\geo]$ admits a Poincar\'{e}-Birkhoff-Witt (PBW) basis: a result by Priddy
\cite{Priddy} then ensures that $R[\fS_d\geo]$ is Koszul; see also Theorem 3.1 in
\cite[Chapter 4]{PP:QuadraticAlgebras}.

We give a total order $\prec$ on the generators of $R[\fS_d\geo]$: let $\sca{xy}$ and $\sca{x'y'}$
be two different generators, with $x<y$ and $x'<y'$; then $\sca{xy}\prec\sca{x'y'}$ if and only if
$y<y'$, or $y=y'$ and $x<x'$.
We give the the lexicographic order, also denoted by $\prec$,
to the set $\set{\pa{\sca{xy},\sca{x'y'}}}$ of
pairs of generators.

For a transposition $\tr=(x,y)\in\fS_d$ we denote by $\sca{xy}$
the corresponding generator of $R[\fS_d\geo]$. More generally,
for $\sigma\in\fS_d$ we denote by $\sca{\sigma}\in R[\fS_d\geo]$
the generator corresponding to $\sigma$; recall that
the elements $\sca{\sigma}$, for varying
$\sigma\in\fS_d$, form a basis of $R[\fS_d\geo]$ as a free $R$-module.

Define $\mathcal{S}\subset\set{\pa{\sca{xy},\sca{x'y'}}}$ as the subset containing pairs
$\pa{\sca{\tr},\sca{\tr'}}$ such that the product $\sca{\tr}\sca{\tr'}\in R[\fS_d\geo]$
cannot be expressed as a linear combination of the form $\sum_{i=1}^m\lambda_i\sca{\tr_i}\sca{\tr'_i}$,
with $\pa{\sca{\tr_i},\sca{\tr'_i}}\prec\pa{\sca{\tr},\sca{\tr'}}$ for all $i$. A PBW-monomial
in the generators $\sca{xy}$ is then a monomial $\sca{\tr_1}\cdot\ldots\cdot\sca{\tr_p}$
with $\pa{\sca{\tr_i},\sca{\tr_{i+1}}}\in \mathcal{S}$ for all $1\leq i\leq p-1$, and our
aim is to prove that PBW-monomials form a basis of $R[\fS_d\geo]$ as a free $R$-module, called
PBW-basis.

By the relations in the presentation of $R[\fS_d\geo]$ it is straightforward to see
that $\pa{\sca{\tr},\sca{\tr'}}\in \mathcal{S}$ if and only if $\vht(\tr)<\vht(\tr')$.

Recall that every permutation $\sigma\in\fS_d$
has a unique monotone decomposition $\tr_1^\sigma \dots\tr_{N(\sigma)}^\sigma$
with $\vht(\tr_1)<\dots <\vht(\tr_{N(\sigma)})$; vice versa
every product $\tr_1\cdot\dots\tr_p$, with $\vht(\tr_1)<\dots <\vht(\tr_p)$ gives a permutation
$\sigma\in\fS_d$ of norm $p$.

This shows that PBW-monomials form precisely the standard basis of elements $\sca{\sigma}$ of
$R[\fS_d\geo]$.
\end{proof}

\appendix
\section{A brief discussion on partially multiplicative racks}
\label{sec:racks}
A \emph{rack} with unit is as a
set $\fR$ with a marked element $\one\in\fR$, called unit, and a binary operation $\fR\times\fR\to\fR$,
called conjugation and denoted $(a,b)\mapsto a^b$, such that all properties of Definition \ref{defn:quandle}
are satisfied, except possibly property (3). A morphism of racks is required to preserve
the unit and the conjugation.

A partially multiplicative rack (PMR) is then a set $\fR$ with compatible structures
of partial monoid and rack, i.e. satisfying all properties of Definition \ref{defn:PMQ}
with the word ``quandle'' replaced by the word ``rack''. PMRs form a category
$\PMR$, with morphisms those maps of sets that preserve unit, conjugation and partial
multiplication.

All definitions from Subsection \ref{subsec:basicsPMQ} extend naturally to the setting of racks.
We have a fully faithful inclusion of categories $\PMQ\subset\PMR$; the next example
shows that it is not an equivalence.
\begin{ex}
 Let $\fR=\set{\one,a,b}$ be a rack with conjugation given by $a^b=a^a=b$ and $b^b=b^a=a$.
 Then $\fR$ is not a quandle. If we consider $\fR$ as a PMR with trivial multiplication,
 we obtain an example of a PMR which is not a PMQ.
\end{ex}

In fact, all definitions and results in Sections \ref{sec:pmqbasic}, \ref{sec:normedPMQ}, \ref{sec:barconstructions} and \ref{sec:simplhur} can be generalised to the context of
PMRs. In particular:
\begin{itemize}
 \item every PMR $\fR$ can be completed to a \emph{complete PMR} (or multiplicative rack) $\hfR$;
 \item for every complete PMR $\hfR$ and every category $\bfA$ we have a category $\XA(\hfR)$ of $\hfR$-crossed object in $\bfA$, which is a functor $\hfR\borel\hfR\to\bfA$, where $\hfR\borel\hfR$ is defined precisely
as in Definition \ref{defn:hQborel}, replacing $\hQ$ by $\hfR$;
 \item if $\bfA$ is (braided) monoidal, then $\XA(\hfR)$ is also (braided) monoidal;
 \item for an augmented PMR $\fR$ we can define a simplicial Hurwitz space $\Hur^{\Delta}(\fR)$,
 arising as the difference of the geometric realisations of the bisimplicial sets
 $\Arr(\fR)$ and $\NAdm(\fR)$, which are defined precisely as in Definitions \ref{defn:ArrQ}
 and \ref{defn:NAdm};
 \item for an augmented PMR $\fR$ the double bar construction $B_{\bullet,\bullet}(R[\fR],R,\epsilon)$
 gives rise, after passing to the associated total chain complex, to the cellular chain complex
 of the pair $(|\Arr(\fR)|,|\NAdm(\fR)|)$.
\end{itemize}

The only results which do not admit a direct generalisation to PMRs are those in Section \ref{sec:freegroups}, in particular Theorem \ref{thm:FQfreePMQ}. In the following we discuss what
complications arise.
First, we note that the fully faithful inclusion of categories $\PMQ\subset\PMR$ has a right adjoint.
\begin{defn}
 \label{defn:RPMQ}
 Let $\fR$ be a PMR. An element $a\in\fR$ is \emph{quandle-like} if $a^a=a$.
 We denote by $\fR^{\mathrm{PMQ}}\subset\fR$ the subset of quandle-like elements.
\end{defn}
\begin{lem}
\label{lem:rightadjoint}
 Let $\fR$ be a PMR. Then $\fR^{\mathrm{PMQ}}$ inherits from $\fR$ a structure of PMQ, such that
 the inclusion $\fR^{\mathrm{PMQ}}\subset\fR$ is a map of PMRs. Moreover for any PMQ $\Q$,
 any map of PMRs $\psi\colon\Q\to\fR$ has image contained in $\fR^{\mathrm{PMQ}}$.
\end{lem}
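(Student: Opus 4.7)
The lemma splits into two assertions. The first is that $\fR^{\mathrm{PMQ}}$, equipped with the unit, conjugation, and partial product inherited from $\fR$, is a sub-PMR of $\fR$, and is automatically a PMQ by definition of the subset. Since all PMR axioms other than $a^a = a$ already hold throughout $\fR$, what must be verified is: $\one\in \fR^{\mathrm{PMQ}}$ (immediate from $\one^\one = \one$, a consequence of the unit axioms); closure of $\fR^{\mathrm{PMQ}}$ under conjugation by every $b\in\fR^{\mathrm{PMQ}}$ (including inverse conjugation, to get a bijection); and closure under the partial product. The second assertion will fall out essentially for free once the first is in place.

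To establish closure under conjugation, I would apply the rack identity $(x^y)^z = (x^z)^{(y^z)}$ of Definition \ref{defn:quandle}(4) with $(x,y,z) = (a,a,b)$, obtaining $(a^a)^b = (a^b)^{(a^b)}$; for $a\in\fR^{\mathrm{PMQ}}$ this collapses to $a^b = (a^b)^{(a^b)}$, showing $a^b\in\fR^{\mathrm{PMQ}}$ irrespective of whether $b\in\fR^{\mathrm{PMQ}}$. To see that $(-)^b\colon\fR\to\fR$ restricts to a bijection of $\fR^{\mathrm{PMQ}}$ when $b\in\fR^{\mathrm{PMQ}}$, I would set $c = a^{b^{-1}}$ and use the same identity, combined with injectivity of $(-)^b$, to transfer quandle-likeness from $a$ back to $c$.

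The main computational step is closure under the partial product; this is where I expect the slight bookkeeping burden to lie, though it is not a genuine obstacle. Assuming $a,b\in\fR^{\mathrm{PMQ}}$ and $ab$ defined, I would expand $(ab)^{ab}$ in two ways. On one side, using in order $(xy)^c = x^c y^c$, $x^{yz} = (x^y)^z$, the rack identity, and $a^a=a$, $b^b=b$,
\[
(ab)^{ab} \;=\; a^{ab}\cdot b^{ab} \;=\; (a^a)^b\cdot(b^a)^b \;=\; a^b\cdot (b^b)^{a^b} \;=\; a^b\cdot b^{a^b}.
\]
On the other side, applying the PMR axiom $xy = yx^y$ of Definition \ref{defn:PMQ} twice (first to the pair $(a,b)$, then to the pair $(b,a^b)$) yields $ab = ba^b = a^b\cdot b^{a^b}$. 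Comparing the two, $(ab)^{ab} = ab$, so $ab\in\fR^{\mathrm{PMQ}}$.

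Finally, for the second assertion, let $\psi\colon\Q\to\fR$ be a map of PMRs with $\Q$ a PMQ. For every $a\in\Q$ we have $a^a = a$, hence $\psi(a)^{\psi(a)} = \psi(a^a) = \psi(a)$, so $\psi(a)\in\fR^{\mathrm{PMQ}}$, as required. Putting this together with the first part realises $(-)^{\mathrm{PMQ}}\colon \PMR\to\PMQ$ as the right adjoint to the inclusion $\PMQ\hookrightarrow\PMR$.
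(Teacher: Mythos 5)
Your proposal is correct and follows essentially the same route as the paper's proof: the same rack-axiom computation $(a^b)^{a^b}=(a^a)^b$ for closure under conjugation, the same two-sided expansion of $(ab)^{ab}$ via $(ab)^c=a^cb^c$, $x^{yz}=(x^y)^z$, the rack identity, and $ab=ba^b=a^b b^{a^b}$ for closure under the product, and the same one-line argument for the second assertion. The only additions are minor explicit checks (bijectivity of the restricted conjugation maps, and the observation that quandle-likeness of $a^b$ needs no hypothesis on $b$) that the paper leaves implicit.
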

\begin{proof}
 It is clear that $\one$ is quandle-like, so it belongs to $\fR^{\mathrm{PMQ}}$ and provides the unit.
 For all quandle-like $a,b$ we have to check the following:
 \begin{itemize}
  \item $a^b$ is quandle-like: indeed $(a^b)^{a^b}=(a^a)^b$ because $\fR$ is a PMR, and $(a^a)^b=a^b$ since $a$ is quandle-like;
  \item if $ab$ is defined in $\fR$, then it is quandle-like: indeed $(ab)^{ab}=a^{ab}b^{ab}=(a^a)^b(b^a)^b=(a^a)^b(b^b)^{a^b}$ because $\fR$ is a PMR, the last expression is equal
  to $a^b b^{a^b}$ because $a$ and $b$ are quandle-like, and $a^b b^{a^b}=ba^b=ab$ because $\fR$
  is a PMR.
 \end{itemize}
 This shows that $\fR^{\mathrm{PMQ}}$ is a PMQ, and inclusion $\fR^{\mathrm{PMQ}}\subset\fR$ is a map of PMRs.
 If $\Q$ is any PMQ and $\psi\colon\Q\to\fR$ is a map of PMRs, then the equality $a^a=a$
 for $a\in\Q$ implies the equality $\psi(a)^{\psi(a)}=\psi(a)$ in $\fR$, hence $\psi(a)\in\fR^{\mathrm{PMQ}}$.
\end{proof}
The previous lemma shows that the assignment $\fR\mapsto \fR^{\mathrm{PMQ}}$ gives the right
adjoint $(-)^{\mathrm{PMQ}}\colon\PMR\to\PMQ$ to the inclusion $\PMQ\subset\PMR$.

In \cite{Bianchi:Hur2} we will introduce a ``coordinate-free'' definition of Hurwitz spaces:
the definition is quite general, and it includes, for each PMQ $\Q$, the definition
of a space $\Hur((0,1)^2;\Q)$, containing configurations $P\subset(0,1)$ with the additional
datum of a monodromy $\psi$, defined on certain loops of $\CmP$ and with values in $\Q$.

More precisely, a configuration in  $\Hur((0,1)^2;\Q_+)$ takes the form of a pair $(P,\psi)$,
where $P\subset(0,1)^2$ is a finite subset, and $\psi\colon\fQ(P)\to\Q$ is a map of PMQs.
Here we define $\fQ(P)\subset\pi_1(\CmP,*)$
to be the union of $\set{\one}$ and all conjugacy classes corresponding to simple closed
curves in $\CmP$ spinning clockwise around precisely one point of $P$. The set $\fQ(P)$
is a PMQ with trivial multiplication. As in Subsection
\ref{subsec:simplhur}, we let  $*=-\sqrt{-1}\in\CmP$ be the basepoint for the fundamental group of $\CmP$.

Suppose now that $\fR$ is any PMR, and let us define
the set $\Hur((0,1)^2;\fR)$, in the most straightforward way,
as the set of pairs $(P,\psi)$, where
$P\subset(0,1)^2$ is a finite subset and
$\psi\colon\fQ(P)\to\fR$ is a morphism of PMRs; then by Lemma \ref{lem:rightadjoint}
the image of $\psi$ is contained in $\fR^{\mathrm{PMQ}}$, and in fact the set
$\Hur((0,1)^2;\fR)$ is in bijection with the set  $\Hur((0,1)^2;\fR^{\mathrm{PMQ}})$.
This means that only the quandle-like part of $\fR$ is involved in the definition of
the set $\Hur((0,1)^2;\fR)$, or, in other words, the definition of the set
$\Hur((0,1)^2;\fR)$ is only interesting when $\fR$ is a PMQ.

\bibliography{Bibliography1.bib}{}

\begin{thebibliography}{EVW16}

\bibitem[Arm09]{Armstrong}
Drew Armstrong.
\newblock Generalized noncrossing partitions and combinatorics of {C}oxeter
  groups.
\newblock {\em Memoirs of the American Mathematical Society}, 202(949):x+159,
  2009.

\bibitem[Bia20]{BianchiPhD}
Andrea Bianchi.
\newblock {\em Moduli spaces of branched coverings of the plane}.
\newblock PhD thesis, Universit\"{a}t Bonn, 2020.
\newblock \url{https://bonndoc.ulb.uni-bonn.de/xmlui/handle/20.500.11811/8434}.

\bibitem[Bia23a]{Bianchi:Hur2}
Andrea Bianchi.
\newblock {H}urwitz-{R}an spaces.
\newblock {\em Geometriae Dedicata}, 217:84, 2023.

\bibitem[Bia23b]{Bianchi:Hur4}
Andrea Bianchi.
\newblock Moduli spaces of {R}iemann surfaces as {H}urwitz spaces.
\newblock {\em Advances in Mathematics}, 430:109217, 2023.

\bibitem[Bia24]{Bianchi:Hur3}
Andrea Bianchi.
\newblock Deloopings of {H}urwitz spaces.
\newblock {\em Compositio Mathematica}, 160:1651--1714, 2024.

\bibitem[B{\"o}d87]{Boedigheimer87}
Carl-Friedrich B{\"o}digheimer.
\newblock Stable splittings of mapping spaces.
\newblock {\em Algebraic Topology}, pages 174--187, 1987.

\bibitem[Cle72]{Clebsch}
Alfred Clebsch.
\newblock Zur {T}heorie der {R}iemann’schen {F}l\"{a}chen.
\newblock {\em Mathematische Annalen}, 6:216--230, 1872.

\bibitem[Dri87]{Drinfeld86}
Vladimir~G. Drinfeld.
\newblock Quantum groups.
\newblock Proceedings of the ICM, Berkeley 1986, pages 789--820, 1987.

\bibitem[DRS23]{DehnQuandles}
{Neeraj K.} Dhanwani, Hitesh Raundal, and Mahender Singh.
\newblock Dehn quandles of groups and orientable surfaces.
\newblock {\em Fundamenta Mathematicae}, 263:167--201, 2023.

\bibitem[DT56]{DT56}
Albrecht Dold and Ren{\'e} Thom.
\newblock Une g{\'e}n{\'e}ralisation de la notion d’espace fibr{\'e}.
  applications aux produits sym{\'e}triques infinis.
\newblock {\em Les Comptes rendus de l'Acad{\'e}mie des sciences},
  242:1680--1682, 1956.

\bibitem[ETW17]{ETW:shufflealgebras}
Jordan Ellenberg, TriThang Tran, and Craig Westerland.
\newblock Fox-{N}euwirth-{F}uks cells, quantum shuffle algebras, and {M}alle's
  conjecture for function fields.
\newblock \href{https://arxiv.org/abs/1701.04541}{arXiv:1701.04541}, 2017.

\bibitem[EVW12]{EVW:homstabhurII}
Jordan Ellenberg, Akshay Venkatesh, and Craig Westerland.
\newblock Homological stability for {H}urwitz spaces and the {C}ohen-{L}enstra
  conjecture over function fields, {II}, 2012.
\newblock \href{https://arxiv.org/abs/1212.0923}{arXiv:1212.0923}.

\bibitem[EVW16]{EVW:homstabhur}
Jordan Ellenberg, Akshay Venkatesh, and Craig Westerland.
\newblock Homological stability for {H}urwitz spaces and the {C}ohen-{L}enstra
  conjecture over function fields.
\newblock {\em Annals of Mathematics}, 183(3):729--786, 2016.

\bibitem[FN62]{fn}
Edward Fadell and Lee Neuwirth.
\newblock Configuration spaces.
\newblock {\em Mathematica scandinavica}, 10:111–118, 1962.

\bibitem[FR92]{FennRourke}
Roger Fenn and Colin Rourke.
\newblock Racks and links in codimension two.
\newblock {\em Journal of Knot Theory and Its Ramifications}, 1(4):343--406,
  1992.

\bibitem[Fuc70]{Fuchs}
Dmitry~Borisovich Fuchs.
\newblock Cohomologies of the braid group mod 2.
\newblock {\em Functional Analysis and its Applications}, 4:2:143--151, 1970.

\bibitem[Ful69]{Fulton}
William Fulton.
\newblock Hurwitz schemes and the irreducibility of the moduli of algebraic
  curves.
\newblock {\em Annals of Mathematics}, 90, 1969.

\bibitem[FV91]{FriedVolklein}
{Michael D.} Fried and Helmut V\"olklein.
\newblock The inverse {G}alois problem and rational points on moduli spaces.
\newblock {\em Mathematische Annalen}, 290:771--800, 1991.

\bibitem[FY89]{FreydYetter}
Peter{ }J. Freyd and David{ }N. Yetter.
\newblock Braided compact closed categories with applications to
  low-dimensional topology.
\newblock {\em Advances in Mathematics}, 77(2):156--182, 1989.

\bibitem[Hur91]{Hurwitz}
Adolf Hurwitz.
\newblock \"{U}ber {R}iemannsche {F}l{\"a}chen mit gegebenen
  {V}erzweigungspunkten.
\newblock {\em Mathematische Annalen}, 39(1):1--61, 1891.

\bibitem[Joy82]{Joyce}
David Joyce.
\newblock A classifying invariant of knots, the knot quandle.
\newblock {\em Journal of Pure and Applied Algebra}, 23:37--65, 1982.

\bibitem[Kal01]{Kallel01}
Sadok Kallel.
\newblock Spaces of particles on manifolds and generalized {P}oincar\'e
  dualities.
\newblock {\em The Quarterly Journal of Mathematics}, 52:45--70, 2001.

\bibitem[LS21]{FreeRacks}
Tyler Lawson and Markus Szymik.
\newblock The homotopy types of free racks and quandles.
\newblock \href{https://arxiv.org/abs/2106.01299}{arXiv:2106.01299}, 2021.

\bibitem[Maj91]{Majid91}
Shahn Majid.
\newblock Doubles of quasitriangular {H}opf algebras.
\newblock {\em Communications in Algebra}, 19:3061--3073, 1991.

\bibitem[May72]{May}
J.{ }Peter May.
\newblock {\em The Geometry of iterated loop spaces}, volume 271 of {\em
  Lecture notes in Mathematics}.
\newblock Springer Verlag, 1972.

\bibitem[McC69]{McCord69}
Michael~C. McCord.
\newblock Classifying spaces and infinite symmetric products.
\newblock {\em Transactions of the American Mathematical Society},
  146:273--298, 1969.

\bibitem[McD75]{McDuff75}
Dusa McDuff.
\newblock Configuration spaces of positive and negative particles.
\newblock {\em Topology}, 14:91--107, 1975.

\bibitem[PP05]{PP:QuadraticAlgebras}
Alexander Polishchuk and Leonid Positselski.
\newblock {\em Quadratic Algebras}, volume~37 of {\em University Lecture
  Series}.
\newblock American Mathematical Society, 2005.

\bibitem[Pri70]{Priddy}
Stewart Priddy.
\newblock Koszul resolutions.
\newblock {\em Transactions of the American Mathematical Society}, 152:39--60,
  1970.

\bibitem[PT21]{PalmerTillmann}
Martin Palmer and Ulrike Tillmann.
\newblock Configuration-mapping spaces and homology stability.
\newblock {\em Research in the Mathematical Sciences}, 8, 09 2021.

\bibitem[PT22]{PalmerTillmannbis}
Martin Palmer and Ulrike Tillmann.
\newblock Point-pushing actions for manifolds with boundary.
\newblock {\em Groups, Geometry, and Dynamics}, 16(4):1179--1224, 2022.

\bibitem[RW06]{RomagnyWewers}
Matthieu Romagny and Stefan Wewers.
\newblock Hurwitz spaces.
\newblock {\em S\'eminaires et Congr\`es}, 13:313--341, 2006.

\bibitem[RW19]{ORW:Hurwitz}
Oscar Randal-Williams.
\newblock Homology of {H}urwitz spaces and the {C}ohen-{L}enstra heuristic for
  function fields (after {E}llenberg, {V}enkatesh, and {W}esterland).
\newblock 2019.

\bibitem[Sal01]{Salvatore01}
Paolo Salvatore.
\newblock Configuration spaces with summable labels.
\newblock {\em Progress in Mathematics}, 196:375--396, 2001.

\bibitem[Seg73]{Segal73}
Graeme Segal.
\newblock Configuration spaces and iterated loop spaces.
\newblock {\em Inventiones Mathematicae}, 21:213–221, 1973.

\bibitem[Sna74]{Snaith}
Victor{ }P. Snaith.
\newblock A stable decomposition of {$\Omega^n S^n X$}.
\newblock {\em Journal of the London Mathematical Society}, 7:577--583, 1974.

\bibitem[Vis11]{Visy}
Bal\'{a}zs Visy.
\newblock {\em Factorable Groups and their Homology}.
\newblock PhD thesis, Universit\"{a}t Bonn, 2011.
\newblock \url{https://bonndoc.ulb.uni-bonn.de/xmlui/handle/20.500.11811/4990}.

\bibitem[Yet90]{Yetter90}
David~N. Yetter.
\newblock Quantum groups and representations of monoidal categories.
\newblock {\em Mathematical Proceedings of the Cambridge Philosophical
  Society}, 108:261--290, 1990.

\end{thebibliography}
\bibliographystyle{alpha}

\end{document}